\documentclass[10pt,leqno]{article}

\usepackage{amsmath,amssymb,amsthm,mathrsfs,dsfont}

\usepackage[margin=3cm]{geometry} 

\usepackage{titlesec,hyperref}

\usepackage{color}

\usepackage{fancyhdr}
\pagestyle{fancy}
\lhead{}


\linespread{1.2}

\titleformat{\subsection}{\it}{\thesubsection.\enspace}{1pt}{}

\newtheorem{theo}{Theorem}[section]
\newtheorem{lemm}[theo]{Lemma}
\newtheorem{defi}[theo]{Definition}

\newtheorem{prop}[theo]{Proposition}

\numberwithin{equation}{section}

\allowdisplaybreaks 



\begin{document}
\title{The local well-posedness, global existence and ill-posedness for the fifth order Camassa-Holm model
	\hspace{-4mm}
}

\author{Xiaoxin $\mbox{Chen}^1$ \footnote{E-mail: chenxx233@mail2.sysu.edu.cn},\quad
	Zhaoyang $\mbox{Yin}^{1}$\footnote{E-mail: mcsyzy@mail.sysu.edu.cn}\\
$^1\mbox{School}$ of Science,\\ Shenzhen Campus of Sun Yat-sen University, Shenzhen 518107, China}

\date{}
\maketitle
\hrule

\begin{abstract}
In this paper, we consider the fifth order Camassa-Holm model. Firstly, we improve the local well-posedness results in \cite{TangLiu2015,FOCH2021}. Secondly, we give the blow up criteria and conditions for global existence. Finally, when $b=\frac 53$ in the model, we obtain the ill-posedness in $B^1_{\infty,1}$ and $B^{\frac 32}_{2,q}$ with $q\in(1,+\infty]$ in the sense of norm inflation.
	
	
\vspace*{5pt}
\noindent{\it Keywords}: The fifth order Camassa-Holm model; Local well-posedness; Global existence; Ill-posedness.
\end{abstract}

\vspace*{10pt}

\tableofcontents

\section{Introduction}
	
In this paper,  we focus on the Cauchy problem for the following fifth order Camassa-Holm (FOCH) equation \cite{LIUQIAO2018179}:
\begin{equation}\label{eq0}
\left\{\begin{array}{ll}
    m_t+u m_x+bu_x m=0,  \quad &t\in\mathbb{R}^{+},\ x\in\mathbb{R},  \\
    m=(1-\alpha^2\partial_{xx})(1-\beta^2\partial_{xx})u,  \quad &t\in\mathbb{R}^{+},\ x\in\mathbb{R},  \\
    m|_{t=0}=m_0,  \quad &x\in\mathbb{R},
\end{array}\right.
\end{equation}
where $\alpha,\beta,b\in\mathbb{R},\ \alpha\beta\neq0$. If we denote $P(D)=(1-\alpha^2\partial_{xx})^{-1}(1-\beta^2\partial_{xx})^{-1}$, then\ \eqref{eq0} can be rewritten as
\begin{equation}\label{eq1}
\left\{\begin{array}{ll}
    u_t+u u_{x}+F(u)=0,\quad &t\in\mathbb{R}^{+},\ x\in\mathbb{R},\\
    u|_{t=0}=u_{0},  \quad &x\in\mathbb{R},
\end{array}\right.
\end{equation}
where $F(u)=F_{1}(u)+F_{2}(u)+F_{3}(u)+F_{4}(u)$ with
\begin{equation}\label{eqF}
\left\{\begin{array}{ll}
    F_{1}(u)=\frac{b}{2}\partial_{x}P(D)\left(u^{2}\right), 
    & F_{2}(u)=\frac{3-b}{2}(\alpha^2+\beta^2)\partial_{x}P(D)\left(u_{x}^{2}\right),\\
    F_{3}(u)=\frac{5-3b}{2}\alpha^2\beta^2\partial_{x}P(D)\left(u_{xx}^{2}\right), 
    & F_{4}(u)=\frac{b-5}{2}\alpha^2\beta^2\partial_{x}^3P(D)\left(u_{x}^{2}\right).
\end{array}\right.
\end{equation}

If $\alpha=1,\ \beta=0$ (or $\alpha=0,\ \beta=1$) and $b=2$, \eqref{eq0} becomes the well-known Camassa-Holm (CH) equation, which is completely integrable and bi-Hamiltonian, and admits an infinite number of conserved quantities \cite{CH1993,Constantin1997,Constantin2001,Constantin2006,Fuchssteiner1981}. Applying the Littlewood–Paley theory and the transport equation theory, the local well-posedness of CH equation in Besov space $B^s_{p,r}$ with $s>\max\{1+\frac 1p, \frac 32\},\ r\in[1,+\infty)$ or $s=1+\frac 1p,\ p\in[1,2],\ r=1$ was investigated in \cite{Danchin2001,Danchin2003,LiYin2016}. Byers \cite{Byers2006ill} proved the ill-posedness in $H^s$ with $s<\frac 32$. There are many studies on the global existence and blow-up phenomena of CH equation, such as \cite{Constantin2000blow,Constantin1998globalblow,JiangNiZhou2012blow}. Recently, Guo et al. \cite{GuoLiuMolinetYin2019illCH} proved that the Camassa–Holm and related equations is ill-posed in $B^{1+\frac 1p}_{p,r}$ for $p\in[1,+\infty],\ r\in(1,+\infty]$. Li et.al \cite{Li2022CHill} established the ill-posedness in $B^s_{p,\infty}$ with $s>\max\{1+\frac 1p, \frac 32\}$. Ye et al. \cite{GUOLinJieShiDing} established the local well-posedness in $B^{1+\frac 1p}_{p,1}$ with $p\in[1,+\infty)$.  Guo et al. \cite{GUObushiding} gave the ill-posedness for the CH equation in $B^1_{\infty,1}$. In addition, when $\alpha=1,\ \beta=0$ (or $\alpha=0,\ \beta=1$), the Degasperis-Procesi equation and the b-equation are the special cases of \eqref{eq0} with $b=3$ and $b\in\mathbb{R}$, respectively. The local well-posedness, ill-posedness, global existence and blow-up phenomena were discussed in \cite{Coclite2006DP,EscherYin2008,GuiLiu2011DP,GuiLiu2008bfamily,GUOSharpill,HHG2014DPill,LiuYin2006DP,LiuYin2007DP}.

Recently, there have been explorations of the FOCH model \eqref{eq0}.
When $\alpha\neq\beta$, Liu and Qiao \cite{LIUQIAO2018179} studied peakon solutions including single pseudo-peakons, two-peakon and three-peakon interactional solutions, and N-peakon dynamical system. Zhu et al. \cite{FOCH2021} established the local well-posedness in $H^s$ with $s>\frac 72$ and global existence of the solution. The property of the infinite propagation speed and the long time behavior for the support of momentum density were also discussed in \cite{FOCH2021}. When $\alpha=\beta=1,\ b=2$, Tang and Liu \cite{TangLiu2015} established the local well-posedness in the critical space $B^{\frac 72}_{2,1}$ or in $B^s_{p,r}$ with $s>\max\{3+\frac 1p, \frac72\},\ r\in[1,+\infty)$, and obtained the peakon-like solution which guaranteed the ill-posedness in $B^{\frac 72}_{2,\infty}$. For the periodic case, McLachlan
and Zhang \cite{Robertfoch2202540circle} showed that the solution is locally well-posed in $H^s(\mathbb{T})$ with $s>\frac 72$ and established the global existence of the solution. Fu and Liu \cite{FuLiuFOCH2013pad} proved the non-uniform dependence on initial data in $H^s(\mathbb{T})$ with $s>\frac 72$. There are also studies on other higher-order equations of Camassa-Holm type, which have been presented in \cite{CocliteHoldenKarlsen2009,DingD2017,DingD2010,Robertfoch2202540circle,QiaoReyesfoch2,TianZhangXia2011,WangLiQiao2018}. 

Although there have been studies on the local well-posedness of the FOCH model \eqref{eq0} as described above, whether the model is well-posed or ill-posed in $B^s_{p,r}$ with $s<\max\{3+\frac 1p, \frac 72\}$ remains open. In this paper, taking advantage of the Moser-type inequality, we establish the local well-posedness in $B^s_{p,r}$ with $s>\max\{2+\frac 1p, \frac 52\}\ (resp.,\ s>\max\{1+\frac 1p, \frac 32\}),\ r\in[1,+\infty),\ p\in[1,+\infty]$ when $b\neq\frac 53\ (resp.,\ b=\frac 53)$, see Theorem \ref{the1}. Furthermore, using the compactness method and the Lagrange coordinate transformation, we obtain the local well-posedness in $B^{2+\frac 1p}_{p,1}\ (resp.,\ B^{1+\frac 1p}_{p,1})$ with $p\in[1,+\infty)$ when $b\neq\frac 53\ (resp.,\ b=\frac 53)$, which implies the index $\frac 52\ (resp.,\ \frac 32)$ is not necessary, see Theorem \ref{the2}. When $0\leq b\leq1$, we present a new global existence result by using a new conservation law $\|m\|_{L^{\frac 1b}}$, see Theorem \ref{global}. For the ill-posed problems, by defining the norm $\|\cdot\|_{B^0_{\infty,\infty,1}}$ and providing some relevant properties, we prove the ill-posedness in $B^1_{\infty,1}$ when $b=\frac 53$ in the sense of norm inflation, see Theorem \ref{ill1}. In addition, when $b=\frac 53$, we give the ill-posedness in $B^{\frac 32}_{2,q}$ with $q\in(1,+\infty]$, the proof of which uses the blow up criteria but does not rely on the blow-up results, see Theorem \ref{ill2}.

The paper is organized as follows. In Section \ref{Preliminaries}, we state some preliminaries which will be used in the sequel. In Section \ref{LOcal}, we improve the local well-posedness results in \cite{TangLiu2015,FOCH2021}. In Section \ref{Global existence}, we give the blow up criteria for \eqref{eq1} and conditions for global existence. In Section \ref{Ill-posedness}, we prove that when $b=\frac 53$, the Cauchy problem \eqref{eq1} is ill-posed in $B^1_{\infty,1}$ and $B^{\frac 32}_{2,q}$ with $q\in(1,+\infty]$ in the sense of norm inflation.
	
\section{\textbf{Preliminaries}}\label{Preliminaries}
	
In this section, we first present some facts on the Littlewood-Paley decomposition and nonhomogeneous Besov spaces.

\begin{prop}\cite{BCD}\label{Bernstein’s inequalities}
Let $\mathscr{C}$ be an annulus and $\mathscr{B}$ a ball. A constant C exists such that for any $k\in\mathbb{N}$, $1\leq p\leq q\leq\infty$, and any function $u$ of $L^p(\mathbb{R}^d)$, we have
$$
\mathrm{Supp~}\widehat{u}\subset\lambda\mathscr{B}\Longrightarrow\|D^ku\|_{L^q}=\sup_{|\alpha|=k}\|\partial^\alpha u\|_{L^q}\leq C^{k+1}\lambda^{k+d(\frac{1}{p}-\frac{1}{q})}\|u\|_{L^p},
$$
$$
\mathrm{Supp~}\widehat{u}\subset\lambda\mathscr{C}\Longrightarrow C^{-k-1}\lambda^k\|u\|_{L^p}\leq\|D^ku\|_{L^p}\leq C^{k+1}\lambda^k\|u\|_{L^p}.
$$
\end{prop}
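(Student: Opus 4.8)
The plan is to prove both estimates by the standard device of writing a frequency‑localized function as a convolution against a rescaled \emph{fixed} kernel and then applying Young's convolution inequality; the entire difficulty sits in showing that the implied constants grow at most geometrically in $k$, which is exactly what the exponent $k+1$ encodes.

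\emph{First inequality.} First I would fix a cutoff $\chi\in C_c^\infty(\mathbb R^d)$ with $\chi\equiv 1$ on a neighbourhood of $\mathscr B$, so that $\widehat u=\chi(\cdot/\lambda)\widehat u$. For a multi-index $\alpha$ with $|\alpha|=k$ this gives $\widehat{\partial^\alpha u}(\xi)=\lambda^k h_\alpha(\xi/\lambda)\widehat u(\xi)$ with $h_\alpha(\eta)=(i\eta)^\alpha\chi(\eta)$, hence $\partial^\alpha u=\lambda^k g_\alpha^\lambda* u$ where $g_\alpha=\mathcal F^{-1}h_\alpha$ and $g_\alpha^\lambda(x)=\lambda^d g_\alpha(\lambda x)$. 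Young's inequality with $1+\frac1q=\frac1r+\frac1p$ then yields $\|\partial^\alpha u\|_{L^q}\le\lambda^{k+d(\frac1p-\frac1q)}\|g_\alpha\|_{L^r}\|u\|_{L^p}$, so everything reduces to the bound $\|g_\alpha\|_{L^r}\le C^{k+1}$ with $C$ independent of $\alpha$. To get this I would integrate by parts $2N$ times ($N>d$ fixed) in $g_\alpha(x)=(2\pi)^{-d}\int(i\xi)^\alpha\chi(\xi)e^{ix\cdot\xi}\,d\xi$ using $(1-\Delta_\xi)^N e^{ix\cdot\xi}=(1+|x|^2)^N e^{ix\cdot\xi}$, estimate $(1-\Delta_\xi)^N(\xi^\alpha\chi)$ pointwise on the fixed compact support of $\chi$ by a polynomial in $k$ times $C^k$, and conclude $(1+|x|^2)^N|g_\alpha(x)|\le C^{k+1}$; since $N>d$ this gives $\|g_\alpha\|_{L^1}\le C^{k+1}$, while $\|g_\alpha\|_{L^\infty}\le C^{k+1}$ is immediate, so interpolation covers every $r\in[1,\infty]$. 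Taking sup over $|\alpha|=k$ proves the first inequality, and the choices $q=p$, $k=0$ with a ball $\mathscr B\supset\mathscr C$ simultaneously deliver the upper bound in the second.

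\emph{Lower bound in the second inequality.} Here I would fix $\chi\in C_c^\infty(\mathbb R^d)$ supported away from the origin with $\chi\equiv1$ near $\mathscr C$, and use $\sum_j\xi_j^2=|\xi|^2$ to write, for $\widehat u$ supported in $\lambda\mathscr C$, $\widehat u(\xi)=\lambda^{-1}\sum_j\theta_j(\xi/\lambda)\widehat{\partial_j u}(\xi)$ with $\theta_j(\eta)=\eta_j\chi(\eta)/(i|\eta|^2)$, each $\theta_j$ Schwartz (smoothness at the origin is free because $\chi$ vanishes there). Young's inequality then gives $\|u\|_{L^p}\le C\lambda^{-1}\|Du\|_{L^p}$ with $C=\sum_j\|\mathcal F^{-1}\theta_j\|_{L^1}$ a genuine constant. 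Applying this with $\partial^\alpha u$, whose Fourier transform is again supported in $\lambda\mathscr C$, in place of $u$ and iterating $k$ times yields $\lambda^k\|u\|_{L^p}\le C^k\|D^k u\|_{L^p}$, which is the asserted bound with a factor of $C$ to spare.

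\emph{Main obstacle.} The scaling-plus-Young computation is routine; the only genuinely delicate step is the uniform-in-$\alpha$ estimate $\|g_\alpha\|_{L^r}\le C^{k+1}$. A soft argument (``a Schwartz function lies in every $L^r$'') does not suffice, since the kernel $g_\alpha=\mathcal F^{-1}(\xi^\alpha\chi)$ changes with $\alpha$, so one really needs the integration-by-parts bookkeeping to see that its $L^1$ norm grows no faster than geometrically in $k=|\alpha|$ — and it is exactly this geometric control that makes the proposition usable when it is later summed over dyadic blocks in the Littlewood–Paley arguments.
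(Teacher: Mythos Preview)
Your argument is correct and is precisely the classical proof given in the cited reference \cite{BCD}; note that the paper under review does not supply its own proof of this proposition but merely quotes it. The kernel estimate $\|g_\alpha\|_{L^r}\le C^{k+1}$ via integration by parts and the iterative reverse inequality on the annulus are exactly the ingredients used there, so there is nothing to contrast.
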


\begin{prop}\cite{BCD}\label{prop0}
Let $\mathcal{B}=\{\xi\in\mathbb{R}^d: |\xi|\leq \frac 43\}$ and $\mathcal{C}=\{\xi\in\mathbb{R}^d: \frac 34 \leq|\xi|\leq \frac 83\}$. There exist two smooth, radial functions $\chi$ and $\varphi$, valued in the interval $[0,1]$, belonging respectively to $\mathcal{D(B)}$ and $\mathcal{D(C)}$, such that
$$
	\forall\ \xi\in\mathbb{R}^{d},\ \chi(\xi)+\sum_{j\geq0}\varphi(2^{-j}\xi)=1, 
$$
$$
	\forall\ \xi\in\mathbb{R}^{d}\setminus\{0\},\ \sum_{j\in\mathbb{Z}}\varphi(2^{-j}\xi)=1, 
$$
$$
	|j-j^{\prime}|\geq2\Rightarrow\mathrm{Supp~}\varphi(2^{-j}\cdot)\cap\mathrm{Supp~}\varphi(2^{-j^{\prime}}\cdot)=\emptyset, 
$$
$$
	j\geq1\Rightarrow\mathrm{Supp~}\chi\cap\mathrm{Supp~}\varphi(2^{-j}\cdot)=\emptyset,
$$
the set $\widetilde{\mathcal{C}}\triangleq B(0,\frac 23)+\mathcal{C}$ is an annulus, and we see
$$
|j-j^{\prime}|\geq5\Rightarrow2^{j^{\prime}}\widetilde{\mathcal{C}}\cap2^j\mathcal{C}=\emptyset.
$$
Let $u\in\mathcal{S}'$. Defining
$$
\Delta_{j}u\triangleq0\ \text{if}\ j\leq-2,\ \Delta_{-1}u\triangleq\chi(D)u=\mathcal{F}^{-1}(\chi\mathcal{F}u),
$$
$$
\Delta_{j}u\triangleq\varphi(2^{-j}D)u=\mathcal{F}^{-1}(\varphi(2^{-j}\cdot)\mathcal{F}u)\ \text{if}\ j\geq0,
$$
$$
S_j u\triangleq\sum_{j^{\prime}\leq j-1}\Delta_{j^{\prime}}u,
$$
we have the following Littlewood-Paley decomposition:
$$u=\sum_{j\in\mathbb{Z}}\Delta_{j}u\quad in\ \mathcal{S}'.$$
\end{prop}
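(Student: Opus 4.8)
\emph{Proof proposal.} This is the standard construction of a dyadic partition of unity together with the induced Littlewood--Paley decomposition, and I would carry it out in three steps. The plan is first to build $\chi$ and $\varphi$ from a single cutoff. Fix a radial, radially nonincreasing function $\theta\in C_c^\infty(\mathbb{R}^d;[0,1])$ with $\theta\equiv1$ on $\overline{B(0,3/4)}$ and $\mathrm{Supp}\,\theta\subset\mathcal{B}$ (obtained, e.g., by mollifying the indicator function of a suitable ball), and set $\chi:=\theta$ and $\varphi(\xi):=\theta(\xi/2)-\theta(\xi)$. Monotonicity of $\theta$ gives $0\le\varphi\le1$; moreover $\varphi$ vanishes for $|\xi|<3/4$ (there $\theta(\xi/2)=\theta(\xi)=1$) and for $|\xi|>8/3$ (there $\theta(\xi/2)=\theta(\xi)=0$), so $\mathrm{Supp}\,\varphi\subset\mathcal{C}$. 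Both functions are smooth and radial, and the two partition-of-unity identities are telescoping sums:
$$\chi(\xi)+\sum_{0\le j\le N}\varphi(2^{-j}\xi)=\theta(2^{-N-1}\xi),\qquad \sum_{|j|\le N}\varphi(2^{-j}\xi)=\theta(2^{-N-1}\xi)-\theta(2^{N}\xi),$$
both of which tend to $1$ as $N\to\infty$ when $\xi\neq0$, the first being moreover identically $1$ at $\xi=0$.

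Next I would check the geometric (almost-orthogonality) assertions, all of which reduce to comparing the radii of dyadic dilates of $\mathcal{C}$. Since $\mathrm{Supp}\,\varphi(2^{-j}\cdot)=2^{j}\mathcal{C}=\{3\cdot2^{j-2}\le|\xi|\le2^{j+3}/3\}$, for $j'\ge j+2$ the inner radius $3\cdot2^{j'-2}\ge3\cdot2^{j}$ of $2^{j'}\mathcal{C}$ exceeds the outer radius $(8/3)\,2^{j}$ of $2^{j}\mathcal{C}$, which gives the disjointness for $|j-j'|\ge2$; for $j\ge1$ the inner radius of $2^{j}\mathcal{C}$ is $\ge3/2>4/3$, so $2^{j}\mathcal{C}$ is disjoint from $\mathrm{Supp}\,\chi\subset\mathcal{B}$. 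The Minkowski sum $\widetilde{\mathcal{C}}=B(0,2/3)+\mathcal{C}$ is, by the triangle inequality, exactly the annulus $\{1/12\le|\xi|\le10/3\}$ (its inner radius $3/4-2/3=1/12$ being positive), and $2^{j'}\widetilde{\mathcal{C}}\cap2^{j}\mathcal{C}=\emptyset$ for $|j-j'|\ge5$ is once more a comparison of the radii $2^{j'}\cdot10/3$, $2^{j'}/12$ with $2^{j}\cdot3/4$, $2^{j}\cdot8/3$; the numerical choices $4/3,\,3/4,\,8/3,\,2/3$ in the definitions of $\mathcal{B},\mathcal{C},\widetilde{\mathcal{C}}$ are exactly what make all these inequalities hold simultaneously.

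Finally I would define $\Delta_j$ and $S_j$ by the displayed Fourier-multiplier formulas; telescoping the definition of $\Delta_{j'}$ gives $S_j=\chi(2^{-j}D)$ for $j\ge0$. Applying the first partition-of-unity identity to $\widehat u$ yields $\chi(D)u+\sum_{0\le j\le N}\Delta_j u=S_{N+1}u=\chi(2^{-N-1}D)u$ in $\mathcal{S}'$. Since $\chi$ is real and radial, $\chi(2^{-N-1}D)$ is formally self-adjoint, so for $\phi\in\mathcal{S}$ one has $\langle\chi(2^{-N-1}D)u,\phi\rangle=\langle u,\chi(2^{-N-1}D)\phi\rangle$; and as the multipliers $\chi(2^{-N-1}\cdot)$ together with all their derivatives are uniformly bounded and converge pointwise to $1$, dominated convergence on the Fourier side gives $\chi(2^{-N-1}D)\phi\to\phi$ in $\mathcal{S}$. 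Hence $S_{N+1}u\to u$ in $\mathcal{S}'$, which is the claimed decomposition $u=\sum_{j\in\mathbb{Z}}\Delta_j u$.

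No step is genuinely hard. The two places that require care are the bookkeeping in the second step, where all the radius inequalities must be arranged to hold at once for one and the same pair $(\chi,\varphi)$, and the convergence in the third step, which must be taken in the topology of $\mathcal{S}'$: one has to invoke the uniform Schwartz-seminorm bounds on the truncated multipliers rather than an $L^2$- or $L^p$-type estimate.
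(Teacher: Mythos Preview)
Your argument is correct and is precisely the standard construction from the reference [BCD] (Bahouri--Chemin--Danchin). The paper does not prove this proposition; it merely cites it as a preliminary fact, so there is no ``paper's own proof'' to compare against. Your three-step outline---building $\varphi$ as a dyadic difference $\theta(\cdot/2)-\theta(\cdot)$ of a single monotone cutoff, checking the radius inequalities, and passing to the limit $S_{N+1}u\to u$ in $\mathcal{S}'$ via duality---is exactly how [BCD] proceeds.

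One small numerical remark on your second step: in the case $j'=j+5$, the inner radius of $2^{j'}\widetilde{\mathcal{C}}$ is $\tfrac{1}{12}\cdot2^{j+5}=\tfrac{8}{3}\cdot2^{j}$, which \emph{equals} the outer radius of $2^{j}\mathcal{C}$. Whether the intersection is empty then depends on whether $B(0,2/3)$ is taken open and whether $\mathcal{C}$ includes its boundary. This is harmless in practice (the actual supports of $\chi,\varphi$ can be arranged to lie in the open sets), but you may wish to say ``$\mathrm{Supp}\,\varphi$ is contained in the open annulus'' at the construction stage to make the disjointness clean.
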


\begin{defi}\cite{BCD}\label{Besov Space}
Let $s\in\mathbb{R}$ and $(p,r)\in[1,\infty]^2$. The nonhomogeneous Besov
space $B^s_{p,r}$ consists of all $u\in\mathcal{S}'$ such that
$$
\|u\|_{B_{p,r}^s}\triangleq\left\|(2^{js}\|\Delta_ju\|_{L^p})_{j\in\mathbb{Z}}\right\|_{\ell^r(\mathbb{Z})}<\infty.
$$
\end{defi}

\begin{defi}\cite{BCD}\label{Bony decomposition}
Considering $u,v\in\mathcal{S}'$, we have the following Bony decomposition:
$$uv=T_u v+T_v u+R(u,v),$$
where
$$
T_u v=\sum_j S_{j-1}u\Delta_j v,\ R(u,v)=\sum_{|k-j|\leq1}\Delta_ku\Delta_jv.
$$
\end{defi}

\begin{prop}\cite{BCD}\label{prop1}
Let $s,s'\in\mathbb{R},\ 1\leq p,p_1,p_2,r,r_1,r_2\leq\infty.$  \\
(1) $B^s_{p,r}$ is a Banach space, and is continuously embedded in $\mathcal{S}'$. \\
(2) If $r<\infty$, then $\lim\limits_{j\rightarrow\infty}\|S_j u-u\|_{B^s_{p,r}}=0$. If $p,r<\infty$, then $\mathcal{C}_0^{\infty}$ is dense in $B^s_{p,r}$. \\
(3) If $p_1\leq p_2$ and $r_1\leq r_2$, then $ B^s_{p_1,r_1}\hookrightarrow B^{s-d(\frac 1 {p_1}-\frac 1 {p_2})}_{p_2,r_2}$. If $s<s'$, then the embedding $B^{s'}_{p,r_2}\hookrightarrow B^{s}_{p,r_1}$ is locally compact. \\
(4) $B^s_{p,r}\hookrightarrow L^{\infty} \Leftrightarrow s>\frac d p\ \text{or}\ s=\frac d p,\ r=1$. \\
(5) If $s\in\mathbb{R}^+\backslash\mathbb{N}$, then $B^s_{\infty,\infty}$ coincides with the Hölder space $\mathcal{C}^{[s],s-[s]}$. If $s\in\mathbb{N}$, then $B^s_{\infty,\infty}$ is strictly larger than the space $\mathcal{C}^{s}\ ($and than $\mathcal{C}^{s-1,1}$, if $s\in\mathbb{N}^\ast)$.\\
(6) Fatou property: if $(u_n)_{n\in\mathbb{N}}$ is a bounded sequence in $B^s_{p,r}$, then an element $u\in B^s_{p,r}$ and a subsequence $(u_{n_k})_{k\in\mathbb{N}}$ exist such that
$$\lim_{k\rightarrow\infty}u_{n_k}=u\ \text{in}\ \mathcal{S}'\quad \text{and}\quad \|u\|_{B^s_{p,r}}\leq C\liminf_{k\rightarrow\infty}\|u_{n_k}\|_{B^s_{p,r}}.$$
(7) Let $m\in\mathbb{R}$ and $f$ be a $S^m$- multiplier, $(i.e.\ f$ is a smooth function and satisfies that $\forall\ \alpha\in\mathbb{N}^d$, $\exists\ C=C(\alpha)$, such that $|\partial^{\alpha}f(\xi)|\leq C(1+|\xi|)^{m-|\alpha|},\ \forall\ \xi\in\mathbb{R}^d)$. Then the operator $f(D)=\mathcal{F}^{-1}(f\mathcal{F}\cdot)$ is continuous from $B^s_{p,r}$ to $B^{s-m}_{p,r}$.\\
(8) If $s<s'$, then for all $\phi\in \mathcal{S}(\mathbb{R})$, multiplication by $\phi$ is a compact operator from $B^{s'}_{p,\infty}$ to $B^{s}_{p,1}$.
\end{prop}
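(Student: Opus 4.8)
The plan is to obtain the eight assertions from three ingredients: the Bernstein inequalities recalled above, the almost-orthogonality of the Littlewood--Paley blocks built into Proposition~\ref{prop0}, and elementary properties of the sequence spaces $\ell^r(\mathbb{Z})$. None of the steps is deep, and since every item is classical the honest course is simply to refer to \cite{BCD}; but here I indicate the mechanism behind each.

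For (1), I would note that given a Cauchy sequence $(u_n)$ in $B^s_{p,r}$, each weighted block sequence $(2^{js}\Delta_j u_n)_n$ is Cauchy in $L^p$ and converges to a spectrally localized limit $u^{(j)}$; the series $u=\sum_j u^{(j)}$ converges in $\mathcal{S}'$ because $B^s_{p,r}\hookrightarrow\mathcal{S}'$, which one checks by pairing with $\phi\in\mathcal{S}$, writing $\langle u,\phi\rangle=\sum_j\langle\Delta_j u,\widetilde{\Delta}_j\phi\rangle$ and estimating each term with H\"older and Bernstein together with the rapid decay of $\widehat{\phi}$ near $0$ and $\infty$. The density statements in (2) follow from $\|S_j u-u\|_{B^s_{p,r}}^r=\sum_{j'\ge j-1}(2^{j's}\|\Delta_{j'}u\|_{L^p})^r\to 0$ when $r<\infty$, followed by cutoff-and-mollification using continuity of translations on $L^p$ for $p<\infty$. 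The embeddings in (3) are the first Bernstein inequality applied blockwise, $\|\Delta_j u\|_{L^{p_2}}\lesssim 2^{jd(1/p_1-1/p_2)}\|\Delta_j u\|_{L^{p_1}}$, combined with $\ell^{r_1}\hookrightarrow\ell^{r_2}$, while local compactness for $s<s'$ comes from the Fr\'echet--Kolmogorov criterion, the equicontinuity and tightness being supplied by the uniform $j$-decay of the blocks and their smoothness.

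For (4), Bernstein gives $\|u\|_{L^\infty}\le\sum_j\|\Delta_j u\|_{L^\infty}\lesssim\sum_j 2^{jd/p}\|\Delta_j u\|_{L^p}$, which is finite exactly when $s>d/p$ (geometric summation against the $\ell^r$ bound) or $s=d/p,\ r=1$; the necessity direction I would settle with lacunary test functions. Assertion (5) identifies $B^s_{\infty,\infty}$ with the H\"older--Zygmund class by comparing $2^{js}\|\Delta_j u\|_{L^\infty}$ with the H\"older seminorm through the mean value theorem and a telescoping of $u-S_j u$, the strict inclusions at integer $s$ reflecting that the Zygmund class strictly contains $\mathcal{C}^{s-1,1}$. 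The Fatou property (6) I would deduce from weak-$*$ sequential compactness in $\mathcal{S}'$: a bounded subsequence has a limit $u$, and since $\Delta_j u_{n_k}\to\Delta_j u$ in $\mathcal{S}'$ with bounded $L^p$ norms one gets $\|\Delta_j u\|_{L^p}\le\liminf_k\|\Delta_j u_{n_k}\|_{L^p}$, after which Fatou's lemma on $\ell^r$ finishes it. For (7), the symbol $f(\xi)\varphi(2^{-j}\xi)$ rescales to one whose inverse Fourier transform has $L^1$ norm $\lesssim 2^{jm}$, so Young's inequality yields $\|f(D)\Delta_j u\|_{L^p}\lesssim 2^{jm}\|\Delta_j u\|_{L^p}$ and hence the claimed mapping $B^s_{p,r}\to B^{s-m}_{p,r}$. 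Finally (8) follows by composition: multiplication by $\phi\in\mathcal{S}$ preserves $B^{s'}_{p,\infty}$ by a product estimate, and then the local compactness from (3) upgrades the target to a compact embedding into $B^s_{p,1}$.

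The only genuinely non-mechanical points, and thus the place I would expect the real work, are the multiplier estimate in (7), which hinges on a uniform bound for the $L^1$ norm of the inverse Fourier transform of the rescaled truncated symbol, and the sharpness parts of (4) and (5), where one must construct lacunary examples saturating the embeddings; everything else is routine bookkeeping with dyadic blocks once the Bernstein inequalities and the $\ell^r$ manipulations are in place. In practice I would therefore not reproduce these arguments and would simply cite \cite{BCD}.
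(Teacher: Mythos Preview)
Your proposal is fine and in fact goes well beyond what the paper does: the paper gives no proof at all for this proposition and simply cites \cite{BCD}, treating it as a standard toolbox result. Your sketched mechanisms for each item are essentially the right ones, and your final conclusion---to cite \cite{BCD} rather than reproduce the arguments---is exactly what the paper does.
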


\begin{lemm}\cite{BCD}\label{log}
There are two useful interpolation inequalities:\\
(1) If $u\in B^{s_1}_{p,r}\cap B^{s_2}_{p,r}$, then for all $\theta\in[0,1]$, we have
$$ \|u\|_{B^{\theta s_1+(1-\theta)s_2}_{p,r}}\leq \|u\|_{B^{s_1}_{p,r}}^{\theta}\|u\|_{B^{s_2}_{p,r}}^{1-\theta}. $$
(2) Let $\varepsilon\in(0,1)$. A constant $C$ exists such that for any $f\in B^\varepsilon_{\infty,\infty}$,
$$
\|f\|_{L^{\infty}}\leq\frac{C}{\varepsilon}\|f\|_{B_{\infty,\infty}^{0}}\left(1+\log\frac{\|f\|_{B_{\infty,\infty}^{\varepsilon}}}{\|f\|_{B_{\infty,\infty}^{0}}}\right).
$$ 
\end{lemm}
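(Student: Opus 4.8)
\medskip
\noindent\emph{Proof plan.} For part (1) I would argue frequency block by frequency block. For every $j\in\mathbb{Z}$ one has the elementary identity
$$
2^{j(\theta s_1+(1-\theta)s_2)}\|\Delta_j u\|_{L^p}=\bigl(2^{js_1}\|\Delta_j u\|_{L^p}\bigr)^{\theta}\,\bigl(2^{js_2}\|\Delta_j u\|_{L^p}\bigr)^{1-\theta},
$$
and then applying Hölder's inequality in $\ell^r(\mathbb{Z})$ with the conjugate exponents $\tfrac1\theta$ and $\tfrac1{1-\theta}$ (the cases $\theta\in\{0,1\}$ being trivial) gives
$$
\Bigl\|\bigl(2^{j(\theta s_1+(1-\theta)s_2)}\|\Delta_j u\|_{L^p}\bigr)_{j}\Bigr\|_{\ell^r}\leq\Bigl\|\bigl(2^{js_1}\|\Delta_j u\|_{L^p}\bigr)_{j}\Bigr\|_{\ell^r}^{\theta}\,\Bigl\|\bigl(2^{js_2}\|\Delta_j u\|_{L^p}\bigr)_{j}\Bigr\|_{\ell^r}^{1-\theta},
$$
which is exactly the claimed interpolation bound. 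This step is routine.

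For part (2) I would start from the Littlewood--Paley decomposition $f=\sum_{j\geq-1}\Delta_j f$ and the trivial bound $\|f\|_{L^\infty}\leq\sum_{j\geq-1}\|\Delta_j f\|_{L^\infty}$, the absolute convergence of the series being a byproduct of the estimates below (and consistent with $B^\varepsilon_{\infty,\infty}\hookrightarrow L^\infty$, Proposition \ref{prop1}(4)). We may assume $\|f\|_{B^0_{\infty,\infty}}>0$, since otherwise $f=0$ and there is nothing to prove. Fix an integer $N\geq-1$, to be chosen, and split the sum at $j=N$. On the low frequencies I use $\|\Delta_j f\|_{L^\infty}\leq\|f\|_{B^0_{\infty,\infty}}$ (immediate from Definition \ref{Besov Space}), which contributes at most $(N+2)\|f\|_{B^0_{\infty,\infty}}$. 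On the high frequencies I use $\|\Delta_j f\|_{L^\infty}\leq 2^{-j\varepsilon}\|f\|_{B^\varepsilon_{\infty,\infty}}$ and sum the geometric series, using $1-2^{-\varepsilon}\geq c\,\varepsilon$ for $\varepsilon\in(0,1)$:
$$
\sum_{j>N}\|\Delta_j f\|_{L^\infty}\leq\|f\|_{B^\varepsilon_{\infty,\infty}}\sum_{j>N}2^{-j\varepsilon}=\frac{2^{-(N+1)\varepsilon}}{1-2^{-\varepsilon}}\,\|f\|_{B^\varepsilon_{\infty,\infty}}\leq\frac{C}{\varepsilon}\,2^{-N\varepsilon}\|f\|_{B^\varepsilon_{\infty,\infty}}.
$$
Thus $\|f\|_{L^\infty}\leq(N+2)\|f\|_{B^0_{\infty,\infty}}+\frac{C}{\varepsilon}\,2^{-N\varepsilon}\|f\|_{B^\varepsilon_{\infty,\infty}}$.

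It remains to optimize over $N$. Because $\|f\|_{B^0_{\infty,\infty}}\leq\|f\|_{B^\varepsilon_{\infty,\infty}}$, the number $\frac1\varepsilon\log_2\frac{\|f\|_{B^\varepsilon_{\infty,\infty}}}{\|f\|_{B^0_{\infty,\infty}}}$ is nonnegative, so I take $N$ to be the least integer not smaller than it; then $2^{-N\varepsilon}\|f\|_{B^\varepsilon_{\infty,\infty}}\leq\|f\|_{B^0_{\infty,\infty}}$ and $N+2\leq 3+\frac1\varepsilon\log_2\frac{\|f\|_{B^\varepsilon_{\infty,\infty}}}{\|f\|_{B^0_{\infty,\infty}}}$. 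Substituting and absorbing the change of log base and the additive constants into $C$ (using $\varepsilon<1$ to write $3\leq 3/\varepsilon$, and the nonnegativity of the logarithm for the high-frequency term), both contributions are bounded by a constant multiple of $\frac{1}{\varepsilon}\|f\|_{B^0_{\infty,\infty}}\bigl(1+\log\frac{\|f\|_{B^\varepsilon_{\infty,\infty}}}{\|f\|_{B^0_{\infty,\infty}}}\bigr)$, which is the assertion.

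I expect the only points requiring care to be bookkeeping rather than conceptual: checking that the cutoff $N$ is a legitimate integer $\geq-1$, and verifying that the $1/\varepsilon$ factor in the final bound is produced consistently, once by the geometric sum through $1-2^{-\varepsilon}\gtrsim\varepsilon$ and once by the size of $N$. The degenerate cases $\theta\in\{0,1\}$ in (1) and $\|f\|_{B^0_{\infty,\infty}}=0$ in (2) are immediate.
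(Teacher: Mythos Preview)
Your argument is correct and is the standard one. Note, however, that the paper does not actually supply a proof of this lemma: it is quoted from \cite{BCD} and stated without proof, so there is nothing in the paper to compare your argument against. What you have written is essentially the textbook derivation (H\"older in $\ell^r$ for part (1); low/high frequency splitting with optimization of the cutoff for part (2)), and all the bookkeeping you flag---the legitimacy of the integer cutoff $N\geq-1$, the two sources of the $1/\varepsilon$ factor, and the degenerate cases---is handled correctly.
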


\begin{prop}\cite{BCD}
Let $s\in\mathbb{R},\ 1\leq p,r\leq\infty.$
\begin{equation*}\left\{
	\begin{array}{l}
		B^s_{p,r}\times B^{-s}_{p',r'}\longrightarrow\mathbb{R},  \\
		(u,\phi)\longmapsto \sum\limits_{|j-j'|\leq 1}\langle \Delta_j u,\Delta_{j'}\phi\rangle,
	\end{array}\right.
\end{equation*}
defines a continuous bilinear functional on $B^s_{p,r}\times B^{-s}_{p',r'}$. Denoted by $Q^{-s}_{p',r'}$ the set of functions $\phi$ in $\mathcal{S}'$ such that$\|\phi\|_{B^{-s}_{p',r'}}\leq 1$. If $u$ is in $\mathcal{S}'$, then we have
$$\|u\|_{B^s_{p,r}}\leq C\sup_{\phi\in Q^{-s}_{p',r'}}\langle u,\phi\rangle.$$
\end{prop}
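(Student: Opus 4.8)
The plan is to establish the two assertions in turn: the continuity of the bilinear functional, and then the reverse bound $\|u\|_{B^s_{p,r}}\le C\sup_{\phi\in Q^{-s}_{p',r'}}\langle u,\phi\rangle$. Continuity is just Hölder's inequality used twice. For a pair $(j,j')$ with $|j-j'|\le 1$, Hölder in the space variable gives $|\langle\Delta_j u,\Delta_{j'}\phi\rangle|\le\|\Delta_j u\|_{L^p}\|\Delta_{j'}\phi\|_{L^{p'}}$, and since $|j-j'|\le 1$ the weights $2^{js}$ and $2^{j's}$ are comparable (with constant depending only on $s$), so $\|\Delta_j u\|_{L^p}\|\Delta_{j'}\phi\|_{L^{p'}}\lesssim (2^{js}\|\Delta_j u\|_{L^p})(2^{-j's}\|\Delta_{j'}\phi\|_{L^{p'}})$. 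Splitting the sum into the three bands $j'=j-1,j,j+1$ and applying Hölder's inequality for series with exponents $(r,r')$ yields $\sum_{|j-j'|\le 1}|\langle\Delta_j u,\Delta_{j'}\phi\rangle|\le C\|u\|_{B^s_{p,r}}\|\phi\|_{B^{-s}_{p',r'}}$. This proves continuity, and in particular $\sup_{\phi\in Q^{-s}_{p',r'}}\langle u,\phi\rangle\le C\|u\|_{B^s_{p,r}}$; it also shows that for $u\in\mathcal S'$ the pairing in the statement is the one given, term by term, by this bilinear functional.

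For the reverse bound, set $M:=\sup_{\phi\in Q^{-s}_{p',r'}}\langle u,\phi\rangle$ and assume $M<\infty$. I would fix a truncation level $N$ and construct $\phi_N\in\mathcal S'$ with $\|\phi_N\|_{B^{-s}_{p',r'}}\le C_0$ (independent of $N$) and $\langle u,\phi_N\rangle\ge c_0\big(\sum_{|j|\le N}(2^{js}\|\Delta_j u\|_{L^p})^r\big)^{1/r}$, then let $N\to\infty$. The construction is the standard double-duality argument. First, by duality between $L^{p'}$ and $L^p$, pick $h_j\in L^{p'}$ with $\|h_j\|_{L^{p'}}\le 1$ and $\langle\Delta_j u,h_j\rangle\ge(1-\ep)\|\Delta_j u\|_{L^p}$; by self-adjointness of $\Delta_j$ the function $g_j:=\Delta_j h_j$ satisfies $\|g_j\|_{L^{p'}}\lesssim 1$, has Fourier support in the annulus of $\Delta_j$, and $\langle u,g_j\rangle=\langle\Delta_j u,h_j\rangle$. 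Second, by duality between $\ell^{r'}$ and $\ell^r$, choose a finitely supported nonnegative sequence $(\beta_j)_{|j|\le N}$ with $\|(\beta_j)\|_{\ell^{r'}}\le 1$ and $\sum_{|j|\le N}\beta_j\,2^{js}\|\Delta_j u\|_{L^p}\ge(1-\ep)\big(\sum_{|j|\le N}(2^{js}\|\Delta_j u\|_{L^p})^r\big)^{1/r}$ (for $r=\infty$, a single Dirac mass at the maximizing index). Then set $\phi_N:=\sum_{|j|\le N}\beta_j\,2^{js}\,g_j$. Since each $g_j$ has the same frequency localization as $\Delta_j$, the support disjointness in Proposition \ref{prop0} gives $\Delta_{j'}\phi_N=\sum_{|j-j'|\le 1}\beta_j\,2^{js}\Delta_{j'}g_j$, hence $2^{-j's}\|\Delta_{j'}\phi_N\|_{L^{p'}}\lesssim\sum_{|j-j'|\le 1}\beta_j\,2^{(j-j')s}\lesssim\sum_{|j-j'|\le 1}\beta_j$, and taking the $\ell^{r'}$ norm over $j'$ yields $\|\phi_N\|_{B^{-s}_{p',r'}}\lesssim\|(\beta_j)\|_{\ell^{r'}}\lesssim 1$. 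On the other hand $\langle u,\phi_N\rangle=\sum_{|j|\le N}\beta_j\,2^{js}\langle u,g_j\rangle\ge(1-\ep)^2\big(\sum_{|j|\le N}(2^{js}\|\Delta_j u\|_{L^p})^r\big)^{1/r}$. Dividing $\phi_N$ by the implied constant so that it lies in $Q^{-s}_{p',r'}$, then letting $\ep\to 0$ and $N\to\infty$ (monotone convergence in the $\ell^r$ norm), we obtain $\|u\|_{B^s_{p,r}}\le CM$.

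The main obstacle is the bookkeeping that shows $\phi_N$ genuinely has $B^{-s}_{p',r'}$ norm bounded independently of $N$: this is exactly where the geometry of the Littlewood--Paley decomposition enters, namely that $\Delta_{j'}g_j$ vanishes unless $|j-j'|\le 1$ and that $2^{(j-j')s}$ stays bounded on that finite band, so the dyadic weights occurring in the Besov norm are absorbed and only $\|(\beta_j)\|_{\ell^{r'}}$ survives. A secondary (routine) point is the treatment of the endpoints $p\in\{1,\infty\}$ and $r\in\{1,\infty\}$, where the reflexive-case duality identities are replaced by $\|f\|_{L^1}=\sup_{\|h\|_{L^\infty}\le 1}\int fh$, $\|f\|_{L^\infty}=\sup_{\|h\|_{L^1}\le 1}\int fh$, $\|a\|_{\ell^1}=\sup_{\|b\|_{\ell^\infty}\le 1}\sum a_jb_j$, $\|a\|_{\ell^\infty}=\sup_j|a_j|$; since each $\Delta_j u$ is a continuous function, these suprema are still (nearly) attained by admissible test functions, so the argument is unaffected.
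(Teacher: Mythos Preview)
The paper does not prove this proposition at all: it is stated with the citation \cite{BCD} and no proof is given, as it is a standard result from Bahouri--Chemin--Danchin's monograph. There is therefore nothing in the paper to compare your argument against.

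That said, your proof is essentially the standard one and is correct. The continuity direction is exactly H\"older twice (in $L^p$ and in $\ell^r$), and your reverse bound is the usual double-duality construction: near-extremizers $h_j$ for $\|\Delta_j u\|_{L^p}$ via $L^p$--$L^{p'}$ duality, then a near-extremizing sequence $(\beta_j)$ for the $\ell^r$ norm via $\ell^r$--$\ell^{r'}$ duality, combined into $\phi_N=\sum\beta_j 2^{js}\Delta_j h_j$. The key point you correctly identify is that the frequency localization of $g_j=\Delta_j h_j$ guarantees $\Delta_{j'}\phi_N$ only sees indices $|j-j'|\le 1$, so $\|\phi_N\|_{B^{-s}_{p',r'}}\lesssim\|(\beta_j)\|_{\ell^{r'}}\le 1$ uniformly in $N$. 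Your handling of the endpoint exponents is also fine, since each $\Delta_j u$ is smooth and the norming identities for $L^1,L^\infty,\ell^1,\ell^\infty$ still hold in the sense needed.
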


We have the following product laws:
\begin{lemm}\label{product}\cite{BCD}
The following estimates hold:\\
(1) For any $s>0$ and any $(p,r)$ in $[1,\infty]^2$, the space $L^{\infty} \cap B^s_{p,r}$ is an algebra, and a constant $C=C(s,d)$ exists such that
$$ \|uv\|_{B^s_{p,r}}\leq C(\|u\|_{L^{\infty}}\|v\|_{B^s_{p,r}}+\|u\|_{B^s_{p,r}}\|v\|_{L^{\infty}}). $$
(2) If $1\leq p,r\leq \infty,\ s_1\leq s_2,\ s_2>\frac{d}{p}\ (s_2 \geq \frac{d}{p}\ \text{if}\ r=1)$ and $s_1+s_2>\max\{0, \frac{2d}{p}-d\}$, there exists $C=C(s_1,s_2,p,r,d)$ such that
$$ \|uv\|_{B^{s_1}_{p,r}}\leq C\|u\|_{B^{s_1}_{p,r}}\|v\|_{B^{s_2}_{p,r}}. $$
\end{lemm}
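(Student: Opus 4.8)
The plan is to prove both estimates from the Bony decomposition of Definition~\ref{Bony decomposition}, writing $uv=T_uv+T_vu+R(u,v)$ and controlling each piece via the dyadic norm of Definition~\ref{Besov Space} together with the Bernstein inequalities stated at the start of this section. Two structural facts from Proposition~\ref{prop0} will do the work: each block $S_{j-1}u\,\Delta_jv$ is spectrally supported in a fixed dilate of the annulus $\widetilde{\mathcal{C}}$ (so only finitely many of them reach a given frequency $2^k$), while each block $\Delta_ju\,\widetilde\Delta_jv$, with $\widetilde\Delta_j\triangleq\Delta_{j-1}+\Delta_j+\Delta_{j+1}$, is spectrally supported in a ball of radius $\sim 2^j$ (so these only reach frequencies $2^k$ with $k\leq j+N_0$).

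The core of (1) is the $L^\infty$ paraproduct bound $\|T_uv\|_{B^s_{p,r}}\lesssim\|u\|_{L^\infty}\|v\|_{B^s_{p,r}}$, valid for every $s\in\mathbb{R}$: from $\|\Delta_k(T_uv)\|_{L^p}\lesssim\sum_{|j-k|\leq N_1}\|S_{j-1}u\|_{L^\infty}\|\Delta_jv\|_{L^p}\lesssim\|u\|_{L^\infty}\sum_{|j-k|\leq N_1}\|\Delta_jv\|_{L^p}$ (using the uniform $L^\infty$-boundedness of $S_{j-1}$), multiplying by $2^{ks}$ and taking the $\ell^r$ norm in $k$ gives it, and $T_vu$ is symmetric. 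For the remainder, regroup $R(u,v)=\sum_j\Delta_ju\,\widetilde\Delta_jv$; the ball support gives
$$
2^{ks}\|\Delta_kR(u,v)\|_{L^p}\lesssim\|u\|_{L^\infty}\sum_{\ell\geq-N_0}2^{-\ell s}\,\bigl(2^{(k+\ell)s}\|\widetilde\Delta_{k+\ell}v\|_{L^p}\bigr),
$$
and for $s>0$ the weight $(2^{-\ell s}\mathbf{1}_{\ell\geq-N_0})_\ell$ lies in $\ell^1$, so Young's inequality for series yields $\|R(u,v)\|_{B^s_{p,r}}\lesssim\|u\|_{L^\infty}\|v\|_{B^s_{p,r}}$. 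Summing the three contributions and symmetrizing proves (1), with the $L^\infty$ norms kept explicit since no embedding into $L^\infty$ is ever invoked.

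For (2) I split $uv=T_vu+T_uv+R(u,v)$ with $u\in B^{s_1}_{p,r}$, $v\in B^{s_2}_{p,r}$. The hypothesis on $s_2$ gives $B^{s_2}_{p,r}\hookrightarrow L^\infty$ (Proposition~\ref{prop1}(4)), so $T_vu$ is handled by the bound above with the rough factor $v$ placed in $L^\infty$. For $T_uv$ I would estimate $\|S_{j-1}u\|_{L^\infty}\lesssim\sum_{j'\leq j-2}2^{j'(d/p-s_1)}\bigl(2^{j's_1}\|\Delta_{j'}u\|_{L^p}\bigr)$ by Bernstein, and then — according to the sign of $d/p-s_1$, and, when $s_1>d/p$, using $B^{s_2}_{p,r}\hookrightarrow B^{s_1}_{p,r}$ from Proposition~\ref{prop1}(3) together with $s_1\leq s_2$ — sum into $\|u\|_{B^{s_1}_{p,r}}\|v\|_{B^{s_2}_{p,r}}$. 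For $R(u,v)$ I would rerun the ball-support computation of (1), but now a Bernstein cost appears in bringing the low-frequency block $\Delta_k(\Delta_ju\,\widetilde\Delta_jv)$ back to $L^p$: for $p\geq 2$, control the product in $L^{p/2}$ by $\|\Delta_ju\|_{L^p}\|\widetilde\Delta_jv\|_{L^p}$ and pay $2^{kd/p}$, so the $\ell$-series converges as soon as $s_1+s_2>0$; for $p\leq 2$, control it in $L^1$ by $\|\Delta_ju\|_{L^2}\|\widetilde\Delta_jv\|_{L^2}$ and two factors $2^{jd(1/p-1/2)}$, so the analogous series converges exactly when $s_1+s_2>\tfrac{2d}{p}-d$. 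Jointly this is $s_1+s_2>\max\{0,\tfrac{2d}{p}-d\}$, the hypothesis.

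I expect the real difficulty to be the bookkeeping in (2): the trichotomy on $d/p-s_1$ for $T_uv$, the need to absorb the (possibly non-summable) endpoint growth of $\sum_{j'\leq j-2}2^{j'(d/p-s_1)}(\cdot)$ against the strictly decaying prefactor $2^{k(d/p-s_2)}$ coming from $s_2\geq d/p$, and the realization that the threshold $\tfrac{2d}{p}-d$ — positive precisely when $p\leq 2$ — is forced by nothing but Bernstein's inequality once the intermediate Lebesgue exponent of the remainder ($L^{p/2}$ or $L^1$) departs from $p$. Once these $\ell^r$-valued geometric series are seen to converge, the remaining steps are routine, and the constraint $s_1\leq s_2$ enters only to measure every rough factor in the space of smaller index via the embeddings of Proposition~\ref{prop1}.
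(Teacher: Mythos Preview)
The paper does not supply its own proof of this lemma: it is stated with a citation to \cite{BCD} and used as a black box throughout. So there is nothing in the paper to compare your argument against.

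Your sketch is the standard route one finds in \cite{BCD}: Bony decomposition, the $L^\infty$ paraproduct bound for $T_uv$ and $T_vu$, and a Young-type convolution estimate for the remainder, with the Bernstein cost in (2) producing the threshold $\max\{0,\tfrac{2d}{p}-d\}$. The outline is correct, including your identification of where the hypotheses $s_2>d/p$ (or $s_2\geq d/p$ with $r=1$) and $s_1\leq s_2$ are actually consumed. One small point worth tightening if you write it out fully: in the remainder estimate for (2), after Hölder you have the product sequence $\bigl(2^{js_1}\|\Delta_ju\|_{L^p}\bigr)\bigl(2^{js_2}\|\widetilde\Delta_jv\|_{L^p}\bigr)$, which a priori lies only in $\ell^{r/2}$, not $\ell^r$; the clean fix is to bound one factor in $\ell^\infty$ (i.e.\ $\|v\|_{B^{s_2}_{p,\infty}}\leq\|v\|_{B^{s_2}_{p,r}}$) before applying Young's inequality in $\ell^r$. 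With that adjustment the argument goes through exactly as you describe.
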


\begin{lemm}\cite{BCD}\label{commutator}
Let $\theta\in C^1(\mathbb{R})$ such that $(1+|\cdot|)\widehat{\theta}\in L^1$. There exists a constant $C>0$ such that for any Lipschitz function $f$ with gradient in $L^p$ and any $g\in L^q$, we have, for any $\lambda>0$,
$$
\|[\theta(\lambda^{-1}D),f]g\|_{L^r}\leq C\lambda^{-1}\|\partial_{x} f\|_{L^p}\|g\|_{L^q}\quad with\quad\frac{1}{p}+\frac{1}{q}=\frac{1}{r}.
$$
\end{lemm}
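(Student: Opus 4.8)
The plan is to realize the Fourier multiplier $\theta(\lambda^{-1}D)$ as a convolution operator and thereby rewrite the commutator as an explicit integral operator whose kernel carries the gain $\lambda^{-1}$. I would set $h:=\mathcal{F}^{-1}\theta$, so that $\theta(\lambda^{-1}D)$ acts by convolution with the kernel $k_\lambda(x)=\lambda\,h(\lambda x)$. Since $\widehat\theta$ and $h$ differ only by the reflection $\xi\mapsto-\xi$ and a normalizing constant, the hypothesis $(1+|\cdot|)\widehat\theta\in L^1$ is equivalent to $h\in L^1$ together with $|\cdot|\,h\in L^1$; in particular $\sup_{\lambda>0}\|k_\lambda\|_{L^1}<\infty$, so $\theta(\lambda^{-1}D):L^q\to L^q$ is bounded by Young's inequality and all integrals below converge absolutely. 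Expanding the two terms of the commutator with this common kernel gives the pointwise identity
$$[\theta(\lambda^{-1}D),f]g(x)=\int_{\mathbb{R}}k_\lambda(x-y)\bigl(f(y)-f(x)\bigr)g(y)\,dy.$$

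Next I would exploit the Lipschitz regularity of $f$: from $f(x-z)-f(x)=-z\int_0^1 (\partial_x f)(x-tz)\,dt$ and the change of variable $z=x-y$, the commutator becomes
$$[\theta(\lambda^{-1}D),f]g(x)=-\int_0^1\!\!\int_{\mathbb{R}} z\,k_\lambda(z)\,(\partial_x f)(x-tz)\,g(x-z)\,dz\,dt.$$
Applying Minkowski's integral inequality in $L^r$, then Hölder's inequality with $\tfrac1p+\tfrac1q=\tfrac1r$ and the translation invariance of Lebesgue norms, yields
$$\|[\theta(\lambda^{-1}D),f]g\|_{L^r}\le\|\partial_x f\|_{L^p}\,\|g\|_{L^q}\int_{\mathbb{R}}|z|\,|k_\lambda(z)|\,dz,$$
and the rescaling $w=\lambda z$ gives $\int_{\mathbb{R}}|z|\,|k_\lambda(z)|\,dz=\lambda^{-1}\,\||\cdot|\,h\|_{L^1}=:C\lambda^{-1}<\infty$, which is the claimed estimate.

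The argument is largely bookkeeping once the convolution representation is in place; the points deserving a little care are (i) the passage from the symbol condition $(1+|\cdot|)\widehat\theta\in L^1$ to the weighted integrability $|\cdot|\,h\in L^1$ of the physical-space kernel, and (ii) the use of Fubini and Minkowski to interchange the $z$- and $t$-integrals with one another and with the $L^r$-norm. Point (ii) is legitimate because, by Hölder, $\|(\partial_x f)(\cdot-tz)\,g(\cdot-z)\|_{L^r}\le\|\partial_x f\|_{L^p}\|g\|_{L^q}$ uniformly in $t$ and $z$, while $\int_{\mathbb{R}}|z|\,|k_\lambda(z)|\,dz<\infty$ by (i); no density argument is needed, so the endpoint exponents $p=\infty$ or $q=\infty$ cause no difficulty.
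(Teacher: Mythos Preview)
Your argument is correct and is precisely the standard proof of this commutator estimate as given in \cite{BCD} (see Lemma~2.97 there), which the paper cites without reproducing. The paper does not supply its own proof of this lemma, so there is nothing further to compare.
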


Here is the Gronwall lemma.
\begin{lemm}\label{osgood}\cite{BCD}
Let $f(t), g(t)\in C^{1}([0,T])$ and $f(t), g(t)>0.$ Let $h(t)$ be a continuous function on $[0,T].$ Assume that, for all $t\in [0,T]$,
$$\frac 1 2 \frac{d}{dt}f^{2}(t)\leq h(t)f^{2}(t)+g(t)f(t).$$
Then for any time $t\in [0,T],$ we have
$$f(t)\leq f(0)\exp\int_0^t h(\tau)d\tau+\int_0^t g(\tau)\exp\left(\int_\tau ^t h(t')dt'\right)d\tau.$$
\end{lemm}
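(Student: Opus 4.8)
The plan is to reduce the quadratic differential inequality to a linear one and then run the classical integrating-factor argument. First I would note that since $f\in C^1([0,T])$ and $f(t)>0$, the function $f^2$ is $C^1$ with $\frac{d}{dt}f^2(t)=2f(t)f'(t)$; hence the hypothesis $\frac12\frac{d}{dt}f^2(t)\le h(t)f^2(t)+g(t)f(t)$ becomes $f(t)f'(t)\le h(t)f^2(t)+g(t)f(t)$. Dividing through by $f(t)>0$ yields the linear differential inequality $f'(t)\le h(t)f(t)+g(t)$ valid pointwise on all of $[0,T]$.

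Next I would introduce the integrating factor $E(t)=\exp\!\big(-\int_0^t h(\tau)\,d\tau\big)$, which is positive and $C^1$ because $h$ is continuous. Setting $F(t)=E(t)f(t)$, a direct computation together with $E(t)>0$ and the inequality just derived gives $F'(t)=E(t)\big(f'(t)-h(t)f(t)\big)\le E(t)g(t)$. Since $g$ is continuous, $Eg$ is integrable on $[0,T]$, so integrating from $0$ to $t$ gives $E(t)f(t)=F(t)\le F(0)+\int_0^t E(\tau)g(\tau)\,d\tau=f(0)+\int_0^t E(\tau)g(\tau)\,d\tau$.

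Finally I would multiply both sides by $E(t)^{-1}=\exp\!\big(\int_0^t h(\tau)\,d\tau\big)>0$ and use the identity $E(t)^{-1}E(\tau)=\exp\!\big(\int_\tau^t h(t')\,dt'\big)$ to arrive at
$$f(t)\le f(0)\exp\!\int_0^t h(\tau)\,d\tau+\int_0^t g(\tau)\exp\!\left(\int_\tau^t h(t')\,dt'\right)d\tau,$$
which is exactly the asserted bound. I do not expect any genuine obstacle here: the argument is elementary and the only point that actually uses the stated hypotheses rather than being a formality is the division by $f$, which is licit precisely because $f$ is assumed strictly positive, so the linear reduction is valid everywhere on $[0,T]$; and since the problem is posed on the fixed compact interval $[0,T]$, no maximal-interval or blow-up considerations enter.
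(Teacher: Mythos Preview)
Your proof is correct and is the standard integrating-factor argument. Note that the paper does not supply its own proof of this lemma---it is simply quoted from \cite{BCD}---so there is nothing to compare against; your argument is exactly the one that reference would give.
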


In the paper, we also need some results about the following transport equation:
\begin{equation}\label{transport}
	\left\{\begin{array}{ll}
		\partial_{t}f+v\partial_x f=g,&\ x\in\mathbb{R},\ t>0, \\
		f|_{t=0}=f_0,&\ x\in\mathbb{R}.
	\end{array}\right.
\end{equation}

\begin{lemm}\label{priori estimate}\cite{BCD,GUObushiding}
Let $1\leq p,r\leq\infty,\ s>-\min\{\frac{1}{p},\frac{1}{p'}\}$.There exists a constant $C$ such that for all solutions $f\in L^{\infty}([0,T];B^s_{p,r})$ of \eqref{transport} with initial data $f_0\in B^s_{p,r}$, and $g\in L^1([0,T];B^s_{p,r})$, we have, for a.e. $t\in[0,T]$,
$$ \|f(t)\|_{B^s_{p,r}}\leq \|f_0\|_{B^s_{p,r}}+\int_0^t \|g(t')\|_{B^s_{p,r}}dt'+C\int_0^t V^{\prime} (t')\|f(t')\|_{B^s_{p,r}}dt{'} $$
or
$$ \|f(t)\|_{B^s_{p,r}}\leq e^{CV(t)}\Big(\|f_0\|_{B^s_{p,r}}+\int_0^t e^{-CV(t')}\|g(t')\|_{B^s_{p,r}}dt'\Big) $$
with
\begin{equation*}
V^{\prime}(t)=\left\{
\begin{array}{ll}
     \|\partial_xv(t)\|_{B_{p,\infty}^{\frac{1}{p}}\cap L^\infty}, & \text{if} \ s<1+\frac{1}{p}, \\
     \|\partial_xv(t)\|_{B_{p,r}^s}, & \text{if}\ s=1+\frac{1}{p}\ and\ r>1, \\
     \|\partial_xv(t)\|_{B_{p,r}^{s-1}}, & \text{if}\ s>1+\frac{1}{p}\ or\ \left\{s=1+\frac{1}{p}\ and\ r=1\right\}.
\end{array}\right.
\end{equation*}
In particular, if $f=v$, then for all $s>0$, $V^{\prime}(t)=\|\partial_x  v(t)\|_{L^{\infty}}.$
\end{lemm}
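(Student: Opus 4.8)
The plan is to run the classical Littlewood--Paley--energy scheme for transport equations. First I would apply $\Delta_j$ to \eqref{transport}; writing the commutator $R_j\triangleq v\,\partial_x\Delta_j f-\Delta_j(v\,\partial_x f)$, the dyadic block $f_j\triangleq\Delta_j f$ then solves
\begin{equation*}
\partial_t f_j+v\,\partial_x f_j=\Delta_j g+R_j,\qquad f_j|_{t=0}=\Delta_j f_0 .
\end{equation*}
Next I would establish the basic $L^p$ bound for this scalar transport equation: for smooth data, multiplying by $|f_j|^{p-2}f_j$, integrating over $\mathbb{R}$, and integrating the convective term by parts produces the factor $\tfrac1p\|\partial_x v\|_{L^\infty}$ (in one space dimension $\mathrm{div}\,v=\partial_x v$), whence
\begin{equation*}
\|f_j(t)\|_{L^p}\leq\|\Delta_j f_0\|_{L^p}+\int_0^t\Big(\|\Delta_j g(\tau)\|_{L^p}+\|R_j(\tau)\|_{L^p}+C\|\partial_x v(\tau)\|_{L^\infty}\|f_j(\tau)\|_{L^p}\Big)\,d\tau .
\end{equation*}
For $p=\infty$ one argues along the characteristic flow of $v$ instead, and the general $(f_0,g,v)$ case follows by mollification and passage to the limit, using the Fatou property (Proposition~\ref{prop1}(6)).

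I would then multiply by $2^{js}$ and take the $\ell^r(\mathbb{Z})$ norm in $j$. The contributions of $\|\Delta_j f_0\|_{L^p}$ and $\|\Delta_j g\|_{L^p}$ reassemble into $\|f_0\|_{B^s_{p,r}}$ and $\|g(\tau)\|_{B^s_{p,r}}$, and the last term is dominated by $\|\partial_x v\|_{L^\infty}\|f\|_{B^s_{p,r}}\leq V'\|f\|_{B^s_{p,r}}$ in every regime. The heart of the argument is the commutator estimate
\begin{equation*}
\big\|\big(2^{js}\|R_j\|_{L^p}\big)_{j\in\mathbb{Z}}\big\|_{\ell^r}\leq C\,V'(t)\,\|f(t)\|_{B^s_{p,r}},
\end{equation*}
which is exactly where the hypotheses on $(s,p,r)$ enter. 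I would prove it by plugging the Bony decomposition of $v\,\partial_x f$ into $R_j$ and treating the three pieces $T_v\partial_x f$, $T_{\partial_x f}v$ and $R(v,\partial_x f)$ separately, using the spectral localisation of Proposition~\ref{prop0} together with Bernstein's inequalities (Proposition~\ref{Bernstein’s inequalities}). The piece in which low frequencies of $v$ meet one dyadic block of $f$ carries a genuine commutator structure and is handled by Lemma~\ref{commutator}, which yields the one-derivative gain and the factor $\|\partial_x v\|$; the pieces in which $f$ is at low frequency require $s$ to be large enough, and produce the split $s<1+\tfrac1p$ versus $s\geq 1+\tfrac1p$ in the definition of $V'$; the remainder $R(v,\partial_x f)$ converges precisely under $s>-\min\{\tfrac1p,\tfrac1{p'}\}$. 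In the special case $f=v$, the paraproduct $T_{\partial_x v}v+R(v,\partial_x v)$ is directly controlled by $\|\partial_x v\|_{L^\infty}\|v\|_{B^s_{p,r}}$ for all $s>0$ by the product law (Lemma~\ref{product}(1)), which is why no restriction beyond $s>0$ is needed there.

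Assembling these estimates gives, for a.e.\ $t\in[0,T]$,
\begin{equation*}
\|f(t)\|_{B^s_{p,r}}\leq\|f_0\|_{B^s_{p,r}}+\int_0^t\|g(\tau)\|_{B^s_{p,r}}\,d\tau+C\int_0^t V'(\tau)\,\|f(\tau)\|_{B^s_{p,r}}\,d\tau,
\end{equation*}
which is the first assertion; the exponential form then follows from the Gronwall lemma (Lemma~\ref{osgood}), or directly from its integral version after a routine regularisation to justify differentiating $\|f(t)\|_{B^s_{p,r}}$. \textbf{The main obstacle is the commutator estimate}: tracking all admissible ranges of $s$ together with the three formulas for $V'$ requires a careful case-by-case paraproduct analysis, and the endpoint situations ($s=1+\tfrac1p$ with $r=1$ or $r>1$, and $p\in\{1,\infty\}$) are the most delicate; the full details of this estimate are in \cite{BCD}.
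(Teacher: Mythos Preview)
Your outline is the standard Littlewood--Paley/commutator proof from \cite{BCD} (Theorem~3.14 and Lemma~2.100 there), and it is correct. Note, however, that the paper does not give its own proof of this lemma: it is simply quoted from \cite{BCD,GUObushiding} as a known a~priori estimate for transport equations, so there is nothing in the paper to compare against beyond the citation. Your sketch matches the cited source.
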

	
\begin{lemm}\label{existence}\cite{BCD,GUObushiding}
Let $1\leq p, r\leq\infty,\ s> -\min\{\frac 1 p, \frac 1 {p'}\}$. Suppose $f_0\in B^s_{p,r}$, $g\in L^1([0,T];B^s_{p,r})$. Let $v$ be a time-dependent vector field such that $v\in L^\rho([0,T];B^{-M}_{\infty,\infty})$ for some $\rho>1$ and $M>0$, and
	$$
	\begin{array}{ll}
		\partial_x v\in L^1([0,T];B^{\frac 1 p}_{p,\infty}\cap L^\infty), &\ \text{if}\ s<1+\frac 1 p, \\
		\partial_x v\in L^1([0,T];B^{s}_{p,r}), &\ \text{if}\ s=1+\frac 1 p,\ r>1, \\
		\partial_x v\in L^1([0,T];B^{s-1}_{p,r}), &\ \text{if}\ s>1+\frac 1 {p}\ or\ \left\{s=1+\frac{1}{p}\ and\ r=1\right\}.
	\end{array}
	$$
	Then the equation \eqref{transport} has a unique solution $f$ in \\
	-the space $\mathcal{C}([0,T];B^s_{p,r})$, if $r<\infty$; \\
	-the space $\Big(\underset{s'<s}\bigcap \mathcal{C}([0,T];B^{s'}_{p,\infty})\Big)\bigcap \mathcal{C}_w([0,T];B^s_{p,\infty})$, if $r=\infty$.
\end{lemm}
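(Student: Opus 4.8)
The plan is to follow the classical regularization scheme for linear transport equations in Besov spaces, with the a priori estimates of Lemma~\ref{priori estimate} as the main tool. Uniqueness is immediate: if $f_1,f_2$ are two solutions in the stated solution space with the same data, then $f_1-f_2$ solves \eqref{transport} with zero source and zero initial datum and still lies in $L^\infty([0,T];B^s_{p,r})$; since $s>-\min\{1/p,1/p'\}$ and the hypotheses on $\partial_x v$ make the quantity $V'(t)$ of Lemma~\ref{priori estimate} integrable on $[0,T]$, the second estimate of that lemma forces $\|f_1(t)-f_2(t)\|_{B^s_{p,r}}\le e^{CV(t)}\cdot 0=0$ for a.e.\ $t\in[0,T]$.

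For existence I would regularize, setting $v^n:=S_n v$, $f_0^n:=S_n f_0$, $g^n:=S_n g$ and solving $\partial_t f^n+v^n\partial_x f^n=g^n$ with $f^n|_{t=0}=f_0^n$; this problem has a smooth global solution $f^n$ obtained from the flow of the Lipschitz field $v^n$ (method of characteristics). Since $S_n$ is bounded on every $B^\sigma_{p,q}$ and on $L^\infty$ uniformly in $n$, the assumptions on $\partial_x v$ are inherited by $\partial_x v^n$ with bounds independent of $n$, so Lemma~\ref{priori estimate} at regularity $s$ yields $\|f^n\|_{L^\infty([0,T];B^s_{p,r})}\le C(\|f_0\|_{B^s_{p,r}}+\int_0^T\|g(t')\|_{B^s_{p,r}}\,dt')$ with $C$ independent of $n$.

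Next I would prove that $(f^n)_n$ converges. The difference $f^n-f^m$ solves \eqref{transport} with field $v^n$, initial datum $f_0^n-f_0^m$, and source $(v^m-v^n)\partial_x f^m+(g^n-g^m)$; estimating this source in $B^{s-1}_{p,r}$ by the product laws of Lemma~\ref{product} (and working in $B^{s'}_{p,\infty}$ with $s'<s$ instead in the range $s\le 1-\min\{1/p,1/p'\}$, where $B^{s-1}_{p,r}$ is no longer admissible), using the uniform bound just obtained together with $S_n v\to v$, $S_n f_0\to f_0$, $S_n g\to g$ in the appropriate norms, and applying Lemma~\ref{priori estimate} at this lower regularity, one finds that $(f^n)_n$ is Cauchy in $C([0,T];B^{s-1}_{p,r})$ (resp.\ in $C([0,T];B^{s'}_{p,\infty})$). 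Call the limit $f$. The Fatou property (Proposition~\ref{prop1}(6)) upgrades the uniform $B^s_{p,r}$ bound to $f\in L^\infty([0,T];B^s_{p,r})$, and passing to the limit in the distributional formulation of \eqref{transport} --- the products $v^n\partial_x f^n$ converging in $\mathcal{S}'$ since $v^n$ is uniformly bounded with the Lipschitz control $\partial_x v^n\in L^1([0,T];L^\infty)$ and $\partial_x f^n\to\partial_x f$ in $C([0,T];B^{s-2}_{p,r})$, the assumption $v\in L^\rho([0,T];B^{-M}_{\infty,\infty})$ ensuring the equation is meaningfully posed for the full coefficient $v$ --- shows that $f$ solves \eqref{transport}. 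Finally, from $\partial_t f=g-v\partial_x f\in L^1([0,T];B^{s-1}_{p,r})$ we get $f\in C([0,T];B^{s-1}_{p,r})$; when $r<\infty$ this is bootstrapped to $f\in C([0,T];B^s_{p,r})$ by localizing, since each $\Delta_j f$ solves a transport equation with source $\Delta_j g+[\Delta_j,v]\partial_x f$, the commutator being controlled in $L^p$ by Lemma~\ref{commutator}, so that $t\mapsto 2^{js}\|\Delta_j f(t)\|_{L^p}$ is continuous while the first (integral) form of Lemma~\ref{priori estimate} makes the $\ell^r$ tail $\big(2^{js}\|\Delta_j f(t)\|_{L^p}\big)_{j\ge N}$ uniformly small in $t$; for $r=\infty$ one only obtains $f\in C_w([0,T];B^s_{p,\infty})\cap\bigcap_{s'<s}C([0,T];B^{s'}_{p,\infty})$.

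I expect the main obstacle to be the endpoint time-continuity $f\in C([0,T];B^s_{p,r})$: unlike continuity at level $s-1$, which follows for free from the equation, it genuinely requires the Littlewood--Paley localization, the commutator estimate, and the sharp (integral) form of the transport inequality. A secondary point requiring care is the role of the hypothesis $v\in L^\rho([0,T];B^{-M}_{\infty,\infty})$: it is present only to give the transport equation a well-defined meaning at the level of the full coefficient $v$ and to legitimize the limiting procedure, and it must be used together with the \emph{strong} (not merely weak) convergence of $(f^n)_n$ at regularity $s-1$. The borderline indices $s=1+\frac1p$ (the cases $r=1$ and $r>1$) are handled by the matching choice of $V'$ in Lemma~\ref{priori estimate} and cause no genuine difficulty.
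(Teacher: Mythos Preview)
The paper does not supply its own proof of this lemma; it is quoted from the references \cite{BCD,GUObushiding} (note the citation attached to the lemma header). Your sketch is precisely the standard regularization argument of those references (cf.\ Theorem~3.14 in \cite{BCD}): smooth the coefficient and data, use Lemma~\ref{priori estimate} for uniform $B^s_{p,r}$ bounds and for the Cauchy property at regularity $s-1$ (or $s'<s$ when $s-1$ falls below the threshold of Lemma~\ref{priori estimate}), pass to the limit via Fatou, and recover time continuity at level $s$ through the $\Delta_j$-localized equations and commutator control. You have also correctly isolated the two points requiring care---the endpoint continuity $f\in C([0,T];B^s_{p,r})$ and the role of $v\in L^\rho([0,T];B^{-M}_{\infty,\infty})$---so there is nothing of substance to add.
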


\begin{lemm}\label{zp19}\cite{LiJinLuzp19}
Let $1\leq p\leq\infty,\ 1\leq r<\infty,\ s>1+\frac 1 p\ (or\ s=1+\frac 1 p,\ 1\leq p<\infty,\ r=1)$. Let $\overline{\mathbb{N}}=\mathbb{N}\cup\{\infty\}$. For $n\in\overline{\mathbb{N}}$, assume that $f^n\in \mathcal{C}([0,T];B^{s-1}_{p,r})$ is the solution to
\begin{equation*}
   	\left\{\begin{array}{l}
   		\partial_t f^n+a^n\partial_x f^n=g, \\
   		f^n|_{t=0}=f_0
 	\end{array}\right.
\end{equation*}
with $f_0\in B^{s-1}_{p,r},\ g\in L^1([0,T];B^{s-1}_{p,r})$. In addition, suppose that for some $\alpha\in L^1([0,T])$, 
$$\sup\limits_{n\in\overline{\mathbb{N}}}\|a^n(t)\|_{B^{s}_{p,r}}\leq \alpha(t).$$
If $a^n$ converges to $a^{\infty}$ in $L^1([0,T];B^{s-1}_{p,r})$, then $f^n$ converges to $f^{\infty}$ in $\mathcal{C}([0,T];B^{s-1}_{p,r})$.
\end{lemm}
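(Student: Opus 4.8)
The plan is to deduce the convergence from the transport a priori estimates of Lemma~\ref{priori estimate} via a Bona--Smith type regularization of the data and the source, carrying out a double limit in the correct order. As a first observation, applying Lemma~\ref{priori estimate} to each $f^n$ in $B^{s-1}_{p,r}$ (with velocity $a^n$) and checking that in every case allowed by the hypotheses the factor $V'(t)$ is $\lesssim\|a^n(t)\|_{B^s_{p,r}}\le\alpha(t)$, one obtains a bound $\sup_{n\in\overline{\mathbb N}}\|f^n\|_{L^\infty([0,T];B^{s-1}_{p,r})}\le K$ depending only on $\|f_0\|_{B^{s-1}_{p,r}}$, $\|g\|_{L^1([0,T];B^{s-1}_{p,r})}$ and $\|\alpha\|_{L^1([0,T])}$.

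Next I would regularize. Let $S_N$ be the low-frequency cut-off, set $f_0^N:=S_Nf_0$ and $g^N:=S_Ng$, and for each $n\in\overline{\mathbb N}$ let $f^n_N$ solve $\partial_tf^n_N+a^n\partial_xf^n_N=g^N$ with $f^n_N|_{t=0}=f_0^N$, i.e. with the \emph{same} velocity $a^n$. By Lemma~\ref{existence} one has $f^n_N\in\mathcal C([0,T];B^s_{p,r})$, and the $B^s$-version of Lemma~\ref{priori estimate} gives $\sup_{n\in\overline{\mathbb N}}\|f^n_N\|_{L^\infty([0,T];B^s_{p,r})}\le C_N$ with $C_N\lesssim 2^N K$, a bound uniform in $n$ for each fixed $N$. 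Since $f^n_N$ and $f^n$ share the same velocity, the difference $w^n_N:=f^n-f^n_N$ solves a transport equation with velocity $a^n$, source $(I-S_N)g$ and datum $(I-S_N)f_0$, so Lemma~\ref{priori estimate} in $B^{s-1}_{p,r}$ yields
$\|w^n_N\|_{L^\infty([0,T];B^{s-1}_{p,r})}\le e^{C\|\alpha\|_{L^1([0,T])}}\big(\|(I-S_N)f_0\|_{B^{s-1}_{p,r}}+\|(I-S_N)g\|_{L^1([0,T];B^{s-1}_{p,r})}\big)=:\mu_N$,
a quantity independent of $n$; because $r<\infty$, Proposition~\ref{prop1}(2) together with dominated convergence gives $\mu_N\to0$ as $N\to\infty$.

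For a \emph{fixed} $N$ I would then let $n\to\infty$. The difference $h^n_N:=f^n_N-f^\infty_N$ solves $\partial_th^n_N+a^n\partial_xh^n_N=(a^\infty-a^n)\partial_xf^\infty_N$ with zero datum, and this is precisely where the regularization pays off: $\partial_xf^\infty_N\in\mathcal C([0,T];B^{s-1}_{p,r})$ with norm $\lesssim C_N$, while in every admissible case $B^{s-1}_{p,r}$ is an algebra (it embeds in $L^\infty$ by Proposition~\ref{prop1}(4), and $s-1>0$, so Lemma~\ref{product}(1) applies), whence $\|(a^\infty-a^n)(\tau)\,\partial_xf^\infty_N(\tau)\|_{B^{s-1}_{p,r}}\lesssim C_N\|(a^\infty-a^n)(\tau)\|_{B^{s-1}_{p,r}}$. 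Feeding this into Lemma~\ref{priori estimate} in $B^{s-1}_{p,r}$ gives $\|h^n_N\|_{L^\infty([0,T];B^{s-1}_{p,r})}\lesssim C_Ne^{C\|\alpha\|_{L^1([0,T])}}\int_0^T\|(a^\infty-a^n)(\tau)\|_{B^{s-1}_{p,r}}\,d\tau\to0$ as $n\to\infty$, by the assumed convergence $a^n\to a^\infty$ in $L^1([0,T];B^{s-1}_{p,r})$.

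Combining the estimates, $\|f^n-f^\infty\|_{L^\infty([0,T];B^{s-1}_{p,r})}\le\|w^n_N\|_{L^\infty([0,T];B^{s-1}_{p,r})}+\|h^n_N\|_{L^\infty([0,T];B^{s-1}_{p,r})}+\|w^\infty_N\|_{L^\infty([0,T];B^{s-1}_{p,r})}\le 2\mu_N+\|h^n_N\|_{L^\infty([0,T];B^{s-1}_{p,r})}$; given $\varepsilon>0$ one first chooses $N$ with $2\mu_N<\varepsilon/2$ and then, $N$ being fixed, $n$ so large that $\|h^n_N\|_{L^\infty([0,T];B^{s-1}_{p,r})}<\varepsilon/2$, which proves $f^n\to f^\infty$ in $\mathcal C([0,T];B^{s-1}_{p,r})$. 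I expect the main obstacle to be exactly what forces this detour: the naive difference equation for $f^n-f^\infty$ carries the source $(a^\infty-a^n)\partial_xf^\infty$, and since $\partial_xf^\infty$ only belongs to $B^{s-2}_{p,r}$ this product need not lie in $B^{s-1}_{p,r}$, so Lemma~\ref{priori estimate} cannot be applied to $f^n-f^\infty$ directly; routing through the smoothed solutions $f^n_N$ — for which $\partial_xf^\infty_N\in B^{s-1}_{p,r}$ — and taking the limits in the order $n\to\infty$ then $N\to\infty$ is what resolves it. The hypothesis $r<\infty$ enters precisely through $\mu_N\to0$, and $\sup_{n}\|a^n\|_{B^s_{p,r}}\le\alpha$ is used both for the uniform-in-$n$ bound $C_N$ and to control the $V'$-factors in all the transport estimates.
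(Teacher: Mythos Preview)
The paper does not supply its own proof of this lemma: it is quoted verbatim from \cite{LiJinLuzp19} and used as a black box in the continuous-dependence step of Theorem~\ref{the1}. There is therefore nothing in the present paper to compare your argument against.

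That said, your Bona--Smith style argument is correct and is in fact the standard route to this type of result. Every step checks out under the stated hypotheses: the $V'$-factors in Lemma~\ref{priori estimate} applied at level $s-1$ are indeed dominated by $\|a^n(t)\|_{B^s_{p,r}}\le\alpha(t)$ in each of the three regimes; the regularized solutions $f^n_N$ sit in $\mathcal C([0,T];B^s_{p,r})$ with a bound $C_N\lesssim 2^N K$ uniform in $n$; the tail $\mu_N\to0$ uses exactly the hypothesis $r<\infty$ via Proposition~\ref{prop1}(2); and the algebra property of $B^{s-1}_{p,r}$ needed for the source $(a^\infty-a^n)\partial_xf^\infty_N$ holds in both the case $s>1+\frac1p$ and the endpoint $s=1+\frac1p,\ r=1,\ p<\infty$ (Lemma~\ref{product} together with Proposition~\ref{prop1}(4)). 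Your closing paragraph correctly identifies the obstruction that forces the regularization: $\partial_xf^\infty$ lies only in $B^{s-2}_{p,r}$, so the direct difference equation cannot be estimated in $B^{s-1}_{p,r}$.

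One minor technical remark: to invoke Lemma~\ref{existence} for the auxiliary $f^n_N$ you need the mild condition $a^n\in L^\rho([0,T];B^{-M}_{\infty,\infty})$ for some $\rho>1$, whereas the hypothesis only gives $\alpha\in L^1$. This is not a genuine gap --- in applications (as in the paper's Step~3) the velocities are in $L^\infty_T$, and in general the existence of $f^n_N$ can be obtained by the standard smoothing-in-time/compactness construction since the data and source are spectrally localized --- but it is worth a line if you want the argument to be self-contained.
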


We have the following estimate for $F(u)$ in \eqref{eq1}:
\begin{lemm}\label{Fu}
Suppose that $1\leq p,r\leq\infty,$
$$
\left\{\begin{array}{l}
s>\max\{\frac{5}{2},2+\frac{1}{p}\}\ or\ \{ s=2+\frac{1}{p},\ 1\leq p<\infty\ and\ r=1\}\quad \text{if} \quad b \neq \frac{5}{3}, \\
s>\max\{\frac{3}{2},1+\frac{1}{p}\}\ or\ \{ s=1+\frac{1}{p},\ 1\leq p<\infty\ and\ r=1\}\quad \text{if} \quad b = \frac{5}{3}.
\end{array}\right.
$$
Then $F$, given in \eqref{eq1}\eqref{eqF}, satisfies $F:B^s_{p,r}\to B^s_{p,r}$ and
$$\|F(u)\|_{B^{s}_{p,r}}\leq C\|u\|^2_{B^{s}_{p,r}}. $$
\end{lemm}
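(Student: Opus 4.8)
\noindent\emph{Proof plan.}\enspace The plan is to estimate $F_1,\dots,F_4$ one at a time, exploiting that each is a Fourier multiplier of \emph{negative} order applied to a perfect square $(\partial_x^j u)^2$ with $j\in\{0,1,2\}$. First I would record that the Fourier symbol of $P(D)$ is $[(1+\alpha^2\xi^2)(1+\beta^2\xi^2)]^{-1}$, so (recall $\alpha\beta\neq 0$) $P(D)$ is an $S^{-4}$-multiplier; hence $\partial_x P(D)$ is an $S^{-3}$-multiplier and $\partial_x^3 P(D)$ is an $S^{-1}$-multiplier, and by Proposition~\ref{prop1}(7) they are bounded from $B^{s-3}_{p,r}$ and $B^{s-1}_{p,r}$, respectively, into $B^s_{p,r}$. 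From the explicit formulas in \eqref{eqF}, using that the coefficient $\tfrac{5-3b}{2}$ of $F_3$ vanishes exactly when $b=\tfrac53$ together with $\|u_x^2\|_{B^{s-3}_{p,r}}\lesssim\|u_x^2\|_{B^{s-1}_{p,r}}$, this reduces the lemma to the three bounds
$$
\|u^2\|_{B^{s-3}_{p,r}}\le C\|u\|_{B^s_{p,r}}^2,\qquad
\|u_x^2\|_{B^{s-1}_{p,r}}\le C\|u\|_{B^s_{p,r}}^2,\qquad
\|u_{xx}^2\|_{B^{s-3}_{p,r}}\le C\|u\|_{B^s_{p,r}}^2,
$$
of which the third is needed only when $b\neq\tfrac53$.

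Next I would treat the three squares uniformly through the algebra estimate of Lemma~\ref{product}(1). The point is that for every index $j$ that actually occurs ($j=0,1$ in all cases, $j=2$ only when $b\neq\tfrac53$) the hypotheses on $(s,p,r)$ force $s-j>0$ and $B^{s-j}_{p,r}\hookrightarrow L^\infty$ (Proposition~\ref{prop1}(4)), so that $L^\infty\cap B^{s-j}_{p,r}=B^{s-j}_{p,r}$ is an algebra; combining this with Proposition~\ref{prop1}(7) for the $S^{j}$-multiplier $\partial_x^j$ gives
$$
\|(\partial_x^j u)^2\|_{B^{s-j}_{p,r}}\le C\,\|\partial_x^j u\|_{B^{s-j}_{p,r}}^{2}\le C\,\|u\|_{B^s_{p,r}}^{2}.
$$
For $F_4$ ($j=1$, target space $B^{s-1}_{p,r}$) this is already the claimed estimate; for $F_1,F_2,F_3$ (target space $B^{s-3}_{p,r}$ and $j\le 2$) I would compose it with the elementary embedding $B^{s-j}_{p,r}\hookrightarrow B^{s-3}_{p,r}$. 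Summing over $i=1,\dots,4$ then yields $\|F(u)\|_{B^s_{p,r}}\le C\|u\|_{B^s_{p,r}}^2$, which also shows $F(u)\in B^s_{p,r}$, i.e. $F$ maps $B^s_{p,r}$ into itself.

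The one point I expect to need care is the borderline regularity, and it is exactly what pins down the endpoint hypotheses. When $b\neq\tfrac53$ the highest-order factor is $u_{xx}$, which at $s=2+\tfrac1p$ lies precisely in $B^{1/p}_{p,1}$; the algebra estimate is still available there, but only because the smoothness index $\tfrac1p$ is positive (hence $p<\infty$) and $B^{1/p}_{p,1}\hookrightarrow L^\infty$ (hence $r=1$, by Proposition~\ref{prop1}(4)). The case $b=\tfrac53$ is identical, with $u_x\in B^{1/p}_{p,1}$ at $s=1+\tfrac1p$. Apart from checking these endpoints the proof is routine; in particular no commutator or Bony-decomposition estimate is needed, since in every term of \eqref{eqF} the smoothing of $P(D)$ already absorbs all the derivatives hitting $u$. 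This is also the source of the two thresholds: if $F_3$ is present ($b\neq\tfrac53$) the nonlinearity costs two derivatives on a factor, so one needs essentially $u\in C^2$, i.e. $s>\max\{2+\tfrac1p,\tfrac52\}$, whereas if $F_3$ is absent ($b=\tfrac53$) it costs only one, so $u\in C^1$, i.e. $s>\max\{1+\tfrac1p,\tfrac32\}$, already suffices.
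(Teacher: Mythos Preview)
Your proof is correct and follows exactly the approach the paper indicates: the paper's own proof simply states that ``the Lemma can be easily proved by using Proposition~\ref{prop1} and Lemma~\ref{product}'' and omits the details, and you have supplied precisely those details (multiplier bounds from Proposition~\ref{prop1}(7), the algebra property from Lemma~\ref{product}(1), and the embedding from Proposition~\ref{prop1}(4), with the correct endpoint check at $s-j=\tfrac1p$, $r=1$).
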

\begin{proof}
The Lemma can be easily proved by using Proposition \ref{prop1} and Lemma \ref{product}, so we omit the details here.
\end{proof}

\section{Local well-posedness}\label{LOcal}
In this section, we will establish the local well-posedness for the Cauchy problem \eqref{eq1} in certain Besov spaces. Before stating our results, we give the following definition:
\begin{defi}\label{espr}
For $s\in\mathbb{R},\ T>0,\ 1\leq p,r\leq\infty$,
$$E_{p,r}^{s}(T)\triangleq 
\left\{\begin{array}{l}
\mathcal{C}([0,T];B_{p,r}^{s}) \cap \mathcal{C}^{1}([0,T];B_{p,r}^{s-1}) \quad \text{if} \quad r < \infty, \\
\mathcal{C}_{w}(0,T;B_{p,\infty}^{s}) \cap \mathcal{C}^{0,1}([0,T];B_{p,\infty}^{s-1})\quad \text{if} \quad r =\infty.
\end{array}\right.
$$
\end{defi}

Firstly, we show the following theorem.
\begin{theo}\label{the1}
Let $u_{0}\in B_{p,r}^{s}$ with $1\leq p,r\leq\infty,$
\begin{align}\label{s}
\left\{\begin{array}{l}
	s>\max\{\frac{5}{2},2+\frac{1}{p}\} \quad \text{if} \quad b \neq \frac{5}{3}, \\
	s>\max\{\frac{3}{2},1+\frac{1}{p}\} \quad \text{if} \quad b = \frac{5}{3}.
\end{array}\right.
\end{align}
There exists a time $T>0$ such that\ \eqref{eq1} has a unique solution $u \in E_{p,r}^{s}(T)$. If $r<\infty\ (resp.,\ r=\infty)$, then the solution map is continuous from $B^{s}_{p,r}$ to $E^{s}_{p,r}(T)\ (resp.,\ \mathcal{C}_w([0,T];B^s_{p,\infty}))$.
\end{theo}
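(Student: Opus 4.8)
The plan is to follow the classical Friedrichs-regularization / iterative-approximation scheme for quasilinear transport equations of Camassa--Holm type, as in \cite{Danchin2001,LiYin2016}, using the product estimate of Lemma \ref{product}, the transport estimates of Lemma \ref{priori estimate}, the existence statement of Lemma \ref{existence}, and crucially the nonlinear estimate $\|F(u)\|_{B^s_{p,r}}\leq C\|u\|^2_{B^s_{p,r}}$ from Lemma \ref{Fu}, which is exactly what allows the index threshold to be lowered from $\max\{3+\frac1p,\frac72\}$ to $\max\{2+\frac1p,\frac52\}$ (resp.\ $\max\{1+\frac1p,\frac32\}$ when $b=\frac53$). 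I would carry this out in five steps.

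\emph{Step 1 (Approximate solutions).} Define $u^0:=0$ and, inductively, let $u^{n+1}$ solve the linear transport problem $\partial_t u^{n+1}+u^n\partial_x u^{n+1}=-F(u^n)$, $u^{n+1}|_{t=0}=u_0$. Since $u^n\in E^s_{p,r}(T)$ and $s$ satisfies \eqref{s}, we have $u^n\in B^s_{p,r}\hookrightarrow \mathrm{Lip}$ (by Proposition \ref{prop1}(4) applied to $\partial_x u^n$), $F(u^n)\in L^1([0,T];B^s_{p,r})$ by Lemma \ref{Fu}, and $\partial_x u^n$ has the regularity required in Lemma \ref{existence}; hence $u^{n+1}\in E^s_{p,r}(T)$ is well defined.

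\emph{Step 2 (Uniform bound).} Using the second inequality of Lemma \ref{priori estimate} with $v=u^n$ and $g=-F(u^n)$, together with $V'(t)\lesssim\|\partial_x u^n(t)\|_{B^{s-1}_{p,r}}\lesssim\|u^n(t)\|_{B^s_{p,r}}$ (valid in all the cases covered by \eqref{s}, since $s-1>\frac1p$ or $s-1=\frac1p$, $r=1$) and $\|F(u^n)\|_{B^s_{p,r}}\lesssim\|u^n\|^2_{B^s_{p,r}}$, one gets a differential inequality of the form $\frac{d}{dt}\|u^{n+1}\|_{B^s_{p,r}}\lesssim\|u^n\|_{B^s_{p,r}}\|u^{n+1}\|_{B^s_{p,r}}+\|u^n\|^2_{B^s_{p,r}}$. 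Choosing $T:=\frac{1}{C(1+\|u_0\|_{B^s_{p,r}})}$ small, a standard induction shows $\sup_n\sup_{t\in[0,T]}\|u^n(t)\|_{B^s_{p,r}}\leq 2\|u_0\|_{B^s_{p,r}}$ (e.g.\ one proves $\|u^n(t)\|_{B^s_{p,r}}\leq \frac{\|u_0\|_{B^s_{p,r}}}{1-Ct(1+\|u_0\|_{B^s_{p,r}})}$ by induction).

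\emph{Step 3 (Convergence at lower regularity).} Set $w^n:=u^{n+1}-u^n$; it solves $\partial_t w^{n+1}+u^{n+1}\partial_x w^{n+1}=-(w^n)\partial_x u^{n+1}-\bigl(F(u^{n+1})-F(u^n)\bigr)$. Estimate $w^{n+1}$ in $B^{s-1}_{p,r}$ via Lemma \ref{priori estimate}: the right-hand side is controlled in $B^{s-1}_{p,r}$ by $C\|w^n\|_{B^{s-1}_{p,r}}\bigl(\|u^{n+1}\|_{B^s_{p,r}}+\|u^n\|_{B^s_{p,r}}\bigr)$, using Lemma \ref{product}(2) for the product $w^n\partial_x u^{n+1}$ and a difference estimate for $F$ (which, being built from $\partial_x P(D)$ applied to quadratic expressions, obeys $\|F(a)-F(b)\|_{B^{s-1}_{p,r}}\lesssim (\|a\|_{B^s_{p,r}}+\|b\|_{B^s_{p,r}})\|a-b\|_{B^{s-1}_{p,r}}$ — here one loses one derivative, which is why the contraction is done in $B^{s-1}_{p,r}$, not $B^s_{p,r}$). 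With the uniform bound from Step 2 and $T$ possibly shrunk, this gives $\|w^{n+1}\|_{L^\infty_T B^{s-1}_{p,r}}\leq \frac12\|w^n\|_{L^\infty_T B^{s-1}_{p,r}}+ (\text{summable})$, so $(u^n)$ is Cauchy in $\mathcal{C}([0,T];B^{s-1}_{p,r})$; call the limit $u$. By Fatou (Proposition \ref{prop1}(6)) and interpolation (Lemma \ref{log}(1)), $u\in L^\infty([0,T];B^s_{p,r})$ and $u^n\to u$ in every $\mathcal{C}([0,T];B^{s'}_{p,r})$ with $s'<s$; passing to the limit in the equation shows $u$ solves \eqref{eq1}, and a standard argument (using Lemma \ref{existence} and Lemma \ref{priori estimate} to recover time-continuity, or $\mathcal{C}_w$ when $r=\infty$) gives $u\in E^s_{p,r}(T)$.

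\emph{Step 4 (Uniqueness).} Given two solutions $u,v\in E^s_{p,r}(T)$ with the same data, the difference $w=u-v$ solves a transport equation with $v$-drift and right-hand side $-w\partial_x u-(F(u)-F(v))$; the same $B^{s-1}_{p,r}$ estimate plus Gronwall (Lemma \ref{osgood}) forces $w\equiv0$.

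\emph{Step 5 (Continuity of the data-to-solution map).} For $r<\infty$ this follows from Lemma \ref{zp19}: writing the solution map as a limit of the iterates and using that $u^n_0\to u_0$ in $B^s_{p,r}$ implies convergence of the corresponding drifts in $L^1([0,T];B^{s-1}_{p,r})$, one upgrades $B^{s-1}_{p,r}$-convergence to $B^s_{p,r}$-convergence by a now-standard Bona--Smith-type argument (regularize the data, use uniform bounds and the lower-order convergence, and close with the continuity of $t\mapsto\|u(t)\|_{B^s_{p,r}}$). For $r=\infty$ one only claims continuity into $\mathcal{C}([0,T];B^{s'}_{p,\infty})$ for $s'<s$ and weak continuity into $B^s_{p,\infty}$, which comes directly from Steps 3--4.

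The main obstacle is Step 3 (and its echo in Steps 4--5): the difference estimate for the nonlinearity $F$. One must check that each $F_i$ — which contains up to two $x$-derivatives inside $P(D)$ and one (or three, for $F_4$) outside — maps differences boundedly from $B^s_{p,r}$ to $B^{s-1}_{p,r}$ with the quadratic/Lipschitz structure stated above, and in particular that the worst terms $F_3,F_4$ (present only when $b\neq\frac53$, carrying the coefficients $\frac{5-3b}2\alpha^2\beta^2$ and $\frac{b-5}2\alpha^2\beta^2$) really do gain enough smoothing from $P(D)$: $P(D)$ is an $S^{-4}$-multiplier, $\partial_x^3 P(D)$ is $S^{-1}$, and $u_{xx}^2, u_x^2\in B^{s-2}_{p,r}$ by Lemma \ref{product}(2) when $s>\max\{\frac52,2+\frac1p\}$, so $F_3,F_4\in B^{s-2+4-1}_{p,r}\cap B^{s-2+1}_{p,r}=B^{s+1}_{p,r}\cap B^{s-1}_{p,r}$ with the right quadratic bound — the arithmetic is exactly why the threshold $s>2+\frac1p$ (resp.\ $s>1+\frac1p$ when these terms vanish) is the natural one, and it is governed by the requirement $s-2>\frac1p$ (resp.\ $s-1>\frac1p$) needed for $u_{xx}^2$ (resp.\ $u_x^2$) to make sense in $B^{s-2}_{p,r}$ via the product law. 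Verifying these mapping properties carefully, and keeping track of the low-frequency ($\Delta_{-1}$) contributions where $P(D)$ does not decay, is the one place where the argument is more than bookkeeping.
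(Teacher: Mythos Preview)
Your approach is essentially the paper's: iterative scheme, uniform $B^s_{p,r}$ bound via Lemma~\ref{priori estimate} and Lemma~\ref{Fu}, Cauchy in $B^{s-1}_{p,r}$ via term-by-term difference estimates on the $F_i$, Fatou plus interpolation to recover regularity, Gronwall for uniqueness, and Lemma~\ref{zp19} for continuous dependence.

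Two small corrections. First, $F_4$ does \emph{not} vanish at $b=\frac53$: its coefficient is $\frac{b-5}{2}\alpha^2\beta^2$, zero only at $b=5$. The term that disappears when $b=\frac53$ is $F_3$ alone (coefficient $\frac{5-3b}{2}\alpha^2\beta^2$), and since $F_3$ is the only piece involving $u_{xx}^2$, it alone forces the stronger threshold $s>\max\{\frac52,2+\frac1p\}$; $F_4$, being $\partial_x^3P(D)(u_x^2)$, needs only $s>\max\{\frac32,1+\frac1p\}$, so your argument survives the slip. Second, the paper's continuity step is not a Bona--Smith regularization of the data but a decomposition $\partial_x u^n=z^n+w^n$ on the solutions themselves: $z^n$ solves $(\partial_t+u^n\partial_x)z^n=f^n-f^\infty$ with initial data $\partial_x u_0^n-\partial_x u_0^\infty$ and is estimated directly in $B^{s-1}_{p,r}$, while $w^n$ solves $(\partial_t+u^n\partial_x)w^n=f^\infty$ with fixed initial data $\partial_x u_0^\infty$, so only the drift varies and Lemma~\ref{zp19} yields $w^n\to w^\infty$ in $\mathcal C([0,T];B^{s-1}_{p,r})$. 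Your Bona--Smith sketch would also close, but it is a different mechanism.
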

\begin{proof}
Now we prove Theorem \ref{the1} in three steps.
	
\textbf{Step 1: Existence}
	
Set $u^0\triangleq0$ and define a sequence $(u^n)_{n\in\mathbb{N}}$ of smooth functions by solving the following linear transport equation
\begin{equation}\label{un}
\left\{\begin{array}{l}
    \partial_t u^{n+1}+u^n\partial_xu^{n+1}+F(u^n)=0,\\
    u^{n+1}|_{t=0}=S_{n+1}u_{0}.
\end{array}\right.
\end{equation}
Since $u^0=0$, we have $u^1\equiv S_1 u_0\in E_{p,r}^{s}(T),\ \forall\ T>0.$ By induction, we assume that $u^n\in E_{p,r}^{s}(T)$ for all $T>0$. Then by lemma\ \ref{Fu}, $F(u^n)\in L^1([0,T];B^s_{p,r}),\ \forall\ T>0$. Combining Lemma\ \ref{existence} and \eqref{un}, we easily deduce that \eqref{un} has a solution $u^{n+1}\in E_{p,r}^{s}(T),\ \forall\ T>0$.

Define $U^n(t)=\int_0^t\|u^{n}(t')\|_{B^{s}_{p,r}}dt'$. Since $\|u^{n+1}(0)\|_{B^{s}_{p,r}} =\|S_{n+1}u_{0}\|_{B^{s}_{p,r}} \leq C\|u_0\|_{B^{s}_{p,r}} $, by Lemma \ref{priori estimate} and Lemma \ref{Fu}, we have 
    \begin{align}
	\|u^{n+1}(t)\|_{B^{s}_{p,r}} 
        &\leq C\|u_0\|_{B^{s}_{p,r}}e^{CU^n(t)}+\int_0^t e^{C\int_{t'}^t \|u^{n}(t'')\|_{B^{s}_{p,r}}dt''} \|F(u^{n})(t')\|_{B^{s}_{p,r}}dt'\notag\\
        &\leq C\Big(\|u_0\|_{B^{s}_{p,r}}e^{CU^n(t)}+\int_0^t e^{C\int_{t'}^t \|u^{n}(t'')\|_{B^{s}_{p,r}}dt''} \|u^{n}(t')\|_{B^{s}_{p,r}}^2dt'\Big).
        \label{18}
    \end{align}
For fixed  $T>0$ such that $ 2C^2 T\|u_0\|_{B^{s}_{p,r}} <1$, we have
    $$    \|u^1(t)\|_{B^{s}_{p,r}}\equiv\|S_1u_0\|_{B^{s}_{p,r}}\leq
	\frac{C\|u_0\|_{B^{s}_{p,r}}}{1-2C^2 t\|u_0\|_{B^{s}_{p,r}}},\ \forall\ t\in [0,T].
    $$
By induction, we assume that
    \begin{equation}\label{19}
	\|u^n(t)\|_{B^{s}_{p,r}}\leq
	\frac{C\|u_0\|_{B^{s}_{p,r}}}{1-2C^2 t\|u_0\|_{B^{s}_{p,r}}},\ \forall\ t\in [0,T].
    \end{equation}
Plugging \eqref{19} into \eqref{18}, we have
    \begin{align*}
	&\|u^{n+1}(t)\|_{B^{s}_{p,r}}\\
    \leq&
	C\|u_0\|_{B^{s}_{p,r}}(1-2C^2 t\|u_0\|_{B^{s}_{p,r}})^{-\frac 12}
	+C\int_0^t\Big(\frac{ 1-2C^2 t\|u_0\|_{B^{s}_{p,r}}}{1-2C^2 t'\|u_0\|_{B^{s}_{p,r}}}\Big)^{-\frac 12} \Big(\frac{C\|u_0\|_{B^{s}_{p,r}}}{1-2C^2 t'\|u_0\|_{B^{s}_{p,r}}}\Big)^2dt'  \\
    =&\frac{C\|u_0\|_{B^{s}_{p,r}}}{1-2C^2 t\|u_0\|_{B^{s}_{p,r}}},\ \forall\ t\in [0,T].
    \end{align*}
Therefore, $(u^n)_{n\in \mathbb{N}}$ is uniformly bounded in $L^{\infty}([0,T];B^{s}_{p,r})$, which implies that $(u^n\partial_x u^{n+1})_{n\in \mathbb{N}}$ is uniformly  bounded in $L^{\infty}([0,T];B^{s-1}_{p,r})$ and $(F(u^n))_{n\in \mathbb{N}}$ is uniformly  bounded in $L^{\infty}([0,T];B^{s}_{p,r})$. We can conclude that the sequence $(u^n)_{n\in \mathbb{N}}$ is uniformly bounded in $E^{s}_{p,r}(T)$.

We claim that $(u^n)_{n\in \mathbb{N}}$ is a Cauchy sequence in $\mathcal{C}([0,T];B^{s-1}_{p,r})$. Indeed, for all $(m,n)\in\mathbb{N}^2$,
\begin{equation}\label{cauchy}
    \left\{
    \begin{array}{l}
    (\partial_t+u^{n+m}\partial_x)(u^{n+m+1}-u^{n+1})=(u^n-u^{n+m})\partial_xu^{n+1}+F(u^n)-F(u^{n+m})\triangleq g^{n,m},\\
    (u^{n+m+1}-u^{n+1})|_{t=0}=S_{n+m+1}u_{0}-S_{n+1}u_{0}.
    \end{array}\right.
\end{equation}
Define 
$$
\widetilde{U}^{n,m}(t)=\left\{
\begin{array}{ll}
\|\partial_x u^{n+m}(t)\|_{B_{p,\infty}^{\frac{1}{p}}\cap L^\infty}, & \text{if} \ b=\frac{5}{3}\ and\ s<2+\frac{1}{p}, \\
\|\partial_x u^{n+m}(t)\|_{B_{p,r}^{s-1}}, & \text{if}\ b=\frac{5}{3}\ and\ s=2+\frac{1}{p},\ r>1, \\
\|\partial_x u^{n+m}(t)\|_{B_{p,r}^{s-2}}, & \text{if}\ b\in\mathbb{R}\ and\ s>2+\frac{1}{p}\ (or\ s=2+\frac{1}{p},\ r=1). 
\end{array}\right.
$$
Since $(u^n)_{n\in \mathbb{N}}$ is uniformly bounded in $L^{\infty}([0,T];B^{s}_{p,r})$, we have $\widetilde{U}^{n,m}(t)\leq C,\ \forall\ (m,n)\in\mathbb{N}^2,\ \forall\ t\in [0,T]$. Applying Lemma \ref{priori estimate} to \eqref{cauchy}, we have
\begin{align*}
&\|(u^{n+m+1}-u^{n+1})(t)\|_{B^{s-1}_{p,r}}\\
\leq\ &e^{C\int_0^t \widetilde{U}^{n,m}(t')dt'}\Big(\|S_{n+m+1}u_{0}-S_{n+1}u_{0}\|_{B^{s-1}_{p,r}}+\int_0^t  e^{-C\int_0^{t'}\widetilde{U}^{n,m}(t'')dt''}\|g^{n,m}(t')\|_{B^{s-1}_{p,r}}dt'\Big)      \\
\leq\ &C_T\Big(\|S_{n+m+1}u_{0}-S_{n+1}u_{0}\|_{B^{s-1}_{p,r}}+\int_0^t \|g^{n,m}(t')\|_{B^{s-1}_{p,r}}dt'\Big).
\end{align*}
Taking advantage of Proposition \ref{prop1} and Lemma \ref{product}, we can obtain
\begin{align*}
\|(u^n-u^{n+m})\partial_xu^{n+1}\|_{B^{s-1}_{p,r}}
&\leq C\|u^n-u^{n+m}\|_{B^{s-1}_{p,r}}\|\partial_xu^{n+1}\|_{B^{s-1}_{p,r}}\\
&\leq C_T\|u^n-u^{n+m}\|_{B^{s-1}_{p,r}},\\
\|F_1(u^n)-F_1(u^{n+m})\|_{B^{s-1}_{p,r}}
&\leq C\|(u^n-u^{n+m})(u^n+u^{n+m})\|_{B^{s-1}_{p,r}}\\
&\leq C_T\|u^n-u^{n+m}\|_{B^{s-1}_{p,r}},\\
\|F_2(u^n)-F_2(u^{n+m})\|_{B^{s-1}_{p,r}}&\leq C\|(u_x^n-u_x^{n+m})(u_x^n+u_x^{n+m})\|_{B^{s-2}_{p,r}}\\
&\leq C\|u_x^n-u_x^{n+m}\|_{B^{s-2}_{p,r}}\|u_x^n+u_x^{n+m}\|_{B^{s-1}_{p,r}}\\
&\leq C_T\|u^n-u^{n+m}\|_{B^{s-1}_{p,r}},\\
\|F_4(u^n)-F_4(u^{n+m})\|_{B^{s-1}_{p,r}}&\leq C\|(u_x^n-u_x^{n+m})(u_x^n+u_x^{n+m})\|_{B^{s-2}_{p,r}}\\
&\leq C_T\|u^n-u^{n+m}\|_{B^{s-1}_{p,r}}.
\end{align*}
If $b=\frac{5}{3}$, then $F_3=0$. If $b\neq\frac{5}{3},\ s>\max\{\frac{5}{2},2+\frac{1}{p}\}$, then by Lemma \ref{product}, 
\begin{align*}
\|F_3(u^n)-F_3(u^{n+m})\|_{B^{s-1}_{p,r}}&\leq C\|(u_{xx}^n-u_{xx}^{n+m})(u_{xx}^n+u_{xx}^{n+m})\|_{B^{s-3}_{p,r}}\\
&\leq C\|u_{xx}^n-u_{xx}^{n+m}\|_{B^{s-3}_{p,r}}\|u_{xx}^n+u_{xx}^{n+m}\|_{B^{s-2}_{p,r}}\\
&\leq C_T\|u^n-u^{n+m}\|_{B^{s-1}_{p,r}}.
\end{align*}
Since $\|S_{n+m+1}u_{0}-S_{n+1}u_{0}\|_{B^{s-1}_{p,r}}\leq C2^{-n}$, we have
\begin{align*}
&\|(u^{n+m+1}-u^{n+1})(t)\|_{B^{s-1}_{p,r}}\\
\leq\ &C_T\Big(2^{-n}+\int_0^t\|u^{n+m}(t_1)-u^n(t_1)\|_{B^{s-1}_{p,r}}dt_1\Big)\\
\leq\ &C_T\Big(2^{-n}+\int_0^tC_T\Big(2^{-n+1}+\int_0^{t_1}\|u^{n+m-1}(t_2)-u^{n-1}(t_2)\|_{B^{s-1}_{p,r}}dt_2\Big)dt_1\Big)\\
\leq\ &\dots\\
\leq\ &\sum_{l=1}^{n+1}C_T^l 2^{-n+l-1}\frac{T^{l-1}}{(l-1)!}+\int_0^t\int_0^{t_1}\dots\int_0^{t_n}C_T^{n+1}\|u^m(t_{n+1})\|_{B^{s-1}_{p,r}}dt_{n+1}\dots dt_2 dt_1\\
\leq\ &C_T 2^{-n}e^{2TC_T}+\frac{C_T^{n+2}T^{n+1}}{(n+1)!}\rightarrow 0\ (n\rightarrow\infty).
\end{align*}
Therefore, $(u^n)_{n\in \mathbb{N}}$ is a Cauchy sequence in $\mathcal{C}([0,T];B^{s-1}_{p,r})$ and converges to some limit function $u\in \mathcal{C}([0,T];B^{s-1}_{p,r}).$

Finally, we have to check that $u\in E_{p,r}^{s}(T)$ and satisfies \eqref{eq1}. Combining the uniform boundedness of $(u^n)_{n\in \mathbb{N}}$ in $L^{\infty}([0,T];B^{s}_{p,r})$ and the Fatou property, we can obtain that $u\in L^{\infty}([0,T];B^{s}_{p,r})$. Since $(u^n)_{n\in \mathbb{N}}$ converges to $u$ in $\mathcal{C}([0,T];B^{s-1}_{p,r})$, an interpolation argument ensures that convergence holds true in $\mathcal{C}([0,T];B^{s'}_{p,r}),\ \forall\ s'<s$. Thus, we can easily take the limit in \eqref{un} and conclude that $u$ is a solution of \eqref{eq1}. Since $u\in L^{\infty}([0,T];B^{s}_{p,r})$, Lemma \ref{Fu} implies that $F(u)\in L^{\infty}([0,T];B^{s}_{p,r})$. Hence, by Lemma \ref{existence}, we see that $u$ is in $ \mathcal{C}([0,T];B^s_{p,r})$ (resp., $\big(\underset{s'<s}\bigcap \mathcal{C}([0,T];B^{s'}_{p,\infty})\big)\bigcap \mathcal{C}_w([0,T];B^s_{p,\infty})$) if $r<\infty$ (resp., $r=\infty$). Thus, we deduce from \eqref{eq1} that $u_t$ belongs to $\mathcal{C}([0,T];B^{s-1}_{p,r})$ (resp., $L^{\infty}([0,T];B^{s-1}_{p,\infty})$) if $r<\infty$ (resp., $r=\infty$) and conclude that $u\in E_{p,r}^{s}(T)$.

\textbf{Step 2: Uniqueness}
	
Suppose $u,v$ are two solutions of \eqref{eq1} with initial data $u_0$ and $v_0$, respectively. Then
\begin{equation}\label{unique}
\left\{
\begin{array}{l}
(\partial_t+u \partial_x)(u-v)=(v-u)v_x+F(v)-F(u),\\
(u-v)|_{t=0}=u_0-v_0.
\end{array}
\right.    
\end{equation}
Define 
$$
\widetilde{U}(t)=\left\{
\begin{array}{ll}
\|\partial_x u(t)\|_{B_{p,\infty}^{\frac{1}{p}}\cap L^\infty}, & \text{if} \ b=\frac{5}{3}\ and\ s<2+\frac{1}{p}, \\
\|\partial_x u(t)\|_{B_{p,r}^{s-1}}, & \text{if}\ b=\frac{5}{3}\ and\ s=2+\frac{1}{p},\ r>1, \\
\|\partial_x u(t)\|_{B_{p,r}^{s-2}}, & \text{if}\ b\in\mathbb{R}\ and\ s>2+\frac{1}{p}\ (or\ s=2+\frac{1}{p},\ r=1). 
\end{array}\right.
$$
Similar to Step 1, applying Lemma \ref{priori estimate} to \eqref{unique} gives
\begin{align*}
&e^{-C\int_0^t \widetilde{U}(t')dt'}\|(u-v)(t)\|_{B^{s-1}_{p,r}}\\
\leq\ &\|u_{0}-v_{0}\|_{B^{s-1}_{p,r}}+\int_0^t  e^{-C\int_0^{t'}\widetilde{U}(t'')dt''}\big(\|(v-u)v_x\|_{B^{s-1}_{p,r}}+\|F(v)-F(u)\|_{B^{s-1}_{p,r}}\big)dt'      \\
\leq\ &\|u_{0}-v_{0}\|_{B^{s-1}_{p,r}}+C\int_0^t  e^{-C\int_0^{t'}\widetilde{U}(t'')dt''}\|u-v\|_{B^{s-1}_{p,r}}\big(\|u\|_{B^{s}_{p,r}}+\|v\|_{B^{s}_{p,r}}\big)dt' .
\end{align*}
Taking advantage of the Gronwall lemma, we have
$$
e^{-C\int_0^t \widetilde{U}(t')dt'}\|(u-v)(t)\|_{B^{s-1}_{p,r}}\leq\|u_{0}-v_{0}\|_{B^{s-1}_{p,r}}e^{C\int_0^t \|u\|_{B^{s}_{p,r}}+\|v\|_{B^{s}_{p,r}}dt'}.
$$
Since $\widetilde{U}(t)\leq C\|u(t)\|_{B^{s}_{p,r}}$, we obtain
$$
\|(u-v)(t)\|_{B^{s-1}_{p,r}}\leq\|u_{0}-v_{0}\|_{B^{s-1}_{p,r}}e^{C\int_0^t \|u\|_{B^{s}_{p,r}}+\|v\|_{B^{s}_{p,r}}dt'}.
$$
Hence, if $u_0=v_0$, then $u\equiv v$ and we complete the proof of uniqueness.

\textbf{Step 3: The continuous dependence}

Denote $\overline{\mathbb{N}}=\mathbb{N}\cup\{\infty\}$. Suppose that $u^n$ is the solution
of \eqref{eq1} with initial data $u_0^n\in B^{s}_{p,r}$ and that $u_0^n\to u_0^\infty$ in $B^{s}_{p,r}$ when $n\to\infty$.

\textbf{Case 1: $r<\infty$}

Fix $T>0$ such that $ 2C^2 T\underset{n\in\overline{\mathbb{N}}}{\sup}\|u_0^n\|_{B^{s}_{p,r}} <1$. Then by Step 1, we have
    $$    \underset{n\in\overline{\mathbb{N}}}{\sup}\|u^n\|_{L^\infty_T(B^{s}_{p,r})}\leq
	\frac{C^2\underset{n\in\overline{\mathbb{N}}}{\sup}\|u_0^n\|_{B^{s}_{p,r}}}{1-2C^2 T\underset{n\in\overline{\mathbb{N}}}{\sup}\|u_0\|_{B^{s}_{p,r}}}\triangleq M.
    $$
Similar to Step 2, we see that $u^n\to u^\infty$ in $\mathcal{C}([0,T];B^{s-1}_{p,r})$ when $n\to\infty$. Hence, we only need to prove $\partial_x u^n\to\partial_x u^\infty$ in $\mathcal{C}([0,T];B^{s-1}_{p,r})$. For simplicity, let $v^n\triangleq\partial_x u^n,\ f^n\triangleq-(\partial_x u^n)^2-\partial_x F(u^n)$ and decompose $v^n$ into $v^n=z^n+w^n$ with $(z^n,w^n)$ satisfying
\begin{equation}\label{zn}
    \left\{
    \begin{aligned}
	&\partial_t z^n+u^n\partial_xz^n=f^n-f^{\infty}, \\
	&z^n|_{t=0}=\partial_xu_0^n-\partial_xu_0^{\infty}\\
    \end{aligned}\right.
\end{equation}
and
\begin{equation}\label{wn}
    \left\{
    \begin{aligned}
	&\partial_t w^n+u^n\partial_x w^n=f^{\infty}, \\
	&w^n|_{t=0}=\partial_xu_0^{\infty}. \\
    \end{aligned}\right.
\end{equation}

Similar to Step 1 and Step 2, applying Lemma \ref{priori estimate} to \eqref{zn} gives
$$
\|z^n(t)\|_{B^{s-1}_{p,r}}
\leq e^{CMT}\Big(\|\partial_xu_{0}^n-\partial_xu_{0}^\infty\|_{B^{s-1}_{p,r}}+\int_0^t \|f^n(t')-f^{\infty}(t')\|_{B^{s-1}_{p,r}}dt'\Big).
$$
In addition, Proposition \ref{prop1} implies that
\begin{align*}
\|(u^n_x)^2-(u^{\infty}_x)^2\|_{B^{s-1}_{p,r}}
&\leq C\|u^n_x-u^{\infty}_x\|_{B^{s-1}_{p,r}}\|u^n_x+u^{\infty}_x\|_{B^{s-1}_{p,r}}\\
&\leq C\|u^n_x-u^{\infty}_x\|_{B^{s-1}_{p,r}},\\
\|\partial_x F_1(u^n)-\partial_x F_1(u^{\infty})\|_{B^{s-1}_{p,r}}&\leq C\|(u^n-u^{\infty})(u^n+u^{\infty})\|_{B^{s-1}_{p,r}}\\
&\leq C\|u^n-u^{\infty}\|_{B^{s-1}_{p,r}},\\
\|\partial_x F_2(u^n)-\partial_x F_2(u^{\infty})\|_{B^{s-1}_{p,r}}&\leq C\|(u_x^n-u_x^{\infty})(u_x^n+u_x^{\infty})\|_{B^{s-1}_{p,r}}\\
&\leq C\|u_x^n-u_x^{\infty}\|_{B^{s-1}_{p,r}},\\
\|\partial_x F_4(u^n)-\partial_x F_4(u^{\infty})\|_{B^{s-1}_{p,r}}&\leq C\|(u_x^n-u_x^{\infty})(u_x^n+u_x^{\infty})\|_{B^{s-1}_{p,r}}\\
&\leq C\|u_x^n-u_x^{\infty}\|_{B^{s-1}_{p,r}}.
\end{align*}
If $b=\frac{5}{3}$, then $F_3=0$. If $b\neq\frac{5}{3},\ s>\max\{\frac{5}{2},2+\frac{1}{p}\}$, then
\begin{align*}
\|\partial_x F_3(u^n)-\partial_x F_3(u^{\infty})\|_{B^{s-1}_{p,r}}\leq C\|(u_{xx}^n-u_{xx}^{\infty})(u_{xx}^n+u_{xx}^{\infty})\|_{B^{s-3}_{p,r}}
\leq C\|u_x^n-u_x^{\infty}\|_{B^{s-1}_{p,r}}.
\end{align*}
Therefore,
$$
\|z^n(t)\|_{B^{s-1}_{p,r}}
\leq e^{CMT}C\Big(\|u_{0}^n-u_{0}^\infty\|_{B^{s}_{p,r}}+\int_0^t \|u^n-u^{\infty}\|_{B^{s-1}_{p,r}}+\|u_x^n-u_x^{\infty}\|_{B^{s-1}_{p,r}}dt'\Big).
$$

Now we consider \eqref{wn}. Since $s>1+\frac{1}{p}$, we have
\begin{align*}
\|f^{\infty}\|_{B^{s-1}_{p,r}}\leq\|(\partial_x u^{\infty})^2\|_{B^{s-1}_{p,r}}+\|\partial_x F(u^{\infty})\|_{B^{s-1}_{p,r}}
\leq C\big(\|\partial_x u^{\infty}\|^2_{B^{s-1}_{p,r}}+\|u^{\infty}\|^2_{B^{s}_{p,r}}\big)\leq CM^2.    
\end{align*}
Recall that $\partial_x u_0^{\infty}\in B^{s-1}_{p,r}$, $\underset{n\in\overline{\mathbb{N}}}{\sup}\|u^n\|_{L^\infty_T(B^{s}_{p,r})}\leq M$, and $u^n\to u^\infty$ in $\mathcal{C}([0,T];B^{s-1}_{p,r})$. By Lemma \ref{existence}, we see that $w^n\in \mathcal{C}([0,T];B^{s-1}_{p,r})$. Then we deduce from Lemma \ref{zp19} that $w^n\to w^{\infty}$ in $\mathcal{C}([0,T];B^{s-1}_{p,r})$.

Hence, for all $\varepsilon>0$, there exits a constant $N\in\mathbb{N}$, such that $\forall\ n>N,\ t\in[0,T]$,
\begin{align*}
&\|w^n(t)-w^{\infty}(t)\|_{B^{s-1}_{p,r}}<\frac{\varepsilon}{4}e^{-CTe^{CMT}},\\
&\|u^n(t)-u^{\infty}(t)\|_{B^{s-1}_{p,r}}<\frac{\varepsilon}{4CTe^{CMT}}e^{-CTe^{CMT}}.
\end{align*}
Then we have
\begin{align*}
&\|\partial_x u^n(t)-\partial_x u^{\infty}(t)\|_{B^{s-1}_{p,r}}\\
\leq&\|z^n(t)\|_{B^{s-1}_{p,r}}+\|w^n(t)-w^{\infty}(t)\|_{B^{s-1}_{p,r}}\\
\leq&Ce^{CMT}\Big(\|u_{0}^n-u_{0}^\infty\|_{B^{s}_{p,r}}+\int_0^t \|\partial_x u^n(t')-\partial_x u^{\infty}(t')\|_{B^{s-1}_{p,r}}dt'\Big)+\frac{\varepsilon}{2}e^{-CTe^{CMT}}.
\end{align*}
By the Gronwall lemma, we can obtain
$$\|\partial_x u^n(t)-\partial_x u^{\infty}(t)\|_{B^{s-1}_{p,r}}
\leq\frac{\varepsilon}{2}+Ce^{CMT}e^{CTe^{CMT}}\|u_{0}^n-u_{0}^\infty\|_{B^{s}_{p,r}}.
$$
Since $u_0^n\to u_0^\infty$ in $B^{s}_{p,r}$, we have $\partial_x u^n\to\partial_x u^\infty$ in $\mathcal{C}([0,T];B^{s-1}_{p,r})$. Therefore,
$$\|u^n-u^{\infty}\|_{C([0,T];B^{s}_{p,r})}\leq C\big(\|\partial_x u^n-\partial_x u^{\infty}\|_{C([0,T];B^{s-1}_{p,r})}+\|u^n-u^{\infty}\|_{C([0,T];B^{s-1}_{p,r})}\big)\to 0$$
when $n\to\infty$, which also implies that $F(u^n)\to F(u^{\infty})$ in $\mathcal{C}([0,T];B^{s-1}_{p,r})$. Since
\begin{align*}
\|u^n u^n_x-u^{\infty}u^{\infty}_x\|_{B^{s-1}_{p,r}}
\leq C\big(\|u^n\|_{B^{s-1}_{p,r}}\|u^n-u^{\infty}\|_{B^{s}_{p,r}}+\|u^{\infty}\|_{B^{s}_{p,r}}\|u^n-u^{\infty}\|_{B^{s}_{p,r}}\big)
\to 0
\end{align*}
when $n\to\infty$, we have $u^n u^n_x\to u^{\infty}u^{\infty}_x$ in $\mathcal{C}([0,T];B^{s-1}_{p,r})$. Then by \eqref{eq1}, we see that $\partial_t u^n\to\partial_t u^\infty$ in $\mathcal{C}([0,T];B^{s-1}_{p,r})$. Hence, we conclude that $u^n\to u^{\infty}$ in $E^s_{p,r}(T)$.

\textbf{Case 2: $r=\infty$}

Similar to Case 1, we have $\underset{n\in\overline{\mathbb{N}}}{\sup}\|u^n\|_{L^\infty_T(B^{s}_{p,\infty})}\leq M$ and $\|u^n- u^\infty\|_{L^{\infty}([0,T];B^{s-1}_{p,\infty})}\to 0\ (n\to\infty)$. For all $\phi\in B^{-s}_{p',1}$, we see
\begin{align*}
\langle u^n(t)-u^{\infty}(t),\phi\rangle
&=\langle S_j(u^n(t)-u^{\infty}(t)),\phi\rangle+\langle(Id-S_j)(u^n(t)-u^{\infty}(t)),\phi\rangle\\
&=\langle u^n(t)-u^{\infty}(t),S_j\phi\rangle+\langle u^n(t)-u^{\infty}(t),(Id-S_j)\phi\rangle.
\end{align*}
Note that
$$
\underset{t\in[0,T]}{\sup}|\langle u^n(t)-u^{\infty}(t),S_j\phi\rangle|\leq C\|u^n- u^\infty\|_{L^{\infty}([0,T];B^{s-1}_{p,\infty})}\|S_j\phi\|_{B^{1-s}_{p',1}}
$$
and
\begin{align*}
\underset{t\in[0,T]}{\sup}|\langle u^n(t)-u^{\infty}(t),(Id-S_j)\phi\rangle|&\leq C\|u^n- u^\infty\|_{L^{\infty}([0,T];B^{s}_{p,\infty})}\|(Id-S_j)\phi\|_{B^{-s}_{p',1}}\\
&\leq 2CM\|(Id-S_j)\phi\|_{B^{-s}_{p',1}}\to 0\ (j\to\infty).
\end{align*}
For all $\varepsilon>0,$ we can fix $j\in\mathbb{N}$ large enough such that $\underset{t\in[0,T]}{\sup}|\langle u^n(t)-u^{\infty}(t),(Id-S_j)\phi\rangle|<\varepsilon/2$. For fixed $j$, there exists $N\in\mathbb{N}$ such that $\underset{t\in[0,T]}{\sup}|\langle u^n(t)-u^{\infty}(t),S_j\phi\rangle|<\varepsilon/2,\ \forall\ n>N$. Hence, we conclude that $u^n\to u^{\infty}$ in  $\mathcal{C}_w([0,T];B^s_{p,\infty})$.
\end{proof}

Next, taking advantage of the compactness method and the Lagrange coordinate transformation, we give the following local well-posedness result, which implies the index $\frac 52\ (resp.,\ \frac 32)$ in Theorem \ref{the1} is not necessary.
\begin{theo}\label{the2}
Let $u_{0}\in B_{p,1}^{s}$ with\ $1\leq p<\infty,$
$$
s=\left\{\begin{array}{l}
	2+\frac{1}{p} \quad \text{if} \quad b \neq \frac{5}{3}, \\
	1+\frac{1}{p} \quad \text{if} \quad b = \frac{5}{3}.
\end{array}\right.
$$
There exists a time $T>0$ such that\ \eqref{eq1} has a unique solution $u \in E_{p,1}^{s}(T)$. Furthermore, the solution map is continuous from $B^{s}_{p,1}$ to $E^{s}_{p,1}(T)$.
\end{theo}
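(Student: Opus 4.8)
The plan is to split the argument, because at the critical index the product estimates that made the Picard iteration of Theorem~\ref{the1} close are now exactly at the endpoint of Lemma~\ref{product}: for $b=\tfrac53$ one would need $\|fg\|_{B^{s-2}_{p,1}}\lesssim\|f\|_{B^{s-2}_{p,1}}\|g\|_{B^{s-1}_{p,1}}$ to control the $F_4$-difference, i.e. the strict inequality $s_1+s_2=2s-3>2/p-1$, which is an equality when $s=1+\tfrac1p$ (and $F_3$ obstructs the case $b\neq\tfrac53$ in the same way). So I would obtain existence by a compactness method on regularised data, and uniqueness together with continuous dependence by passing to Lagrangian coordinates, where the one-derivative loss coming from the transport term disappears and the system becomes a locally Lipschitz ODE in a Besov space.

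\emph{Existence.} First I would regularise the data by $u_0^{n}=S_n u_0$, so that $u_0^{n}\in\bigcap_{\sigma}B^{\sigma}_{p,1}$, $u_0^{n}\to u_0$ in $B^{s}_{p,1}$ and $\|u_0^{n}\|_{B^{s}_{p,1}}\le C\|u_0\|_{B^{s}_{p,1}}$; Theorem~\ref{the1} then produces smooth solutions $u^{n}$. Applying Lemma~\ref{priori estimate} to $\partial_t u^{n}+u^{n}\partial_x u^{n}=-F(u^{n})$ in the case $f=v$, so that $V'(t)=\|\partial_x u^{n}(t)\|_{L^{\infty}}\lesssim\|u^{n}(t)\|_{B^{s}_{p,1}}$ (valid since $s\ge 1+\tfrac1p$), together with Lemma~\ref{Fu}, gives $\|u^{n}(t)\|_{B^{s}_{p,1}}\le C\|u_0\|_{B^{s}_{p,1}}+C\int_0^{t}\|u^{n}\|^{2}_{B^{s}_{p,1}}\,dt'$, hence a time $T>0$ depending only on $\|u_0\|_{B^{s}_{p,1}}$ and a uniform bound $\sup_n\|u^{n}\|_{L^{\infty}_T(B^{s}_{p,1})}\le M$. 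Then $\partial_t u^{n}$ is uniformly bounded in $L^{\infty}_T(B^{s-1}_{p,1})$ by \eqref{eq1}, and $u^{n},\partial_x u^{n}$ (and $\partial_{xx}u^{n}$ when $b\neq\tfrac53$) are uniformly bounded in $L^{\infty}_T(L^{\infty})$ by Proposition~\ref{prop1}(4). Using the local compact embedding $B^{s}_{p,1}\hookrightarrow\hookrightarrow B^{s-1}_{p,1}$, an Aubin--Lions/Arzel\`a--Ascoli argument and a diagonal extraction, a subsequence of $(u^{n})$ converges, locally in space, in $\mathcal C([0,T];B^{s-1}_{p,1})$ to some $u$; by the uniform $L^{\infty}$ bounds the quadratic quantities $(u^{n})^{2},(\partial_x u^{n})^{2},(\partial_{xx}u^{n})^{2}$ converge in $L^{q}_{\mathrm{loc}}$ and hence in $\mathcal S'$, so I can pass to the limit in \eqref{eq1}\eqref{eqF}. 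The Fatou property gives $u\in L^{\infty}_T(B^{s}_{p,1})$, and since $u$ solves the transport equation $\partial_t u+u\,\partial_x u=-F(u)$ with $-F(u)\in L^{1}_T(B^{s}_{p,1})$ (Lemma~\ref{Fu}) and $\partial_x u\in L^{1}_T(B^{s-1}_{p,1})$, Lemma~\ref{existence} upgrades $u$ to $\mathcal C([0,T];B^{s}_{p,1})$; the equation then puts $\partial_t u$ in $\mathcal C([0,T];B^{s-1}_{p,1})$, so $u\in E^{s}_{p,1}(T)$.

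\emph{Uniqueness and continuous dependence via Lagrangian coordinates.} Given a solution $u\in E^{s}_{p,1}(T)$, I would introduce the flow $y(t,\xi)$ solving $\partial_t y=u(t,y)$, $y(0,\xi)=\xi$; then $y(t,\cdot)$ is an increasing diffeomorphism with $y_\xi-1\in\mathcal C([0,T];B^{s-1}_{p,1})$ and $\inf y_\xi>0$ on $[0,T]$. The momentum $m=P(D)^{-1}u$ satisfies $m_t+u m_x+b u_x m=0$, which integrates along the flow to the conservation law $m(t,y(t,\xi))\,y_\xi(t,\xi)^{b}=m_0(\xi)$, with $m_0\in B^{s-4}_{p,1}$. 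Since $u=G*m$, $\partial_x u=(\partial_x G)*m$, $\partial_{xx}u=(\partial_{xx}G)*m$, and each $S^{j-4}$-multiplier $\partial_x^{j}P(D)$ acts as convolution with the kernel $\partial_x^{j}G$ (Proposition~\ref{prop1}(7)), every term of \eqref{eq1}\eqref{eqF} evaluated at $y(t,\xi)$ can, after the substitution $x'=y(t,\eta)$ and the conservation law, be written as a (possibly nested) convolution of such kernels against $m_0$, composed with $y$ and weighted by powers of $y_\xi$. Moving derivatives off $m_0$ — via $m_0=(1-\alpha^{2}\partial_{xx})(1-\beta^{2}\partial_{xx})u_0$ — and onto the compositions $G(y(\xi)-y(\cdot))$ by integration by parts keeps every factor at nonnegative regularity, with all coefficients living in the algebra $B^{s-1}_{p,1}\hookrightarrow L^{\infty}$, which is stable under composition with the diffeomorphisms at hand. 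Consequently $(y-\mathrm{id},\,y_\xi-1)$ solves a closed first-order ODE in $B^{s}_{p,1}\times B^{s-1}_{p,1}$ whose right-hand side is assembled from bounded convolution operators, the smooth functions $z\mapsto(1+z)^{1-b}$ and $z\mapsto(1+z)^{-1}$, and products/compositions continuous on $B^{s-1}_{p,1}$, hence is locally Lipschitz. The uniqueness part of the Banach-space Picard--Lindel\"of theorem then forces two Eulerian solutions with the same datum to have identical Lagrangian unknowns, hence to coincide. For continuous dependence, if $u_0^{n}\to u_0$ in $B^{s}_{p,1}$ then $m_0^{n}\to m_0$ in $B^{s-4}_{p,1}$; the bound $M$ gives a common existence time $T$, the Lipschitz dependence of the Lagrangian ODE on the parameter $m_0$ yields convergence of $(y^{n}-\mathrm{id},\,y^{n}_\xi-1)$ in $\mathcal C([0,T];B^{s}_{p,1}\times B^{s-1}_{p,1})$, and reconstructing $u^{n}$ from the Lagrangian unknowns (the convolution formula for $u^{n}\!\circ y^{n}$ composed with $(y^{n})^{-1}$, a continuous operation on these spaces) gives $u^{n}\to u$ in $\mathcal C([0,T];B^{s}_{p,1})$, hence in $E^{s}_{p,1}(T)$ through the equation as in Step~3 of Theorem~\ref{the1}.

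\emph{Expected main obstacle.} The hard part is the Lagrangian reformulation at the critical index: for each term of \eqref{eqF}, and especially for $F_4=\tfrac{b-5}{2}\alpha^{2}\beta^{2}\partial_x^{3}P(D)(u_x^{2})$, which gains only one net derivative, one must choose precisely which integrations by parts to perform and in which Besov space to carry each factor, so that the rough distribution $m_0\in B^{s-4}_{p,1}$ is never paired with a factor of insufficient regularity — which, as recalled above, is exactly the product that fails at the endpoint. One also has to establish the composition and inverse-diffeomorphism estimates on the critical algebra $B^{s-1}_{p,1}$ that make the passage between Eulerian and Lagrangian variables continuous from $B^{s}_{p,1}$ to $B^{s}_{p,1}$; this bookkeeping is the technical core, after which the Banach-space ODE theory delivers uniqueness, the common existence time, and the continuity of the solution map.
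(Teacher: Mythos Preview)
Your existence step (regularise, uniform bounds via Lemmas~\ref{priori estimate}--\ref{Fu}, local compactness and diagonal extraction, Fatou, then Lemma~\ref{existence} to recover time-continuity) is essentially the paper's Step~1.

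For uniqueness and continuous dependence the paper's Lagrangian argument is considerably more elementary than what you outline, and in particular it \emph{avoids} the obstacle you single out. First, the paper never invokes the momentum conservation $m(t,y)\,y_\xi^{b}=m_0$: at these indices $m_0\in B^{s-4}_{p,1}$ is a distribution of negative order, so the pointwise identity has no direct meaning, which is precisely why you are forced to ``move derivatives off $m_0$'' back onto $u_0$ --- effectively undoing the conservation law. Second, the paper does not set up the Lagrangian ODE in the critical Besov algebra $B^{s-1}_{p,1}$. It works instead in the classical spaces $W^{2,\infty}\cap W^{2,p}$ (resp.\ $W^{1,\infty}\cap W^{1,p}$ when $b=\tfrac53$), where $U=u\circ y$ and $y-\mathrm{id}$ land via the embedding $B^{s}_{p,1}\hookrightarrow W^{2,\infty}$ (resp.\ $W^{1,\infty}$) and a straightforward change of variables. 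The core is a direct Lipschitz estimate (Lemma~\ref{F-F}) for $\tilde F(U,y)=F(u)\circ y$ in those Sobolev norms, obtained by writing $P(D)$ as convolution with the explicit kernel $G(x)=\tfrac14 e^{-|x|}(1+|x|)$, substituting $x=y(\eta)$, and using only the elementary bounds $\tfrac12\le y_\xi\le C$. No critical-Besov composition or inverse-diffeomorphism estimates are needed.

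For continuous dependence the paper likewise does \emph{not} reconstruct $u^{n}$ in $B^{s}_{p,1}$ from Lagrangian data. The Lagrangian Gronwall estimate is used only to produce convergence of $u^{n}-u^{\infty}$ in $L^{p}$, hence in $B^{-1}_{p,1}$; an interpolation against the uniform $B^{s}_{p,1}$ bound then gives convergence in $B^{s-1}_{p,1}$, and the upgrade to $B^{s}_{p,1}$ is done back in Eulerian coordinates via the $z^{n}/w^{n}$ splitting and Lemma~\ref{zp19}, exactly as in Step~3 of Theorem~\ref{the1}. So the bookkeeping you anticipate --- products at the endpoint, compositions on $B^{s-1}_{p,1}$, continuity of $(y^{n})^{-1}$ --- is entirely bypassed: the Lagrangian step is carried out one level below in Sobolev regularity, and the critical-index information is recovered afterwards by the Eulerian machinery already available from Theorem~\ref{the1}.
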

\begin{proof}
We prove Theorem \ref{the2} in three steps.

\textbf{Step 1: Existence}

Similar to the proof of Theorem \ref{the1}, we set $u^0\triangleq0$ and define $(u^n)_{n\in\mathbb{N}}$ by solving equation \eqref{un}. We can similarly deduce that \eqref{un} has a solution $u^{n+1}\in E_{p,1}^{s}(T)$ for all positive $T$. Furthermore, for fixed $T>0$ such that $ 2C^2 T\|u_0\|_{B^{s}_{p,1}}<1$, we see that $(u^n)_{n\in \mathbb{N}}$ is uniformly bounded in $L^{\infty}([0,T];B^{s}_{p,1})$ and that $(\partial_t u^{n})_{n\in \mathbb{N}}$ is uniformly  bounded in $L^{\infty}([0,T];B^{s-1}_{p,1})$, which also implies that $(u^{n})_{n\in \mathbb{N}}$ is uniformly  bounded in $\mathcal{C}([0,T];B^{s}_{p,1})\cap \mathcal{C}^{\frac{1}{2}}([0,T];B^{s-1}_{p,1})$.

Now we use the compactness method for $(u^n)_{n\in\mathbb{N}}$ to get a solution $u$ of \eqref{eq1}. Let $(\varphi_j)_{j\in\mathbb{N}}$ be a sequence of smooth functions with value in $[0,1]$ supported in $B(0,j+1)$ and equal to 1 on $B(0,j)$. Then for all $j\in\mathbb{N},\ (\varphi_j u^{n})_{n\in \mathbb{N}}$ is uniformly  bounded in $\mathcal{C}([0,T];B^{s}_{p,1})\cap \mathcal{C}^{\frac{1}{2}}([0,T];B^{s-1}_{p,1})$. By Proposition \ref{prop1}, we see that $z\mapsto\varphi_j z$ is a compact operator from $B^s_{p,1}$ to $B^{s-1}_{p,1}$. Thus, we deduce from the Ascoli’s theorem and the Cantor’s diagonal process that there exists some function $u_j$ and a subsequence of $(u^n)_{n\in\mathbb{N}}$ (still denoted by $(u^n)_{n\in\mathbb{N}}$) such that $\varphi_j u^n\to u_j$ in $\mathcal{C}([0,T];B^{s-1}_{p,1}),\ \forall\ j\in\mathbb{N}$, which implies that there exists some function $u$ such that $\forall\ \varphi\in\mathcal{D},\ \varphi u^n\to\varphi u$ in $\mathcal{C}([0,T];B^{s-1}_{p,1})$. Due to the uniform boundedness of $(u^n)_{n\in \mathbb{N}}$ in $L^{\infty}([0,T];B^{s}_{p,1})$, the Fatou property guarantees that $u\in L^{\infty}([0,T];B^{s}_{p,1})$, so we have $\varphi u^n, \varphi u\in L^{\infty}([0,T];B^{s}_{p,1}),\ \forall\ \varphi\in\mathcal{D}$. Hence, an interpolation argument ensures that $\varphi u^n\to\varphi u$ in $\mathcal{C}([0,T];B^{s'}_{p,1}),\ \forall\ s'<s$. Therefore, it is a routine process to prove that $u$ is the solution of \eqref{eq1}. Then, similar to the proof of Theorem \ref{the1}, we have $u\in E^s_{p,1}(T)$.

\textbf{Step 2: Uniqueness}

If $b\neq\frac{5}{3}$, then $u\in \mathcal{C}([0,T];B_{p,1}^{2+\frac{1}{p}})\hookrightarrow \mathcal{C}([0,T];W^{2,\infty})$. If $b=\frac{5}{3}$, then $u\in \mathcal{C}([0,T];B_{p,1}^{1+\frac{1}{p}})\hookrightarrow \mathcal{C}([0,T];W^{1,\infty})$. Hence, the problem
\begin{equation}\label{y}
\left\{
\begin{array}{ll}
	\frac{d}{dt}y(t,\xi)=u\left(t,y(t,\xi)\right),&\quad t>0,\ \xi\in\mathbb{R}, \\
	y(0,\xi)=\xi,&\quad\xi\in\mathbb{R}
\end{array}\right.
\end{equation}
has a unique solution $y\in \mathcal{C}^1([0,T];\mathcal{C})$. Let $U(t,\xi)\triangleq u(t,y(t,\xi))$. We can obtain
\begin{align}
& U_\xi(t,\xi)=u_x(t,y(t,\xi))\ y_\xi(t,\xi),\label{Uxi}\\
& U_{\xi\xi}(t,\xi)=u_{xx}(t,y(t,\xi))\ y_\xi^2(t,\xi)+u_x(t,y(t,\xi))\ y_{\xi\xi}(t,\xi),\label{Uxixi}\\
& y(t,\xi)=\xi+\int_{0}^{t}U d\tau,\label{y0xi} \\
& y_{\xi}\left(t,\xi\right)=1+\int_{0}^{t}U_{\xi}d\tau,\label{yxi} \\
& U_{t}(t,\xi)=-F(u)\circ y\triangleq-\left(\widetilde{F}(U,y)\right),\label{Ut} \\
& U_{t\xi}(t,\xi)=-\left(\widetilde{F}(U,y)\right)_{\xi}.\label{Utxi}
\end{align}
Since
\begin{align*}
	y_{\xi}\left(t,\xi\right)&=\exp\left({\int_{0}^{t}(\partial_{\xi}u)(\tau,y(\tau,\xi))d\tau}\right)\in L^\infty\left([0,T]\times\mathbb{R}\right), \\
	y_{\xi\xi}\left(t,\xi\right)&=\exp\left({\int_{0}^{t}(\partial_{\xi}u)(\tau,y(\tau,\xi))d\tau}\right)\int_{0}^{t}(\partial_{\xi\xi}u)(\tau,y(\tau,\xi))\ y_{\xi}\left(\tau,\xi\right)d\tau\\
	&\in L^\infty\left([0,T]\times\mathbb{R}\right) \quad \text{if} \quad b\neq\frac{5}{3},
\end{align*}
we deduce from \eqref{Uxi} \eqref{Uxixi} that
\begin{align*}
U(t,\xi)\in\left\{
\begin{array}{ll}
L^\infty([0,T];W^{2,\infty}) \quad \text{if} \quad b\neq\frac{5}{3}, \\
L^\infty([0,T];W^{1,\infty}) \quad \text{if} \quad b=\frac{5}{3}.
\end{array}\right.
\end{align*}
Therefore, \eqref{yxi} ensures that 
\begin{equation} \label{bddyxi}
\frac{1}{2}\leq y_\xi\leq C_{u_0}
\end{equation}
for $T>0$ small enough, which implies the flow map
$y(t,\cdot)$ is an increasing diffeomorphism over $\mathbb{R}$. It follows that
\begin{align*}
 &\|U\|_{L^{p}}^{p}=\int_{\mathbb{R}}|U(t,\xi)|^{p}d\xi=\int_{\mathbb{R}}|u(t,y(t,\xi))|^{p}\frac{1}{y_{\xi}}dy\leq2\|u\|_{L^{p}}^{p}\leq C\|u\|_{B^{1+\frac 1p}_{p,1}}^{p}\leq C, \\
& \|U_{\xi}\|_{L^{p}}^{p}=\int_{\mathbb{R}}|U_{\xi}(t,\xi)|^{p}d\xi=\int_{\mathbb{R}}|u_{x}(t,y(t,\xi))|^{p}y_{\xi}^{p-1}dy\leq C_{u_{0}}^{p-1}\|u_{x}\|_{L^{p}}^{p}\leq C\|u\|_{B^{1+\frac 1p}_{p,1}}^{p}\leq C.
\end{align*}
If $b\neq\frac 53$, we have
\begin{align*} \|U_{\xi\xi}\|_{L^{p}}^{p}
=&\int_{\mathbb{R}}|u_{xx}(t,y(t,\xi))\ y_\xi^2(t,\xi)+u_x(t,y(t,\xi))\ y_{\xi\xi}(t,\xi)|^{p}d\xi\\
\leq &C_p\int_{\mathbb{R}}|u_{xx}(t,y(t,\xi))|^{p}\ y_\xi^{2p-1}(t,\xi)+|u_x(t,y(t,\xi))|^p|y_{\xi\xi}(t,\xi)|^p y_\xi^{-1}dy\\
\leq &C_p C_{u_{0}}^{2p-1}\|u_{xx}\|_{L^{p}}^{p}+C\|u_x\|_{L^{p}}^{p}\\
\leq &C\|u\|_{B^{2+\frac 1p}_{p,1}}^{p}\leq C.
\end{align*}
Together with \eqref{y0xi}, we gain
\begin{align}\label{bddU}
	U(t,\xi),\ y(t,\xi)-\xi\in\left\{
	\begin{array}{ll}
		L^\infty([0,T];W^{2,\infty}\cap W^{2,p}) \quad \text{if} \quad b\neq\frac{5}{3}, \\
		L^\infty([0,T];W^{1,\infty}\cap W^{1,p}) \quad \text{if} \quad b=\frac{5}{3}.
	\end{array}\right.
\end{align}

In order to prove the uniqueness, we suppose that $u_1, u_2$ are two solutions of \eqref{eq1} with the same initial data $u_0$. Then $U_i(t,\xi)\triangleq u_i(t,y_i(t,\xi))$ satisfies \eqref{Ut} and \eqref{Utxi}. We temporarily assume that the following lemma holds.

\begin{lemm}\label{F-F}
If $b\neq\frac 53$, then 
$$
\|\widetilde{F}(U_{1},y_{1})-\widetilde{F}(U_{2},y_{2})\|_{W^{2,\infty}\cap W^{2,p}}\leq C\left(\|U_1-U_2\|_{W^{2,\infty}\cap W^{2,p}}+\|y_1-y_2\|_{W^{2,\infty}\cap W^{2,p}}\right).
$$
Otherwise, if $b=\frac 53$, then 
$$
\|\widetilde{F}(U_{1},y_{1})-\widetilde{F}(U_{2},y_{2})\|_{W^{1,\infty}\cap W^{1,p}}\leq C\left(\|U_1-U_2\|_{W^{1,\infty}\cap W^{1,p}}+\|y_1-y_2\|_{W^{1,\infty}\cap W^{1,p}}\right).
$$
\end{lemm}
Taking advantage of Lemma \ref{F-F}, we see that if $b\neq\frac{5}{3}$, then
\begin{align*}
&\|U_1-U_2\|_{W^{2,\infty}\cap W^{2,p}}+\|y_1-y_2\|_{W^{2,\infty}\cap W^{2,p}}\\
\leq&C\Big(\|U_1(0)-U_2(0)\|_{W^{2,\infty}\cap W^{2,p}}+\|y_1(0)-y_2(0)\|_{W^{2,\infty}\cap W^{2,p}}\\
&+\int_{0}^{T}\|\widetilde{F}(U_{1},y_{1})-\widetilde{F}(U_{2},y_{2})\|_{W^{2,\infty}\cap W^{2,p}}+\|U_{1}-U_{2}\|_{W^{2,\infty}\cap W^{2,p}}dt\Big)\\
\leq&C\Big(\|U_1(0)-U_2(0)\|_{W^{2,\infty}\cap W^{2,p}}+\int_{0}^{T}\|U_1-U_2\|_{W^{2,\infty}\cap W^{2,p}}+\|y_1-y_2\|_{W^{2,\infty}\cap W^{2,p}}dt\Big).
\end{align*}
The Gronwall lemma gives
\begin{align*}
	&\|U_1-U_2\|_{W^{2,\infty}\cap W^{2,p}}+\|y_1-y_2\|_{W^{2,\infty}\cap W^{2,p}}\\
	\leq&C\|U_1(0)-U_2(0)\|_{W^{2,\infty}\cap W^{2,p}}\\
	\leq&C\|u_1(0)-u_2(0)\|_{B^{2+\frac 1p}_{p,1}}=0.
\end{align*}
Similarly, if $b=\frac{5}{3}$, we have 
$$
\|U_1-U_2\|_{W^{1,\infty}\cap W^{1,p}}+\|y_1-y_2\|_{W^{1,\infty}\cap W^{1,p}}\leq C\|u_1(0)-u_2(0)\|_{B^{1+\frac 1p}_{p,1}}=0.
$$
Hence, we see 
\begin{align*}
\|u_{1}-u_{2}\|_{B^0_{p,\infty}} & \leq\|u_{1}-u_{2}\|_{L^{p}} \leq C_{u_0}^{\frac 1p}\|u_{1}\circ y_{1}-u_{2}\circ y_{1}\|_{L^{p}} \\
& \leq C_{u_0}^{\frac 1p}\|u_{1}\circ y_{1}-u_{2}\circ y_{2}+u_{2}\circ y_{2}-u_{2}\circ y_{1}\|_{L^{p}} \\
& \leq C\|U_{1}-U_{2}\|_{L^{p}}+C\|u_{2x}\|_{L^{\infty}}\|y_{1}-y_{2}\|_{L^{p}}=0
\end{align*}
and obtain the uniqueness.

\textbf{Step 3: The continuous dependence}

Similar to the proof of Theorem \ref{the1}, we set $\overline{\mathbb{N}}=\mathbb{N}\cup\{\infty\}$ and fix $T>0$ such that $ 2C^2 T\underset{n\in\overline{\mathbb{N}}}{\sup}\|u_0^n\|_{B^{s}_{p,1}} <1$. Suppose that $u^n$ is the solution
of \eqref{eq1} with initial data $u_0^n\in B^{s}_{p,1}$ and that $u_0^n\to u_0^\infty$ in $B^{s}_{p,1}$ when $n\to\infty$. Then we have $\underset{n\in\overline{\mathbb{N}}}{\sup}\|u^n\|_{L^\infty_T(B^{s}_{p,r})}\leq M
$. By Step 2, we see that 
$$
\|(u^n-u^\infty)(t)\|_{B^{-1}_{p,1}}
\leq C\|(u^n-u^\infty)(t)\|_{L^p}\\
\leq C\|u_0^n-u_0^\infty\|_{B^{s}_{p,1}}
\to 0\ (n\to\infty).
$$
Taking advantage of the interpolation inequality, we have $u^n\to u^\infty$ in $\mathcal{C}([0,T];B^{s-1}_{p,1})$ when $n\to\infty$. Following the same line as Step 3 in the proof of Theorem \ref{the1}, we can obtain $\partial_x u^n\to\partial_x u^\infty$ in $\mathcal{C}([0,T];B^{s-1}_{p,1})$ and conclude that $u^n\to u^\infty$ in $E^{s}_{p,1}(T)$.
\end{proof}

For the sake of completeness, we now prove Lemma \ref{F-F}.
\begin{proof}[Proof of Lemma \ref{F-F}]
Without loss of generality, we assume that $\alpha=\beta=1$.

We mainly focus on the situation when $b\neq\frac 53$. The proof for the case when $b=\frac 53$ is similar and much easier, so we will omit it.

Denote $p(x)\triangleq\frac 12 e^{-|x|},\ G(x)\triangleq\frac 14 e^{-|x|}(1+|x|)$. It's easy to prove that 
\begin{align}
(1-\partial_{xx})^{-1}u=&p\ast u,\label{p}\\ (1-\partial_{xx})^{-1}(1-\partial_{xx})^{-1}u=&G\ast u.\label{G}
\end{align}
Let $P\triangleq \frac b2 u_{i}^{2}+\frac{1-b}{2}u_{ix}^{2}+\frac{5-3b}{2}u_{ixx}^{2}$, $Q\triangleq -\frac b2 u_i^2+2u_{ix}^{2}-\frac{5-3b}{2}u_{ixx}^{2}$. Then \eqref{eqF} gives
\begin{align}
	&\widetilde{F}(U_{i},y_{i})=F(u_i)\circ y_i\label{1}\\
	=&-\frac{1}{4}\int_{\mathbb{R}}\mathrm{sign}\left(y_{i}(t,\xi)-x\right)e^{-|y_{i}(t,\xi)-x|}|y_{i}(t,\xi)-x|Pdx\notag\\
	&+\frac{b-5}{4}\int_{\mathbb{R}}\mathrm{sign}\left(y_{i}(t,\xi)-x\right)e^{-|y_{i}(t,\xi)-x|}u_{ix}^{2}dx,\notag\\
&\left(\widetilde{F}(U_{i},y_{i})\right)_\xi= y_{i\xi}\cdot\partial_{x}F(u_i)\circ y_i\label{2}\\
=&\frac{b-5}{2}\frac{U_{i\xi}^2}{y_{i\xi}}+\frac{y_{i\xi}}{2}\int_{\mathbb{R}}e^{-|y_{i}(t,\xi)-x|}Qdx+\frac{y_{i\xi}}{4}\int_{\mathbb{R}}e^{-|y_{i}(t,\xi)-x|}(1+|y_{i}(t,\xi)-x|)Pdx,\notag\\
	&\left(\widetilde{F}(U_{i},y_{i})\right)_{\xi\xi}= y_{i\xi\xi}\cdot\partial_{x}F(u_i)\circ y_i+y_{i\xi}^2\cdot\partial_{xx}F(u_i)\circ y_i\label{3}\\
	=&\frac{b-5}{2}\frac{U_{i\xi}^2}{y_{i\xi}^2}\cdot y_{i\xi\xi}+\frac{y_{i\xi\xi}}{2}\int_{\mathbb{R}}e^{-|y_{i}(t,\xi)-x|}Qdx+\frac{y_{i\xi\xi}}{4}\int_{\mathbb{R}}e^{-|y_{i}(t,\xi)-x|}(1+|y_{i}(t,\xi)-x|)Pdx\notag\\
	&-y_{i\xi}^2\left(\frac{1}{4}\int_{\mathbb{R}}\mathrm{sign}\left(y_{i}(t,\xi)-x\right)e^{-|y_{i}(t,\xi)-x|}|y_{i}(t,\xi)-x|Pdx\right)\notag\\
	&+y_{i\xi}^2\left(\frac 12\int_{\mathbb{R}}\mathrm{sign}\left(y_{i}(t,\xi)-x\right)e^{-|y_{i}(t,\xi)-x|}Qdx\right)+(b-5)\ y_{i\xi}^2\ (u_{ix}u_{ixx})\circ y_i.\notag
\end{align}

We first estimate $\|\widetilde{F}(U_{1},y_{1})-\widetilde{F}(U_{2},y_{2})\|_{L^{p}\cap L^{\infty}}$. From \eqref{1}, we see that the most complicated term is
$$
\int_{\mathbb{R}}\mathrm{sign}\left(y_{1}(\xi)-x\right)e^{-|y_{1}(\xi)-x|}|y_{1}(\xi)-x|u_{1xx}^{2}dx-\int_{\mathbb{R}}\mathrm{sign}\left(y_{2}(\xi)-x\right)e^{-|y_{2}(\xi)-x|}|y_{2}(\xi)-x|u_{2xx}^{2}dx.
$$
Combining \eqref{Uxi} and \eqref{Uxixi}, we have
\begin{align*}
u_{ixx}(t,y_i(t,\xi))=&\left[U_{i\xi\xi}(t,\xi)-u_{ix}\left(t,y_i(t,\xi)\right)y_{i\xi\xi}(t,\xi)\right]\frac{1}{y_{i\xi}(t,\xi)^2}\\
=&\frac{U_{i\xi\xi}(t,\xi)}{y_{i\xi}(t,\xi)^2}-\frac{U_{i\xi}(t,\xi)y_{i\xi\xi}(t,\xi)}{y_{i\xi}(t,\xi)^3}\\
\triangleq& V_i(t,\xi).
\end{align*}
Since $y_i\ (i=1, 2)$ is monotonically increasing, we see that $\mathrm{sign}\left(y_{i}(\xi)-y_{i}(\eta)\right)=\mathrm{sign}(\xi-\eta)$. Hence, we get
\begin{align*}
&\int_{\mathbb{R}}\mathrm{sign}\left(y_{1}(\xi)-x\right)e^{-|y_{1}(\xi)-x|}|y_{1}(\xi)-x|u_{1xx}^{2}dx-\int_{\mathbb{R}}\mathrm{sign}\left(y_{2}(\xi)-x\right)e^{-|y_{2}(\xi)-x|}|y_{2}(\xi)-x|u_{2xx}^{2}dx\\
=&\int_{\mathbb{R}}\mathrm{sign}\left(y_{1}(\xi)-y_{1}(\eta)\right)e^{-|y_{1}(\xi)-y_{1}(\eta)|}|y_{1}(\xi)-y_{1}(\eta)|V_1(\eta)^2 y_{1\eta}d\eta\\
 &-\int_{\mathbb{R}}\mathrm{sign}\left(y_{2}(\xi)-y_{2}(\eta)\right)e^{-|y_{2}(\xi)-y_{2}(\eta)|}|y_{2}(\xi)-y_{2}(\eta)|V_2(\eta)^2 y_{2\eta}d\eta\\
=&\int_{\mathbb{R}}\mathrm{sign}(\xi-\eta)\left(e^{-|y_{1}(\xi)-y_{1}(\eta)|}|y_{1}(\xi)-y_{1}(\eta)|-e^{-|y_{2}(\xi)-y_{2}(\eta)|}|y_{2}(\xi)-y_{2}(\eta)|\right)V_1(\eta)^2 y_{1\eta}d\eta\\
&-\int_{\mathbb{R}}\mathrm{sign}(\xi-\eta)e^{-|y_{2}(\xi)-y_{2}(\eta)|}|y_{2}(\xi)-y_{2}(\eta)|\left(V_2(\eta)^2 y_{2\eta}-V_1(\eta)^2 y_{1\eta}\right)d\eta\\
\triangleq& I_1-I_2.
\end{align*}
Taking advantage of \eqref{y0xi}, we see that
\begin{align*}
I_1
=&\int_{-\infty}^{\xi}e^{-(\xi-\eta)}(\xi-\eta)\left(e^{-\int_0^t(U_1(\xi)-U_1(\eta))dt'}-e^{-\int_0^t(U_2(\xi)-U_2(\eta))dt'}\right)V_1(\eta)^2 y_{1\eta}d\eta\\
 &+\int_{-\infty}^{\xi}e^{-(\xi-\eta)}V_1(\eta)^2 y_{1\eta}\bigg(e^{-\int_0^t(U_1(\xi)-U_1(\eta))dt'}\int_0^t(U_1(\xi)-U_1(\eta))dt'\\
 &\quad\quad\quad\quad\quad\quad\quad\quad\quad\quad\quad\quad-e^{-\int_0^t(U_2(\xi)-U_2(\eta))dt'}\int_0^t(U_2(\xi)-U_2(\eta))dt'\bigg)d\eta\\
 &-\int^{+\infty}_{\xi}e^{\xi-\eta}(\eta-\xi)\left(e^{\int_0^t(U_1(\xi)-U_1(\eta))dt'}-e^{\int_0^t(U_2(\xi)-U_2(\eta))dt'}\right)V_1(\eta)^2 y_{1\eta}d\eta\\
 &-\int^{+\infty}_{\xi}e^{\xi-\eta}V_1(\eta)^2 y_{1\eta}\bigg(e^{\int_0^t(U_1(\xi)-U_1(\eta))dt'}\int_0^t(U_1(\eta)-U_1(\xi))dt'\\
 &\quad\quad\quad\quad\quad\quad\quad\quad\quad\quad\quad-e^{\int_0^t(U_2(\xi)-U_2(\eta))dt'}\int_0^t(U_2(\eta)-U_2(\xi))dt'\bigg)d\eta\\
 \leq&C\|U_1-U_2\|_{L^\infty}\bigg(\int_{\mathbb{R}}e^{-|\xi-\eta|}|\xi-\eta|V_1(\eta)^2 y_{1\eta}d\eta+\int_{\mathbb{R}}e^{-|\xi-\eta|}V_1(\eta)^2 y_{1\eta}d\eta\bigg).\\
\end{align*}
Combining \eqref{bddyxi} \eqref{bddU} and the Young's inequality, we gain
\begin{align*}
\|I_1\|_{L^\infty\cap L^p}\leq C\|U_1-U_2\|_{L^\infty}.
\end{align*}
Since \eqref{bddU} gives $|\int_0^t U_2(\xi)-U_2(\eta)dt'|\leq C$, we similarly have
\begin{align*}
I_2\leq C\bigg(\int_{\mathbb{R}}e^{-|\xi-\eta|}|\xi-\eta||V_2(\eta)^2 y_{2\eta}-V_1(\eta)^2 y_{1\eta}|d\eta+\int_{\mathbb{R}}e^{-|\xi-\eta|}|V_2(\eta)^2 y_{2\eta}-V_1(\eta)^2 y_{1\eta}|d\eta\bigg).
\end{align*}
By the definition of $V_i$, we see that 
\begin{align*}
\bigg|V_2(\eta)^2 y_{2\eta}-V_1(\eta)^2 y_{1\eta}\bigg|
\leq&\bigg|\frac{U_{1\eta\eta}^2 y_{1\eta}}{y_{1\eta}^4}-\frac{U_{2\eta\eta}^2 y_{2\eta}}{y_{2\eta}^4}\bigg|+\bigg|\frac{U_{1\eta}^2 y_{1\eta\eta}^2 y_{1\eta}}{y_{1\eta}^6}-\frac{U_{2\eta}^2 y_{2\eta\eta}^2 y_{2\eta}}{y_{2\eta}^6}\bigg|\\
&+2\bigg|\frac{U_{1\eta\eta}U_{1\eta} y_{1\eta\eta}y_{1\eta}}{y_{1\eta}^5}-\frac{U_{2\eta\eta}U_{2\eta} y_{2\eta\eta}y_{2\eta}}{y_{2\eta}^5}\bigg|.
\end{align*}
\eqref{bddyxi} and \eqref{bddU} ensure that
\begin{align}
&\bigg|\frac{U_{1\eta\eta}U_{1\eta} y_{1\eta\eta}y_{1\eta}}{y_{1\eta}^5}-\frac{U_{2\eta\eta}U_{2\eta} y_{2\eta\eta}y_{2\eta}}{y_{2\eta}^5}\bigg|\label{zuocha}\\
=&\bigg|\frac{(U_{1\eta\eta}U_{1\eta}-U_{2\eta\eta}U_{2\eta}) y_{1\eta\eta}y_{1\eta}y_{2\eta}^5+U_{2\eta\eta}U_{2\eta} (y_{1\eta\eta}y_{1\eta}y_{2\eta}^5-y_{2\eta\eta}y_{2\eta}y_{1\eta}^5)}{y_{1\eta}^5 y_{2\eta}^5}\bigg|\notag\\
\leq&C\left(|U_{1\eta\eta}U_{1\eta}-U_{2\eta\eta}U_{2\eta}|+|y_{1\eta\eta}y_{1\eta}y_{2\eta}^5-y_{2\eta\eta}y_{2\eta}y_{1\eta}^5|\right)\notag\\
\leq&C\left(|U_{1\eta\eta}(U_{1\eta}-U_{2\eta})|+|(U_{1\eta\eta}-U_{2\eta\eta})U_{2\eta}|+|y_{1\eta\eta}y_{1\eta}(y_{2\eta}^5-y_{1\eta}^5)|+|(y_{1\eta\eta}y_{1\eta}-y_{2\eta\eta}y_{2\eta})y_{1\eta}^5|\right)\notag\\
\leq&C\left(|U_{1\eta}-U_{2\eta}|+|U_{1\eta\eta}-U_{2\eta\eta}|+|y_{1\eta}-y_{2\eta}|+|y_{1\eta\eta}-y_{2\eta\eta}|\right).\notag
\end{align}
Calculating the other two terms in the same way, we eventually have
\begin{align*}
|V_2(\eta)^2 y_{2\eta}-V_1(\eta)^2 y_{1\eta}|
\leq C\left(|U_{1\eta}-U_{2\eta}|+|U_{1\eta\eta}-U_{2\eta\eta}|+|y_{1\eta}-y_{2\eta}|+|y_{1\eta\eta}-y_{2\eta\eta}|\right).
\end{align*}
Hence, the Young's inequality yields
\begin{align*}
\|I_2\|_{L^\infty\cap L^p}\leq C\left(\|U_1-U_2\|_{W^{2,\infty}\cap W^{2,p}}+\|y_1-y_2\|_{W^{2,\infty}\cap W^{2,p}}\right).
\end{align*}
Therefore, we complete the estimation for the most complicated term in $\|\widetilde{F}(U_{1},y_{1})-\widetilde{F}(U_{2},y_{2})\|_{L^{p}\cap L^{\infty}}$. Since \eqref{Uxi} gives
$ u_{ix}(t,y_i(t,\xi))=U_{i\xi}(t,\xi)/y_{i\xi}(t,\xi)$, we see from \eqref{1} that the estimation for other terms in $\|\widetilde{F}(U_{1},y_{1})-\widetilde{F}(U_{2},y_{2})\|_{L^{p}\cap L^{\infty}}$ is similar and much easier, so we omit the details and obtain
$$
\|\widetilde{F}(U_{1},y_{1})-\widetilde{F}(U_{2},y_{2})\|_{L^{p}\cap L^{\infty}}\leq C\left(\|U_1-U_2\|_{W^{2,\infty}\cap W^{2,p}}+\|y_1-y_2\|_{W^{2,\infty}\cap W^{2,p}}\right).
$$

As for $\|(\widetilde{F}(U_1,y_1))_\xi-(\widetilde{F}(U_2,y_2))_\xi\|_{L^\infty\cap L^p}$ and $\|(\widetilde{F}(U_1,y_1))_{\xi\xi}-(\widetilde{F}(U_2,y_2))_{\xi\xi}\|_{L^\infty\cap L^p}$, we see from \eqref{2} and \eqref{3} that the estimation of terms involving convolution is similar to what we mention above. Hence, we omit the details and conclude that 
\begin{align*}
\|\widetilde{F}(U_{1},y_{1})-\widetilde{F}(U_{2},y_{2})\|_{W^{2,\infty}\cap W^{2,p}}\leq C\left(\|U_1-U_2\|_{W^{2,\infty}\cap W^{2,p}}+\|y_1-y_2\|_{W^{2,\infty}\cap W^{2,p}}\right).
\end{align*}
\end{proof}

\section{Global existence}\label{Global existence}
In this section, we study the blow up criteria for \eqref{eq1} and conditions for global existence. Firstly, we give the following blow up criteria, which will be used in the proof of global existence and ill-posedness.
\begin{prop}\label{blow up criterion}
Let $1\leq p,r\leq\infty$ and $s$ satisfies \eqref{s}. Assume that $u$ is the solution of \eqref{eq1} with initial data $u_0\in B_{p, r}^s$. Define the lifespan $T^\ast_{u_0}$ of $u$ as the supremum of positive times $T$ such that \eqref{eq1} has a solution $u\in E^s_{p,r}(T)$ on $[0,T]\times\mathbb{R}$. If $T^\ast_{u_0}<\infty$, then we have
\begin{equation}\label{blow1}
	\left\{\begin{array}{ll}
	\int_0^{T^\ast_{u_0}}\|\partial_{x}u(\tau)\|_{L^{\infty}}+\|\partial_{xx}u(\tau)\|_{L^{\infty}}d\tau=\infty&\text{if}\quad b\neq\frac 53, \\
	\int_0^{T^\ast_{u_0}}\|\partial_{x}u(\tau)\|_{L^{\infty}}d\tau=\infty& \text{if}\quad b=\frac 53\\
\end{array}\right.
\end{equation}
and 
\begin{equation}\label{blow2}
	\left\{\begin{array}{ll}
		\int_0^{T^\ast_{u_0}}\|u(\tau)\|_{B^{2}_{\infty,\infty}}d\tau=\infty&\text{if}\quad b\neq\frac 53, \\
		\int_0^{T^\ast_{u_0}}\|u(\tau)\|_{B^{1}_{\infty,\infty}}d\tau=\infty& \text{if}\quad b=\frac 53.\\
	\end{array}\right.
\end{equation}
\end{prop}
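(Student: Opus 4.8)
I would argue by the usual continuation principle. Since the local existence times in Theorems \ref{the1}--\ref{the2} depend only on the size of the data in $B^{s}_{p,r}$, it suffices to prove that, if the time integral appearing in \eqref{blow1} (resp.\ \eqref{blow2}) were finite over $[0,T^{\ast}_{u_0})$, then $\sup_{t<T^{\ast}_{u_0}}\|u(t)\|_{B^{s}_{p,r}}<\infty$, which contradicts the maximality of $T^{\ast}_{u_0}$. Every estimate rests on Lemma \ref{priori estimate} applied to the transport form $\partial_t u+u\partial_x u=-F(u)$ of \eqref{eq1}: because the transported field equals the velocity, $V'(t)=\|\partial_x u(t)\|_{L^\infty}$ for \emph{every} index $\sigma>0$, so
\begin{equation*}
\|u(t)\|_{B^{\sigma}_{p,r}}\le\|u_0\|_{B^{\sigma}_{p,r}}+C\int_0^t\|\partial_x u(\tau)\|_{L^\infty}\,\|u(\tau)\|_{B^{\sigma}_{p,r}}\,d\tau+\int_0^t\|F(u)(\tau)\|_{B^{\sigma}_{p,r}}\,d\tau .
\end{equation*}
What is needed is to bound $\|F(u)\|_{B^{\sigma}_{p,r}}$ by (a few low-order quantities) $\times\,\|u\|_{B^{\sigma}_{p,r}}$ and close a Gronwall (or Osgood) inequality; the structural inputs are that $\partial_xP(D)$ and $\partial_x^{3}P(D)$ are $S^{-3}$- and $S^{-1}$-multipliers (Proposition \ref{prop1}(7)) and that $\partial_x$ commutes with $P(D)$, so each $F_i$ may also be written as $P(D)$ acting on a product of derivatives of $u$.

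For \eqref{blow1} I would work in two tiers. \emph{Tier one} (controlling $\|u\|_{L^\infty}$): fix a low index $\sigma$ with $1+\frac1p<\sigma\le s$ and $\sigma\ge2$ when $b\neq\frac53$. Rewriting $F_1$ as $b\,P(D)(uu_x)$ (to avoid a Riccati term), keeping $F_2,F_4$ in the form $\partial_xP(D)(u_x^2),\partial_x^{3}P(D)(u_x^2)$ and $F_3$ in the form $\frac{5-3b}{2}\alpha^2\beta^2\partial_xP(D)(u_{xx}^2)$, and using the smoothing of $P(D)$, $\partial_xP(D)$, $\partial_x^{3}P(D)$, the embedding $L^p\hookrightarrow B^{-1}_{p,1}$, the product law (Lemma \ref{product}) and $B^{\sigma}_{p,1}\hookrightarrow L^\infty$, one gets $\|F(u)\|_{B^{\sigma}_{p,1}}\le C\big(\|\partial_x u\|_{L^\infty}+\|\partial_{xx}u\|_{L^\infty}\big)\|u\|_{B^{\sigma}_{p,1}}$ (only $\|\partial_x u\|_{L^\infty}$ if $b=\frac53$, since then $F_3\equiv0$), with no $\|u\|_{L^\infty}$ and no higher $L^p$-based norm of $u$ on the right. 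Gronwall then bounds $\|u(t)\|_{B^{\sigma}_{p,1}}$ on $[0,T^{\ast}_{u_0})$, hence $\|u(t)\|_{L^\infty}$ there. \emph{Tier two} (regularity $s$): the smoothing of $\partial_xP(D),\partial_x^{3}P(D)$, the product law and the embeddings $B^{s-2}_{p,r},B^{s-1}_{p,r}\hookrightarrow L^\infty$ (guaranteed by \eqref{s}) give $\|F(u)\|_{B^{s}_{p,r}}\le C\big(\|u\|_{L^\infty}+\|\partial_x u\|_{L^\infty}+\|\partial_{xx}u\|_{L^\infty}\big)\|u\|_{B^{s}_{p,r}}$ (no $\|\partial_{xx}u\|_{L^\infty}$ if $b=\frac53$); inserting the boundedness of $\|u\|_{L^\infty}$ from tier one, the hypothesis that the integral in \eqref{blow1} is finite, and $T^{\ast}_{u_0}<\infty$, Gronwall yields the bound on $\|u(t)\|_{B^{s}_{p,r}}$ and the contradiction.

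For \eqref{blow2} I would stay at regularity $s$ and bridge the gap between $\|u\|_{B^{2}_{\infty,\infty}}$ and $\|\partial_{xx}u\|_{L^\infty}$ via the logarithmic interpolation inequality of Lemma \ref{log}(2). Choosing $\varepsilon\in(0,\min\{1,s-\frac1p-2\})$ (possible since $s>2+\frac1p$) and using $B^{s}_{p,r}\hookrightarrow B^{s-1/p}_{\infty,\infty}\hookrightarrow B^{2+\varepsilon}_{\infty,\infty}$ together with $\|u\|_{L^\infty}+\|\partial_x u\|_{L^\infty}\le C\|u\|_{B^{2}_{\infty,\infty}}$, one obtains
\begin{equation*}
\|u\|_{L^\infty}+\|\partial_x u\|_{L^\infty}+\|\partial_{xx}u\|_{L^\infty}\le C\Big(1+\|u\|_{B^{2}_{\infty,\infty}}\,\log\big(e+\|u\|_{B^{s}_{p,r}}\big)\Big).
\end{equation*}
Feeding this and the tier-two estimate for $\|F(u)\|_{B^{s}_{p,r}}$ into the a priori estimate, and setting $M(t)=\sup_{\tau\le t}\|u(\tau)\|_{B^{s}_{p,r}}$, $Y(t)=\log(e+M(t))$, one arrives at $Y'(t)\le C+C\|u(t)\|_{B^{2}_{\infty,\infty}}Y(t)$; since $\int_0^{T^{\ast}_{u_0}}\|u\|_{B^{2}_{\infty,\infty}}\,d\tau<\infty$ and $T^{\ast}_{u_0}<\infty$, Gronwall bounds $Y$, hence $\|u(t)\|_{B^{s}_{p,r}}$, on $[0,T^{\ast}_{u_0})$ — contradiction. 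The case $b=\frac53$ is identical with $B^{2}_{\infty,\infty}$, $\partial_{xx}u$ replaced by $B^{1}_{\infty,\infty}$, $\partial_x u$.

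I expect the main obstacle to be tier one (the low-order control of $\|u\|_{L^\infty}$) in the case $b\neq\frac53$. A crude treatment fails for two reasons: (i) the genuinely quadratic term $F_1\sim\partial_xP(D)(u^2)$ produces $\|u\|_{L^\infty}\|u\|_{B^{\sigma}_{p,r}}$, hence a Riccati nonlinearity once one only has $\|u\|_{L^\infty}\le C\|u\|_{B^{\sigma}_{p,r}}$; (ii) the term $F_3\sim\partial_xP(D)(u_{xx}^2)$, if integrated by parts, introduces $u_{xxx}$, which need not lie in $L^p$ when $s<3$, and if left alone a careless bound gives $\|\partial_{xx}u\|_{L^\infty}^2$, which is only $L^1$ in time rather than $L^\infty$. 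The remedies are precisely (i) to write $F_1=b\,P(D)(uu_x)$ and bound $\|uu_x\|_{L^p}\le\|\partial_x u\|_{L^\infty}\|u\|_{L^p}\le C\|\partial_x u\|_{L^\infty}\|u\|_{B^{\sigma}_{p,1}}$, and (ii) to keep $F_3$ as it stands and bound $\|\partial_xP(D)(u_{xx}^2)\|_{B^{\sigma}_{p,1}}\le C\|u_{xx}^2\|_{B^{\sigma-3}_{p,1}}\le C\|u_{xx}^2\|_{L^p}\le C\|\partial_{xx}u\|_{L^\infty}\|\partial_{xx}u\|_{L^p}\le C\|\partial_{xx}u\|_{L^\infty}\|u\|_{B^{\sigma}_{p,1}}$, never invoking $u_{xxx}$ (here $\sigma\ge2$ is used so that $\|\partial_{xx}u\|_{L^p}\le C\|u\|_{B^{\sigma}_{p,1}}$). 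The remaining ingredients — the analogous but easier $F$-estimates at regularity $s$, the elementary arithmetic of the logarithmic inequality, and the Gronwall/Osgood steps — are routine.
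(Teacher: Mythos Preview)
Your argument is correct, and for \eqref{blow2} it coincides with the paper's: both apply the logarithmic interpolation inequality of Lemma \ref{log}(2) to replace $\|u\|_{W^{2,\infty}}$ (resp.\ $\|u\|_{W^{1,\infty}}$) by $C\bigl(1+\|u\|_{B^{2}_{\infty,\infty}}\log(e+\|u\|_{B^{s}_{p,r}})\bigr)$ and then run an Osgood/Gronwall step on $\log(e+\|u(t)\|_{B^{s}_{p,r}})$. For \eqref{blow1}, however, you take a genuinely different route to control $\|u\|_{L^{\infty}}$. The paper does this pointwise along characteristics: it introduces the flow $y$ of \eqref{y}, reads off $\partial_t\bigl(\partial_x^{k}u(t,y)\bigr)$ for $k=0,1,2$, bounds $\|F_i(u)\|_{L^\infty}$ and $\|\partial_x F_i(u)\|_{L^\infty}$, $\|\partial_{xx}F_i(u)\|_{L^\infty}$ by Young's inequality, and assembles the closed estimate $\|u(t)\|_{W^{2,\infty}}\le\|u_0\|_{W^{2,\infty}}\exp\bigl(C\int_0^t\|u_x\|_{L^\infty}+\|u_{xx}\|_{L^\infty}\,d\tau\bigr)$ (and analogously for $W^{1,\infty}$ when $b=\tfrac53$). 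You instead stay entirely inside the Littlewood--Paley/transport framework: you run Lemma \ref{priori estimate} a second time at a lower index $\sigma$ (with $r=1$), and the key observation that writing $F_1=b\,P(D)(uu_x)$ kills the would-be Riccati factor $\|u\|_{L^\infty}$ lets you close a Gronwall for $\|u\|_{B^{\sigma}_{p,1}}$ (hence $\|u\|_{L^\infty}$) with coefficient $\|u_x\|_{L^\infty}+\|u_{xx}\|_{L^\infty}$. The paper's Lagrangian approach is more elementary and gives direct $L^\infty$ control of each derivative without any product-law bookkeeping; your approach is cleaner in that it never leaves Besov spaces and avoids introducing the flow, at the price of a careful choice of $\sigma$ (you need $2\le\sigma<\min\{s,3\}$ when $b\neq\tfrac53$ so that both $\|u_{xx}\|_{L^p}\lesssim\|u\|_{B^{\sigma}_{p,1}}$ and $L^p\hookrightarrow B^{\sigma-3}_{p,1}$ hold) and the rewriting trick for $F_1$.
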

\begin{proof}
We focus on the case when $b\neq\frac 53$.
Using Lemma \ref{priori estimate}, we can obtain
\begin{align*}
	\|u(t)\|_{B^{s}_{p,r}}\leq\left(\|u_{0}\|_{B^{s}_{p,r}}+\int_0^t  e^{-C\int_0^{t'}\|\partial_{x}u(\tau)\|_{L^\infty}d\tau}\|F(u)\|_{B^{s}_{p,r}}dt'\right)e^{C\int_0^t \|\partial_{x}u(\tau)\|_{L^\infty}d\tau}.
\end{align*}
Combining Proposition \ref{prop1}, Lemma \ref{product} and \eqref{eqF}, we have
$$
\|F(u)\|_{B^{s}_{p,r}}\leq C(\|u\|_{L^\infty}+\|u_x\|_{L^\infty}+\|u_{xx}\|_{L^\infty})\|u\|_{B^{s}_{p,r}}.
$$
Therefore, 
\begin{align*}
	e^{-C\int_0^t \|\partial_{x}u(\tau)\|_{L^\infty}d\tau} \|u(t)\|_{B^{s}_{p,r}}\leq\|u_{0}\|_{B^{s}_{p,r}}+C\int_0^t  e^{-C\int_0^{t'}\|\partial_{x}u(\tau)\|_{L^\infty}d\tau}\|u(t')\|_{B^{s}_{p,r}}\|u(t')\|_{W^{2,\infty}}dt'.
\end{align*}
According to the Gronwall lemma, we deduce that
\begin{align}
	\|u(t)\|_{B_{p,r}^{s}}\leq\|u_{0}\|_{B_{p,r}^{s}}\exp\left(C\int_{0}^{t}\|u(\tau)\|_{W^{2,\infty}}d\tau\right).\label{4.1}
\end{align}

Let $y$ be the solution of \eqref{y}. Then \eqref{eq1} gives 
$$
\partial_{t}(u(t,y(t,x)))=-F(u)(t,y(t,x)),
$$
which implies 
$$
\|u(t)\|_{L^\infty}\leq\|u_0\|_{L^\infty}+\int_0^t\|F(u)\|_{L^\infty}d\tau.
$$
Combining \eqref{p}, \eqref{G} and the Young's inequality, we see that
\begin{align*}
	&\|F_1(u)\|_{L^\infty}\leq C\|P(D)\partial_x(u^2)\|_{L^\infty}\leq C\|uu_x\|_{L^\infty}\leq C\|u\|_{L^\infty}\|u_x\|_{L^\infty},\\
	&\|F_2(u)\|_{L^\infty}\leq C\|\partial_x P(D)(u_x^2)\|_{L^\infty}\leq C\|u^2_x\|_{L^\infty}\leq C\|u_x\|^2_{L^\infty},\\
	&\|F_3(u)\|_{L^\infty}\leq C\|\partial_x P(D)(u_{xx}^2)\|_{L^\infty}\leq C\|u^2_{xx}\|_{L^\infty}\leq C\|u_{xx}\|^2_{L^\infty},\\
	&\|F_4(u)\|_{L^\infty}\leq C\|\partial_x^3 P(D)(u_x^2)\|_{L^\infty}\leq C\|u^2_x\|_{L^\infty}\leq C\|u_x\|^2_{L^\infty}.
\end{align*}
Therefore, we obtain
\begin{align}
	\|u(t)\|_{L^\infty}\leq\|u_0\|_{L^\infty}+C\int_0^t\left(\|\partial_{x}u\|_{L^{\infty}}+\|\partial_{xx}u\|_{L^{\infty}}\right)\|u\|_{W^{2,\infty}}d\tau.\label{a}
\end{align}
Similarly, by differentiating \eqref{eq1} once with respect to $x$, we have
\begin{align*}
	\left\{\begin{array}{l}
		\partial_{t}(u_x)+u(u_{x})_x+u_x^2+\partial_{x}F(u)=0,\\
		u_x|_{t=0}=\partial_{x}u_{0},
	\end{array}\right.
\end{align*}
which implies
$$
\partial_{t}(u_x(t,y(t,x)))=-(u_x^2+\partial_{x}F(u))(t,y(t,x))
$$
and gives
\begin{align}
	\|\partial_{x}u(t)\|_{L^\infty}
	\leq\|\partial_{x}u_0\|_{L^\infty}+C\int_0^t\left(\|\partial_{x}u\|_{L^{\infty}}+\|\partial_{xx}u\|_{L^{\infty}}\right)\|u\|_{W^{2,\infty}}d\tau.\label{b}
\end{align}
Differentiating \eqref{eq1} twice with respect to $x$ yields
\begin{align*}
	\left\{\begin{array}{l}
		\partial_{t}(u_{xx})+u(u_{xx})_x+3u_x u_{xx}+\partial_{xx}F(u)=0,\\
		u_{xx}|_{t=0}=\partial_{xx}u_{0},
	\end{array}\right.
\end{align*}
so we similarly deduce that
\begin{align}
	\|\partial_{xx}u(t)\|_{L^\infty}
	\leq\|\partial_{xx}u_0\|_{L^\infty}+C\int_0^t\left(\|\partial_{x}u\|_{L^{\infty}}+\|\partial_{xx}u\|_{L^{\infty}}\right)\|u\|_{W^{2,\infty}}d\tau.\label{c}
\end{align}
Combining \eqref{a}, \eqref{b} and \eqref{c}, we obtain
\begin{align}
\|u(t)\|_{W^{2,\infty}}\leq\|u_{0}\|_{W^{2,\infty}}\exp\left(C\int_{0}^{t}\|\partial_{x}u(\tau)\|_{L^{\infty}}+\|\partial_{xx}u(\tau)\|_{L^{\infty}}d\tau\right).\label{4.2}
\end{align}

Now we prove \eqref{blow1} when $b\neq\frac 53$. From Theorem \ref{the1}, we see that there exists a $T^\ast>0$ such that \eqref{eq1} has a unique solution $u\in\underset{T<T^\ast}{\bigcap}E^s_{p,r}(T)$. Then we have $T^\ast\leq T^\ast_{u_0}<\infty$. We want to show that if
$$
\int_0^{T^\ast}\|\partial_{x}u(\tau)\|_{L^{\infty}}+\|\partial_{xx}u(\tau)\|_{L^{\infty}}d\tau<\infty,
$$
then $T^\ast<T^\ast_{u_0}$.
Indeed, taking advantage of \eqref{4.1} and \eqref{4.2}, we see that there exists a constant $M_{T^\ast}>0$ such that for all $t\in[0,T^\ast),\ \|u(t)\|_{B^s_{p,r}}\leq M_{T^\ast}$. Let $\varepsilon>0$ be such that $2C^2M_{T^\ast}\varepsilon<1$, where $C$ is the constant in the proof of Theorem \ref{the1}. We then have a solution $\widetilde{u}\in E^s_{p,r}(\varepsilon)$ of \eqref{eq1} with initial data $u(T^\ast-\varepsilon/2)$. By uniqueness, we have $\widetilde{u}(t)=u(t+T^{\ast}-\varepsilon/2)$ on $[0,\varepsilon/2)$, which means that $\widetilde{u}$ extends the solution $u$ beyond $T^\ast$. Hence, we obtain $T^\ast<T^\ast_{u_0}$ and conclude that \eqref{blow1} holds when $b\neq\frac 53$.

Next, we prove \eqref{blow2} when $b\neq\frac 53$. Thanks to 
$$
x\left(1+\log\frac{x_0}{x}\right)\leq 1+x\log(e+x_0),\ \forall\ 0\leq x\leq 2x_0,
$$
we deduce from Lemma \ref{log} and Proposition \ref{prop1} that
\begin{align*}
\|u\|_{W^{2,\infty}}
\leq C\bigg(1+\|u\|_{B^2_{\infty,\infty}}\log\left(e+\|u\|_{B^s_{p,r}}\right)\bigg),
\end{align*}
which together with \eqref{4.1} gives 
\begin{align*}
\|u(t)\|_{B_{p,r}^{s}}\leq\|u_{0}\|_{B_{p,r}^{s}}\exp\left(C\int_{0}^{t}1+\|u(\tau)\|_{B^2_{\infty,\infty}}\log\left(e+\|u(\tau)\|_{B^s_{p,r}}\right)d\tau\right).
\end{align*}
Therefore, we have
\begin{align*}
\log\left(e+\|u(t)\|_{B_{p,r}^s}\right)\leq\log\left(e+\|u_0\|_{B_{p,r}^s}\right)+Ct+C\int_{0}^{t}\|u\|_{B_{\infty,\infty}^2}\log\left(e+\|u\|_{B_{p,r}^s}\right)d\tau.
\end{align*}
The Gronwall lemma thus yields
\begin{align*}
\log\left(e+\|u(t)\|_{B_{p,r}^{s}}\right)\leq\left(\log\left(e+\|u_{0}\|_{B_{p,r}^{s}}\right)+Ct\right)e^{C\int_{0}^{t}\|u\|_{B_{\infty,\infty}^{2}}d\tau}.
\end{align*}
If $\int_0^{T^\ast_{u_0}}\|u(\tau)\|_{B^{2}_{\infty,\infty}}d\tau<\infty$, then we similarly deduce that $u$ can be extended beyond $T^\ast_{u_0}$, which stands in contradiction to the definition of $T^\ast_{u_0}$. Hence, we conclude that \eqref{blow2} holds when $b\neq\frac 53$.

Following the same line, we can prove that when $b=\frac 53$,
\begin{align}
	&\|u(t)\|_{B_{p,r}^{s}}\leq\|u_{0}\|_{B_{p,r}^{s}}\exp\left(C\int_{0}^{t}\|u(\tau)\|_{W^{1,\infty}}d\tau\right),\label{4.3}\\
	&\|u(t)\|_{W^{1,\infty}}\leq\|u_{0}\|_{W^{1,\infty}}\exp\left(C\int_{0}^{t}\|\partial_{x}u(\tau)\|_{L^{\infty}}d\tau\right).\label{4.4}
\end{align}
Taking advantage of \eqref{4.3} and \eqref{4.4}, we can prove that \eqref{blow1} and \eqref{blow2} hold when $b=\frac 53$ in the same way, so we omit the details here. 
\end{proof}

\begin{theo}\label{global}
Suppose $u_0\in H^s(\mathbb{R})\cap W^{4,\frac 1b}(\mathbb{R})$ with $0\leq b\leq 1,\ s>\frac 52$. Then the corresponding solution $u$ of \eqref{eq1} exists globally in time.
\end{theo}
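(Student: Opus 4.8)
The plan is to combine the continuation criterion of Proposition \ref{blow up criterion} with the conservation law $\|m(t)\|_{L^{1/b}}=\|m_0\|_{L^{1/b}}$ announced in the introduction. Since $0\le b\le 1$ we have $b\neq\frac53$, and $s>\frac52=\max\{\frac52,2+\frac12\}$, so Theorem \ref{the1} with $p=r=2$ yields a unique maximal solution $u\in E^{s}_{2,2}(T)$ for every $T<T^\ast_{u_0}$; by Proposition \ref{blow up criterion}, if $T^\ast_{u_0}<\infty$ then $\int_0^{T^\ast_{u_0}}\big(\|\partial_x u(\tau)\|_{L^\infty}+\|\partial_{xx}u(\tau)\|_{L^\infty}\big)\,d\tau=\infty$. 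Thus it is enough to prove that $\|u(t)\|_{W^{2,\infty}}$ stays bounded on $[0,T^\ast_{u_0})$.

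For $t<T^\ast_{u_0}$ we have $u\in\mathcal C([0,t];H^s)\hookrightarrow\mathcal C([0,t];C^2(\mathbb R))$, so the flow $y(t,\cdot)$ defined by \eqref{y} is a $C^1$ increasing diffeomorphism of $\mathbb R$ with $y_\xi(t,\xi)=\exp\big(\int_0^t u_x(\tau,y(\tau,\xi))\,d\tau\big)>0$. Since the first equation of \eqref{eq0} reads $m_t+um_x=-bu_xm$, one computes along the flow
\begin{equation*}
\frac{d}{dt}\Big(m(t,y(t,\xi))\,y_\xi(t,\xi)^{b}\Big)=\big(-bu_xm\big)(t,y)\,y_\xi^{b}+m(t,y)\,b\,y_\xi^{b-1}\,u_x(t,y)\,y_\xi=0,
\end{equation*}
hence $m(t,y(t,\xi))\,y_\xi(t,\xi)^{b}=m_0(\xi)$. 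As $u_0\in W^{4,1/b}$ forces $m_0\in L^{1/b}$, the change of variables $x=y(t,\xi)$ gives
\begin{equation*}
\|m(t)\|_{L^{1/b}}^{1/b}=\int_{\mathbb R}|m(t,y(t,\xi))|^{1/b}\,y_\xi(t,\xi)\,d\xi=\int_{\mathbb R}|m_0(\xi)|^{1/b}\,d\xi=\|m_0\|_{L^{1/b}}^{1/b},
\end{equation*}
so $\|m(t)\|_{L^{1/b}}=\|m_0\|_{L^{1/b}}$ for every $t<T^\ast_{u_0}$ (for $b=0$ this is the transport identity $\|m(t)\|_{L^\infty}=\|m_0\|_{L^\infty}$).

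Next I would recover $u$ from $m$. Writing $u=P(D)m=G\ast m$, the kernel $G$ is a fixed linear combination of functions of the form $\tfrac1{2a}e^{-|x|/a}$ (and, when $\alpha=\beta$, of $\tfrac14 e^{-|x|/|\alpha|}(|\alpha|+|x|)$ up to a constant), so $G,G',G''$ decay exponentially and belong to every $L^q(\mathbb R)$, $1\le q\le\infty$; in particular $G''\in L^\infty$ because $\widehat{G''}(\xi)=-\xi^2\big((1+\alpha^2\xi^2)(1+\beta^2\xi^2)\big)^{-1}\in L^1(\mathbb R)$ (this is where $\alpha\beta\neq0$ is used). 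Since $u_x=G'\ast m$ and $u_{xx}=G''\ast m$, Young's inequality with the conjugate exponents $\tfrac1b$ and $\tfrac1{1-b}$ gives, for all $t<T^\ast_{u_0}$,
\begin{equation*}
\|u(t)\|_{W^{2,\infty}}\le\big(\|G\|_{L^{1/(1-b)}}+\|G'\|_{L^{1/(1-b)}}+\|G''\|_{L^{1/(1-b)}}\big)\|m(t)\|_{L^{1/b}}=C\|m_0\|_{L^{1/b}},
\end{equation*}
independently of $t$ (for $b=1$ use $\|G''\ast m\|_{L^\infty}\le\|G''\|_{L^\infty}\|m\|_{L^1}$, for $b=0$ use $\|G''\|_{L^1}\|m\|_{L^\infty}$). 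Hence $\int_0^{T^\ast_{u_0}}(\|\partial_x u\|_{L^\infty}+\|\partial_{xx}u\|_{L^\infty})\,d\tau\le 2C\|m_0\|_{L^{1/b}}\,T^\ast_{u_0}<\infty$ when $T^\ast_{u_0}<\infty$, which contradicts the continuation criterion; therefore $T^\ast_{u_0}=\infty$ and $u$ is global.

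The hard part is making the conservation law rigorous at the available regularity: for $\tfrac52<s<4$ one only knows a priori that $m(t)\in\mathcal C([0,t];H^{s-4})$, which may be merely a distribution, so the pointwise manipulations above are not literally justified. I would circumvent this by approximating $u_0$ in $H^s\cap W^{4,1/b}$ by smooth rapidly decreasing data $u_0^{(n)}$, running the (now classical) argument for the corresponding solutions $u^{(n)}$, and passing to the limit via the continuity of the solution map in Theorem \ref{the1} together with $\|m^{(n)}(t)\|_{L^{1/b}}=\|m_0^{(n)}\|_{L^{1/b}}$; equivalently one checks directly that $m(t,\cdot)$ is exactly the pushforward of $m_0\,y_\xi(t,\cdot)^{-b}\in L^{1/b}$ under $y(t,\cdot)$, hence an $L^{1/b}$ function. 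The remaining ingredients — the kernel bounds and Young's inequality — are routine.
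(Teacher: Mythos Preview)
Your proof is correct and follows essentially the same strategy as the paper: establish the conservation law $\|m(t)\|_{L^{1/b}}=\|m_0\|_{L^{1/b}}$ via the Lagrangian identity $m(t,y(t,\xi))\,y_\xi(t,\xi)^{b}=m_0(\xi)$, then bound $\|u(t)\|_{W^{2,\infty}}$ by $C\|m_0\|_{L^{1/b}}$ and invoke the blow-up criterion of Proposition~\ref{blow up criterion}. The only cosmetic difference is that the paper phrases the last step as the Sobolev embedding $W^{4,1/b}\hookrightarrow W^{2,\infty}$ rather than Young's inequality on the explicit kernel, and it does not address the low-regularity issue you raise (and resolve by approximation).
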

\begin{proof}
Let $y$ be the solution of \eqref{y} and $T>0$ be the maximal existence time of $u$. Taking advantage of \eqref{eq0} and \eqref{y}, we have
\begin{align*}
&\frac{\partial}{\partial t}\left(m(t,y(t,x))\cdot(\partial_{x}y(t,x))^b\right)\\
=&(\partial_t m)(t,y(t,x))\cdot(\partial_{x}y(t,x))^b+(\partial_x m)(t,y(t,x))\cdot\partial_{t}y(t,x)\cdot(\partial_{x}y(t,x))^b\\
&+m(t,y(t,x))\cdot b(\partial_{x}y(t,x))^{b-1}\partial_{t}\partial_{x}y(t,x)\\
=&(\partial_{x}y(t,x))^b\cdot(\partial_{t}m+\partial_{x}m\cdot u+bm\partial_{x}u)(t, y(t,x))\\
=&0,
\end{align*}
which implies
$$
m(t,y(t,x))\cdot(\partial_{x}y(t,x))^b=m_0(x).
$$
Thanks to Step 2 in the proof of Theorem \ref{the2}, we see that $\partial_{x}y\left(t,x\right)=\exp\left({\int_{0}^{t}(\partial_{x}u)(\tau,y(\tau,x))d\tau}\right)>0$ and that $y(t,\cdot)$ is an increasing diffeomorphism. Consequently, if $0<b\leq1$, we can obtain
\begin{align*}
\|m(t,\cdot)\|_{L^{\frac 1b}}^{\frac 1b}=&\int_{\mathbb{R}}|m(t,y(t,x))|^{\frac 1b}dy(t,x)\\
=&\int_{\mathbb{R}}|m(t,y(t,x))|^{\frac 1b}\partial_{x}y(t,x)dx\\
=&\int_{\mathbb{R}}|m_0(x)|^{\frac 1b}dx\\
=&\|m_0\|_{L^{\frac 1b}}^{\frac 1b}.
\end{align*}
If $b=0$, it's obvious that $\|m(t,\cdot)\|_{L^{\infty}}=\|m_0\|_{L^{\infty}}$. Since $m=(1-\alpha^2\partial_{xx})(1-\beta^2\partial_{xx})u$, by the Sobolev imbedding theorem, we see that for all $t\in [0,T)$, 
\begin{align*}
\|u(t,\cdot)\|_{W^{2,\infty}}\leq C\|u(t,\cdot)\|_{W^{4,\frac 1b}}\leq C\|m(t,\cdot)\|_{L^{\frac 1b}}=C\|m_0\|_{L^{\frac 1b}}\leq C\|u_0\|_{W^{4,\frac 1b}}.
\end{align*}
Hence, by Proposition \ref{blow up criterion}, we conclude that $u$ exists globally in time.
\end{proof}

\section{Ill-posedness}\label{Ill-posedness}
In this section, we provide two ill-posedness results when $b=\frac 53$. Firstly, we prove that the Cauchy problem \eqref{eq1} is ill-posed in $B^1_{\infty,1}$ in the sense of norm inflation. Let $B^0_{\infty,\infty,1}$ be a Banach space equipped with the norm $\|f\|_{B^0_{\infty,\infty,1}}=\underset{j\geq-1}{\sup}(j+2)^{1+\frac{1}{100}}\|\Delta_j f\|_{L^\infty}$. We have the following lemmas.

\begin{lemm}\label{illlm1}\cite{DAIbushiding,GUObushiding}
For any $f\in B^0_{\infty,1}\cap B^0_{\infty,\infty,1},\ g\in B^0_{\infty,1}$, we have
\begin{align*}
&\|f^{2}\|_{B_{\infty,\infty,1}^{0}}\leq C\|f\|_{B_{\infty,1}^{0}}\|f\|_{B_{\infty,\infty,1}^{0}},\\
&\|f^{2}\|_{B_{\infty,1}^{0}}\leq C\|f\|_{B_{\infty,1}^{0}}\|f\|_{B_{\infty,\infty,1}^{0}},\\
&\|fg\|_{B_{\infty,1}^{0}}\leq C\|f\|_{B_{\infty,1}^{0}\cap B_{\infty,\infty,1}^{0}}\|g\|_{B_{\infty,1}^{0}}.
\end{align*}
\end{lemm}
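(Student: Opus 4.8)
The plan is to run all three inequalities through the Bony decomposition $fg=T_fg+T_gf+R(f,g)$ (so that $f^{2}=2T_ff+R(f,f)$), estimating the paraproducts $T$ and the remainder $R$ separately, and to lean on the two elementary embeddings $B^{0}_{\infty,\infty,1}\hookrightarrow L^{\infty}$ and $B^{0}_{\infty,\infty,1}\hookrightarrow B^{0}_{\infty,1}$. Both follow from the same fact: since $1+\frac1{100}>1$ the series $\sum_{j\ge-1}(j+2)^{-1-\frac1{100}}$ converges, so $\|f\|_{L^{\infty}}\le\sum_{j\ge-1}\|\Delta_jf\|_{L^{\infty}}\le\big(\sum_{j\ge-1}(j+2)^{-1-\frac1{100}}\big)\|f\|_{B^{0}_{\infty,\infty,1}}$, and the same sum bounds $\|f\|_{B^{0}_{\infty,1}}$. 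Hence any factor I wish to place in $L^{\infty}$ is controlled by either $\|\cdot\|_{B^{0}_{\infty,1}}$ or $\|\cdot\|_{B^{0}_{\infty,\infty,1}}$, whichever is convenient.

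For the paraproducts I would use that $S_{k-1}f\,\Delta_kg$ has Fourier support in a fixed dilate of an annulus, so $\Delta_j(S_{k-1}f\,\Delta_kg)=0$ unless $|j-k|\le4$, whence $\|\Delta_j(T_fg)\|_{L^{\infty}}\le C\|f\|_{L^{\infty}}\sum_{|k-j|\le4}\|\Delta_kg\|_{L^{\infty}}$. Summing over $j$ gives $\|T_fg\|_{B^{0}_{\infty,1}}\le C\|f\|_{L^{\infty}}\|g\|_{B^{0}_{\infty,1}}\le C\|f\|_{B^{0}_{\infty,1}\cap B^{0}_{\infty,\infty,1}}\|g\|_{B^{0}_{\infty,1}}$, and the same for $T_gf$; this settles the $T$-contributions to the second and third inequalities. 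For the first inequality one multiplies by $(j+2)^{1+\frac1{100}}$ and, since $j\sim k$ on the support, replaces it by $(k+2)^{1+\frac1{100}}$ before summing over the at most nine indices $|k-j|\le4$, obtaining $\|T_ff\|_{B^{0}_{\infty,\infty,1}}\le C\|f\|_{L^{\infty}}\|f\|_{B^{0}_{\infty,\infty,1}}\le C\|f\|_{B^{0}_{\infty,1}}\|f\|_{B^{0}_{\infty,\infty,1}}$.

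The only genuinely delicate point is the remainder, because at the endpoint $s=0,\ r=1$ the usual estimate for $R$ diverges logarithmically, and the weight exponent $1+\frac1{100}$ is exactly what repairs this. Writing $R(f,g)=\sum_k\Delta_kf\,\widetilde{\Delta}_kg$ with $\widetilde{\Delta}_k\triangleq\Delta_{k-1}+\Delta_k+\Delta_{k+1}$, the spectrum of $\Delta_kf\,\widetilde{\Delta}_kg$ lies in a ball of radius $\sim2^{k}$, so $\Delta_jR(f,g)=\sum_{k\ge j-N}\Delta_j(\Delta_kf\,\widetilde{\Delta}_kg)$ for a fixed $N$, giving $\|\Delta_jR(f,g)\|_{L^{\infty}}\le C\sum_{k\ge j-N}\|\Delta_kf\|_{L^{\infty}}\|\widetilde{\Delta}_kg\|_{L^{\infty}}$. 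Summing over $j\ge-1$ and exchanging the order of summation introduces the factor $\#\{j\ge-1:\ j\le k+N\}\le k+N+2$, so
$$\|R(f,g)\|_{B^{0}_{\infty,1}}\le C\sum_k(k+N+2)\|\Delta_kf\|_{L^{\infty}}\|\widetilde{\Delta}_kg\|_{L^{\infty}}.$$
Since $(k+N+2)\|\Delta_kf\|_{L^{\infty}}\le(k+N+2)(k+2)^{-1-\frac1{100}}\|f\|_{B^{0}_{\infty,\infty,1}}\le C\|f\|_{B^{0}_{\infty,\infty,1}}$ uniformly in $k$ (this is where $1+\frac1{100}>1$ is used), what remains is $\sum_k\|\widetilde{\Delta}_kg\|_{L^{\infty}}\le C\|g\|_{B^{0}_{\infty,1}}$, i.e. $\|R(f,g)\|_{B^{0}_{\infty,1}}\le C\|f\|_{B^{0}_{\infty,\infty,1}}\|g\|_{B^{0}_{\infty,1}}$; taking $g=f$ covers the $R$-part of the second inequality. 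For the first inequality one keeps the weight and uses that $k\ge j-N$ together with $k\ge-1$ forces $(j+2)^{1+\frac1{100}}(k+2)^{-1-\frac1{100}}\le C$, and that $\|\widetilde{\Delta}_kf\|_{L^{\infty}}\le C(k+2)^{-1-\frac1{100}}\|f\|_{B^{0}_{\infty,\infty,1}}$, to get $(j+2)^{1+\frac1{100}}\|\Delta_jR(f,f)\|_{L^{\infty}}\le C\|f\|_{B^{0}_{\infty,\infty,1}}\sum_{k\ge j-N}\|\Delta_kf\|_{L^{\infty}}\le C\|f\|_{B^{0}_{\infty,\infty,1}}\|f\|_{B^{0}_{\infty,1}}$, uniformly in $j$. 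Collecting the $T$- and $R$-bounds yields the three estimates. I expect no real obstacle beyond this bookkeeping; the two things to watch are to route each low-frequency paraproduct factor through $L^{\infty}$ rather than through $B^{0}_{\infty,1}$, and to exploit the strict inequality $1+\frac1{100}>1$ (not merely a weight like $j+2$) to absorb the logarithmic loss coming from $R$.
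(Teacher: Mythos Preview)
The paper does not actually prove this lemma: it is stated with a citation to \cite{DAIbushiding,GUObushiding} and then used directly in Section~\ref{Ill-posedness}. Your argument via the Bony decomposition, with the remainder handled by exploiting the strict inequality $1+\tfrac1{100}>1$ to absorb the logarithmic factor coming from $\#\{j:j\le k+N\}$, is correct and is exactly the mechanism one expects behind the cited references; indeed the paper's own proof of the companion Lemma~\ref{illlm2} uses the same paraproduct bookkeeping and the same almost-orthogonality observations you employ. Your two cautionary remarks at the end (route low-frequency factors through $L^\infty$, and use the weight exponent strictly above $1$ for the $R$-term) pinpoint precisely the places where the argument would otherwise fail at the endpoint $s=0$, $r=1$.
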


\begin{lemm}\label{illlm2}
Denote $R_{j}\triangleq f\Delta_{j}g_{x}-\Delta_{j}(fg_{x})=[f,\Delta_{j}]\partial_{x}g$, then we have
\begin{align*}
&\sum_j2^j\|R_j\|_{L^\infty}\leq C\|f_x\|_{B_{\infty,1}^0}\|g\|_{B_{\infty,1}^1},\\
&\underset{j\geq-1}{\sup}\left((j+2)^{1+\frac{1}{100}}\|R_j\|_{L^\infty} \right)\leq C\left(\|f_x\|_{B_{\infty,1}^0}\|g\|_{B_{\infty,\infty,1}^0}+\|f_x\|_{B_{\infty,\infty,1}^0}\|g\|_{B_{\infty,1}^0}\right).
\end{align*}
\end{lemm}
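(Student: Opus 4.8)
The plan is to peel off the high‑frequency factor and reduce everything to the elementary commutator estimate Lemma \ref{commutator}. Write $g_x=\sum_l\Delta_l g_x$, so that
\[
R_j=[f,\Delta_j]\partial_x g=\sum_l[f,\Delta_j]\Delta_l g_x=\sum_l\big(f\,\Delta_j\Delta_l g_x-\Delta_j(f\,\Delta_l g_x)\big),
\]
and treat three frequency regimes. \emph{(i) $|l-j|\le 1$:} here $[f,\Delta_j]\Delta_l g_x=-[\varphi(2^{-j}D),f]\Delta_l g_x$ (with $\chi(D)$ in place of $\varphi(2^{-j}D)$ if $j=-1$), so Lemma \ref{commutator} followed by Bernstein's inequalities gives, uniformly in $j$, $\|[f,\Delta_j]\Delta_l g_x\|_{L^\infty}\lesssim 2^{-j}\|f_x\|_{L^\infty}\|\Delta_l g_x\|_{L^\infty}\lesssim\|f_x\|_{L^\infty}\|\Delta_l g\|_{L^\infty}$. \emph{(ii) $l\le j-2$:} then $f\,\Delta_j\Delta_l g_x=0$, and by the support properties of Proposition \ref{prop0} only the frequencies of $f$ of size $\sim 2^j$ (in particular bounded away from $0$) contribute to $\Delta_j(f\,\Delta_l g_x)$, so $f$ may be replaced there by a fixed bundle $\overline\Delta_j f$ of $O(1)$ dyadic blocks around $2^j$, and one further use of Bernstein yields $\|\Delta_j(f\,\Delta_l g_x)\|_{L^\infty}\lesssim\|\overline\Delta_j f\|_{L^\infty}\|\Delta_l g_x\|_{L^\infty}\lesssim 2^{l-j}\|\overline\Delta_j f_x\|_{L^\infty}\|\Delta_l g\|_{L^\infty}$. \emph{(iii) $l\ge j+2$:} again the term is $-\Delta_j(f\,\Delta_l g_x)$, and now only the frequencies of $f$ of size $\sim 2^l$ survive, so $f$ may be replaced by $\widetilde\Delta_l f=\Delta_{l-1}f+\Delta_l f+\Delta_{l+1}f$, giving $\|\Delta_j(f\,\Delta_l g_x)\|_{L^\infty}\lesssim\|\widetilde\Delta_l f\|_{L^\infty}\|\Delta_l g_x\|_{L^\infty}\lesssim\|\widetilde\Delta_l f_x\|_{L^\infty}\|\Delta_l g\|_{L^\infty}$. (In (ii), the bottom‑of‑spectrum cases with $j$ small are either empty or handled by replacing $\overline\Delta_j f$ by the projection of $f$ onto $\{|\xi|\gtrsim 2^j\}$, with the same bound.)

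Granting these, the first estimate follows by summation: in $\sum_j 2^j\|R_j\|_{L^\infty}$, the contribution of (i) is $\lesssim\|f_x\|_{L^\infty}\sum_l 2^l\|\Delta_l g\|_{L^\infty}$; that of (ii), after exchanging the order of summation, is $\lesssim\big(\sum_j\|\overline\Delta_j f_x\|_{L^\infty}\big)\sup_j\sum_{l\le j-2}2^l\|\Delta_l g\|_{L^\infty}\lesssim\big(\sum_j\|\overline\Delta_j f_x\|_{L^\infty}\big)\|g\|_{B^1_{\infty,1}}$; that of (iii), again exchanging orders and using $\sum_{j\le l-2}2^j\lesssim 2^l$, is $\lesssim\big(\sup_l\|\widetilde\Delta_l f_x\|_{L^\infty}\big)\sum_l 2^l\|\Delta_l g\|_{L^\infty}\lesssim\big(\sup_l\|\widetilde\Delta_l f_x\|_{L^\infty}\big)\|g\|_{B^1_{\infty,1}}$. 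Since $\|f_x\|_{L^\infty}$, $\sup_l\|\widetilde\Delta_l f_x\|_{L^\infty}$ and $\sum_j\|\overline\Delta_j f_x\|_{L^\infty}$ are all $\lesssim\|f_x\|_{B^0_{\infty,1}}$, this gives $\sum_j 2^j\|R_j\|_{L^\infty}\lesssim\|f_x\|_{B^0_{\infty,1}}\|g\|_{B^1_{\infty,1}}$.

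For the second estimate one multiplies each of the three bounds by $(j+2)^{1+\frac1{100}}$ and takes the supremum in $j$, choosing in each regime which factor carries the weight. In (i), $|l-j|\le 1$ gives $(j+2)^{1+\frac1{100}}\|\Delta_l g\|_{L^\infty}\lesssim(l+2)^{1+\frac1{100}}\|\Delta_l g\|_{L^\infty}\le\|g\|_{B^0_{\infty,\infty,1}}$, so (i) yields $\lesssim\|f_x\|_{B^0_{\infty,1}}\|g\|_{B^0_{\infty,\infty,1}}$. In (ii), putting the weight on $f$, $(j+2)^{1+\frac1{100}}\|\overline\Delta_j f_x\|_{L^\infty}\lesssim\|f_x\|_{B^0_{\infty,\infty,1}}$, while $\sum_{l\le j-2}2^{l-j}\|\Delta_l g\|_{L^\infty}\le\sum_{l\le j}\|\Delta_l g\|_{L^\infty}\le\|g\|_{B^0_{\infty,1}}$, so (ii) yields $\lesssim\|f_x\|_{B^0_{\infty,\infty,1}}\|g\|_{B^0_{\infty,1}}$. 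In (iii), using $j+2\le l+2$ the weight can be moved onto $g$: $(j+2)^{1+\frac1{100}}\|\Delta_l g\|_{L^\infty}\le\|g\|_{B^0_{\infty,\infty,1}}$, while $\sum_{l\ge j+2}\|\widetilde\Delta_l f_x\|_{L^\infty}\lesssim\|f_x\|_{B^0_{\infty,1}}$, so (iii) yields $\lesssim\|f_x\|_{B^0_{\infty,1}}\|g\|_{B^0_{\infty,\infty,1}}$. Adding these three contributions gives the claimed bound.

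The step I expect to be the main obstacle is regimes (ii)–(iii): one must verify, through the frequency‑support bookkeeping of Proposition \ref{prop0}, that in $\Delta_j(f\,\Delta_l g_x)$ with $|l-j|\ge 2$ only the frequencies of $f$ comparable to $2^{\max(j,l)}$ — hence bounded away from $0$ — actually contribute, which is exactly what allows $f$ to be traded for $f_x$ by Bernstein and what makes the sum over $j$ converge. This is also why one should \emph{not} simply Bony‑decompose the product $fg_x$ (Definition \ref{Bony decomposition}) with the convention $T_ab=\sum_k S_{k-1}a\,\Delta_k b$: that would leave paraproduct pieces $\Delta_j(S_{k-1}f\,\Delta_k g_x)$ with $k\sim j$ for which there is no available cancellation, producing a genuine $\|f\|_{L^\infty}$ — which is uncontrolled in this setting, as $f$ is only assumed Lipschitz. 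Keeping the full commutator structure for $|l-j|\le 1$ and exploiting the localization of $f$ itself for $|l-j|\ge 2$ circumvents this difficulty.
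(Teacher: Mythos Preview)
Your argument is correct and gives both inequalities, but it follows a genuinely different route from the paper's proof. The paper (following the template of Lemma~2.100 in \cite{BCD}) first splits $f=S_0f+\widetilde f$, then Bony--decomposes both products $f\Delta_jg_x$ and $fg_x$ and uses the integration-by-parts identity $R(\widetilde f,\partial_xh)=\partial_xR(\widetilde f,h)-R(\partial_x\widetilde f,h)$ to produce eight pieces $R_j^1,\dots,R_j^8$, each estimated separately. Your approach is more elementary: you decompose only $g$ into dyadic blocks, use the full commutator structure together with Lemma~\ref{commutator} on the diagonal $|l-j|\le 1$, and for $|l-j|\ge 2$ exploit the frequency localization in Proposition~\ref{prop0} to see that only frequencies of $f$ of size $\sim 2^{\max(j,l)}$ survive, whence Bernstein trades $f$ for $f_x$ without any paraproduct machinery or low/high splitting of $f$. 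This buys you a cleaner bookkeeping (three regimes versus eight pieces) at the cost of having to check the support arithmetic by hand; note that your bundle $\widetilde\Delta_l=\Delta_{l-1}+\Delta_l+\Delta_{l+1}$ in regime~(iii) should in fact be a slightly wider $O(1)$ packet to cover the full range of contributing frequencies, but this is cosmetic.

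One remark on your closing caveat: your warning that a Bony decomposition of $fg_x$ would ``leave paraproduct pieces $\Delta_j(S_{k-1}f\,\Delta_kg_x)$ with $k\sim j$ for which there is no available cancellation'' is misleading. The paper \emph{does} Bony--decompose and handles exactly this piece by keeping the commutator $[T_{\widetilde f},\Delta_j]\partial_xg$ intact (their $R_j^1$) and applying Lemma~\ref{commutator} to it---so the cancellation is available there too, just packaged differently. Your route avoids this step, but it is not the case that the Bony approach fails.
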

\begin{proof}
The first inequality can be deduced from Lemma 2.100 in \cite{BCD}, so we only prove the second inequality.

In order to prove the second inequality, we split $f$ into low and high frequencies: $f=S_0 f+\widetilde{f}$. As $\widetilde{f}$ is spectrally supported away from the origin, Proposition \ref{Bernstein’s inequalities} ensures that
\begin{align}\label{high}
\forall\ a\in[1,\infty],\ \forall\ j\geq-1,\ \|\Delta_j\partial_{x}\widetilde{f}\|_{L^a}\approx2^j\|\Delta_j\widetilde{f}\|_{L^a}.
\end{align}
Applying the Bony decomposition, we have $R_j=\sum_{i=1}^8 R_j^i$, where
\begin{align*}
& R_j^1=[T_{\widetilde{f}},\Delta_{j}]\partial_{x}g, & & R_j^{2} =T_{\partial_{x}\Delta_{j}g}\widetilde{f}, \\
& R_j^3=-\Delta_jT_{\partial_x g}\widetilde{f}, & & R_j^4 =\partial_x R(\widetilde{f},\Delta_j g), \\
& R_j^5=-R(\partial_{x}\widetilde{f},\Delta_{j}g), & & R_j^6 =-\partial_{x}\Delta_{j}R(\widetilde{f},g), \\
& R_j^7=\Delta_{j} R(\partial_{x}\widetilde{f},g), & & R_j^8 =[S_0f,\Delta_j]\partial_x g.
\end{align*}
Since $R_j^1=\sum_{|j-j'|\leq4}[S_{j^{\prime}-1}\widetilde{f},\Delta_j]\partial_x\Delta_{j'}g$, we deduce from Lemma \ref{commutator} that for all $j\geq-1$, 
\begin{align*}
(j+2)^{1+\frac{1}{100}}\|R_j^1\|_{L^\infty}\leq &C(j+2)^{1+\frac{1}{100}}\sum_{|j-j'|\leq4}2^{-j}\|\partial_{x}S_{j^{\prime}-1}\widetilde{f}\|_{L^\infty}\|\partial_x\Delta_{j'}g\|_{L^\infty}\\
\leq&C(j+2)^{1+\frac{1}{100}}\sum_{|j-j'|\leq4}\|f_x\|_{B_{\infty,1}^0}\|\Delta_{j'}g\|_{L^\infty}\\
\leq&C\|f_x\|_{B_{\infty,1}^0}\|g\|_{B_{\infty,\infty,1}^0}.
\end{align*}
Taking advantage of Proposition \ref{Bernstein’s inequalities}, we see that for all $j\geq-1$, 
\begin{align*}
	(j+2)^{1+\frac{1}{100}}\|R_j^2\|_{L^\infty}\leq &C(j+2)^{1+\frac{1}{100}}
	\sum_{j'\geq j+1}1_{j'\geq1}
	\|S_{j^{\prime}-1}\partial_{x}\Delta_{j}g\|_{L^\infty}\|\Delta_{j'}\widetilde{f}\|_{L^\infty}\\
	\leq&C
	\sum_{j'\geq j+1}1_{j'\geq1}
	(j+2)^{1+\frac{1}{100}}\|\Delta_{j}g\|_{L^\infty}\|\Delta_{j'}f_x\|_{L^\infty}\\
	\leq&C\|f_x\|_{B_{\infty,1}^0}\|g\|_{B_{\infty,\infty,1}^0},\\
	(j+2)^{1+\frac{1}{100}}\|R_j^3\|_{L^\infty}\leq &C(j+2)^{1+\frac{1}{100}}
	\sum_{|j'-j|\leq4}1_{j'\geq1}\sum_{j''\leq j'-2}
	\|\Delta_{j}(\Delta_{j''}\partial_{x}g\Delta_{j'}\widetilde{f})\|_{L^\infty}\\
	\leq&C
	\sum_{|j'-j|\leq4}1_{j'\geq1}(j'+2)^{1+\frac{1}{100}}\sum_{j''\leq j'-2}
	\|\Delta_{j''}g\|_{L^\infty}\|\Delta_{j'}f_x\|_{L^\infty}\\
	\leq&C\|f_x\|_{B_{\infty,\infty,1}^0}\|g\|_{B_{\infty,1}^0}.
\end{align*}
Denote $\widetilde{\Delta}_{j'}\triangleq\Delta_{j^{\prime}-1}+\Delta_{j^{\prime}}+\Delta_{j^{\prime}+1}$. Using Proposition \ref{Bernstein’s inequalities} and \eqref{high}, we infer that for all $j\geq-1$,
\begin{align*}
	(j+2)^{1+\frac{1}{100}}\|R_j^4\|_{L^\infty}\leq &C(j+2)^{1+\frac{1}{100}}
	\sum_{|j'-j|\leq2}\left(\|\partial_{x}\Delta_{j'}\widetilde{f}\|_{L^\infty}\|\Delta_{j}\widetilde{\Delta}_{j'}g\|_{L^\infty}+\|\Delta_{j'}\widetilde{f}\|_{L^\infty}\|\partial_{x}\Delta_{j}\widetilde{\Delta}_{j'}g\|_{L^\infty}\right)\\
	\leq &C(j+2)^{1+\frac{1}{100}}
	\sum_{|j'-j|\leq2}\|\Delta_{j'}\widetilde{f}_x\|_{L^\infty}\|\Delta_{j}g\|_{L^\infty}\\
	\leq&C\|f_x\|_{B_{\infty,1}^0}\|g\|_{B_{\infty,\infty,1}^0}.
\end{align*}
Similarly, we can prove that for $5\leq i\leq8$,
$$
\underset{j\geq-1}{\sup}\left((j+2)^{1+\frac{1}{100}}\|R_j^i\|_{L^\infty} \right)\leq C\|f_x\|_{B_{\infty,1}^0}\|g\|_{B_{\infty,\infty,1}^0},
$$
so we omit the details and complete the proof.
\end{proof}

Taking advantage of the two lemmas above, we now give the ill-posedness in $B^1_{\infty,1}$.
\begin{theo}\label{ill1}
Let $b=\frac 53$. Then for any $10<N\in\mathbb{N}^+$ large enough, there exists a $u_0\in\mathcal{C}^\infty(\mathbb{R})$ such that the following hold:\\
(i) $\|u_{0}\|_{B^{1}_{\infty,1}} \leq C N^{-\frac{1}{10}}$;\\
(ii) There exists a unique solution $u \in \mathcal{C}_{T}\left(\mathcal{C}^{\infty}(\mathbb{R})\right)$ to the Cauchy problem \eqref{eq1} with a time $T \leq 2N^{-\frac{1}{2}}$;\\
(iii) There exists a time $t_{0} \in [0, T]$ such that $\|u(t_{0})\|_{B^{1}_{\infty,1}} \geq \ln N$.
\end{theo}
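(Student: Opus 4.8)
The plan is to exhibit an explicit smooth datum, use the local theory of Section~\ref{LOcal} together with the blow‑up criterion to guarantee a smooth solution on the advertised interval, and then extract the inflation by analysing the solution through its integral (Duhamel) form, the error being tamed by the auxiliary space $B^0_{\infty,\infty,1}$ and Lemmas~\ref{illlm1}--\ref{illlm2}.

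\textbf{Step 1: the initial datum.} I would look for $u_0=u_0^N=a_1\cos(\lambda_1x)\phi(x)+a_2\cos(\lambda_2x)\phi(x)$, a superposition of two modulated bumps, with $\phi\in\mathcal C_0^\infty$, $\phi\equiv1$ near $0$, a fixed low frequency $\lambda_1=\mathcal O(1)$, a high dyadic frequency $\lambda_2=2^{J_2}\sim N^{7/10}\ln N$, and amplitudes $a_i\sim N^{-1/10}/\lambda_i$, so that $\|u_0\|_{B^1_{\infty,1}}\approx\lambda_1a_1+\lambda_2a_2\le CN^{-1/10}$, i.e. (i). The virtue of two well–separated bumps is that the cross term $2a_1a_2\cos(\lambda_1x)\cos(\lambda_2x)\phi^2$ in $u_0^2$ sits inside the single block $\Delta_{J_2}$; evaluating at a point where the two cosines and $\phi$ are all $\approx1$ and invoking a Sidon/lacunarity lower bound gives $2^{J_2}\|\Delta_{J_2}\partial_x(u_0^2)\|_{L^\infty}\gtrsim\lambda_2^2a_1a_2$, whence $\|u_0\partial_xu_0\|_{B^1_{\infty,1}}\gtrsim\lambda_2^2a_1a_2\sim N^{1/2}\ln N$. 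Since at $b=\tfrac53$ one has $F_3\equiv0$, while $F_1,F_2$ carry the order $-3$ multiplier $\partial_xP(D)$ and $F_4$ only order $-1$, Proposition~\ref{prop1}(7) and Lemma~\ref{product} give $\|F(u_0)\|_{B^1_{\infty,1}}\lesssim\|u_0\|^2_{B^1_{\infty,1}}+\|u_{0x}^2\|_{B^0_{\infty,1}}\lesssim N^{-1/5}$, and at the decisive block the $F_4$–contribution $\sim\lambda_1\lambda_2a_1a_2$ is dominated by the Burgers one because $\lambda_2\gg\lambda_1$; as $\|\cdot\|_{B^1_{\infty,1}}$ is a sum of nonnegative block norms, $\|u_0\partial_xu_0+F(u_0)\|_{B^1_{\infty,1}}\ge2^{J_2}\|\Delta_{J_2}(\cdots)\|_{L^\infty}\gtrsim N^{1/2}\ln N$.

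\textbf{Step 2: existence on $[0,2N^{-1/2}]$.} Since $u_0\in\mathcal C^\infty\subset\bigcap_\sigma H^\sigma$, Theorem~\ref{the1} (with $p=r=\infty$ and some $s\in(\tfrac32,2)$) gives a solution which persistence of regularity makes $\mathcal C^\infty$ on its maximal interval $[0,T^\ast_{u_0})$, and the a priori bound in its proof yields $T^\ast_{u_0}\gtrsim\|u_0\|_{B^s_{\infty,\infty}}^{-1}$. Choosing $s$ close to $\tfrac32$, one checks $\|u_0\|_{B^s_{\infty,\infty}}\approx\lambda_2^{\,s-1}(\lambda_2a_2)\lesssim N^{1/2}$, so $T^\ast_{u_0}>2N^{-1/2}$, and Proposition~\ref{blow up criterion} then confirms the smooth solution survives (in every Besov class) on $[0,2N^{-1/2}]$. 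This is (ii) with $T=2N^{-1/2}$.

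\textbf{Step 3: norm inflation.} Put $\mathcal N(v)=vv_x+F(v)$ and write $u(t)=u_0-t\,\mathcal N(u_0)+R(t)$, $R(0)=0$, where $R(t)=-\int_0^t\bigl[(uu_x-u_0u_{0x})+(F(u)-F(u_0))\bigr]d\tau$. Fix $t_0=3\ln N/\|\mathcal N(u_0)\|_{B^1_{\infty,1}}\le2N^{-1/2}$. On $[0,t_0]$ run a continuity (bootstrap) argument: under the a priori assumption $\|u(t)\|_{B^1_{\infty,1}}\le4\ln N$ one propagates, via Lemma~\ref{priori estimate} (legitimate in $B^1_{\infty,1}$, the case $s=1+\tfrac1p$, $r=1$), Lemma~\ref{illlm1} (needed because $B^0_{\infty,1}$ is not an algebra, to control $u_x^2$ and hence $F(u)$, $\partial_xF(u)$) and Lemma~\ref{illlm2} (the transport commutator, including in $B^0_{\infty,\infty,1}$), the bound $\|u_x(t)\|_{B^0_{\infty,1}\cap B^0_{\infty,\infty,1}}\lesssim N^{-1/10}(\ln N)^{1+\frac1{100}}$; here the Gronwall factor is harmless since $\int_0^{t_0}\|u_x\|_{B^0_{\infty,1}}\,d\tau\lesssim t_0\ln N\to0$. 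Then one estimates $R$: $\mathcal N(u)-\mathcal N(u_0)$ is, up to terms carrying the smoothing multipliers, linear in $u-u_0$, which is $o(1)$ in $B^0_{\infty,1}\cap B^0_{\infty,\infty,1}$ (the $\ln N$ resides only in the top–derivative norm $B^1_{\infty,1}$), so the product lemmas give $\|R(t_0)\|_{B^1_{\infty,1}}\lesssim t_0\,(\ln N)^{O(1)}=o(\ln N)$, which closes the bootstrap. Finally $\|u(t_0)\|_{B^1_{\infty,1}}\ge t_0\|\mathcal N(u_0)\|_{B^1_{\infty,1}}-\|u_0\|_{B^1_{\infty,1}}-\|R(t_0)\|_{B^1_{\infty,1}}\ge3\ln N-CN^{-1/10}-o(\ln N)\ge\ln N$, giving (iii).

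\textbf{Main obstacle.} The delicate point is entirely the remainder estimate of Step~3. Because $B^1_{\infty,1}$ is exactly the endpoint where $F$ need not map the space into itself (Lemma~\ref{Fu} requires $p<\infty$ there) and where the transport estimate of Lemma~\ref{priori estimate} is borderline, a naive iteration in $B^1_{\infty,1}$ loses logarithms and does not close; one must bookkeep the solution simultaneously in $B^1_{\infty,1}$ and in the tailored norm $\|\cdot\|_{B^0_{\infty,\infty,1}}$ so that Lemmas~\ref{illlm1}--\ref{illlm2} remove precisely that logarithmic loss, and one must verify that the part of $u$ responsible for the inflation is confined to the top–derivative norm and therefore does not feed back destructively into the error. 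Controlling the quadratic nonlinearity $uu_x-u_0u_{0x}$ in $B^1_{\infty,1}$ without any bound on $\|u\|_{B^2_{\infty,1}}$ on $[0,t_0]$ — replacing Moser estimates by the commutator/algebra estimates in the auxiliary space — is the crux of the whole proof.
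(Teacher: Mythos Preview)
Your Step~3 contains a logical inconsistency that cannot be repaired within the proposed framework. You simultaneously claim (a) the bootstrap $\|u(t)\|_{B^1_{\infty,1}}\le4\ln N$ on $[0,t_0]$, (b) the propagated bound $\|u_x(t)\|_{B^0_{\infty,1}\cap B^0_{\infty,\infty,1}}\lesssim N^{-1/10}(\ln N)^{1+\frac1{100}}$ on the same interval, and (c) the conclusion $\|u(t_0)\|_{B^1_{\infty,1}}\ge\ln N$. But since $\|u\|_{B^1_{\infty,1}}\approx\|u\|_{L^\infty}+\|u_x\|_{B^0_{\infty,1}}$ and $\|u\|_{L^\infty}\lesssim N^{-1/10}$, (b) forces $\|u(t_0)\|_{B^1_{\infty,1}}\lesssim N^{-1/10}(\ln N)^{1+\frac1{100}}\ll\ln N$, contradicting (c). So either (b) fails --- and then your remainder control collapses --- or (c) fails.

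The deeper problem is the choice of the Burgers term $uu_x$ as the inflation driver in an \emph{Eulerian} Duhamel expansion. To bound $\|R(t_0)\|_{B^1_{\infty,1}}$ you must control $\|uu_x-u_0u_{0x}\|_{B^1_{\infty,1}}=\tfrac12\|(u-u_0)(u+u_0)\|_{B^2_{\infty,1}}$, and any product/Moser estimate here needs a $B^2_{\infty,1}$ norm. For your datum one has $\|u_0\|_{B^2_{\infty,1}}\sim\lambda_2^2a_2\sim N^{3/5}\ln N$, and the a~priori transport bound propagates this: $\|u(t)\|_{B^2_{\infty,1}}\lesssim N^{3/5}\ln N$. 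Feeding that back, the piece $\|u_0(u_x-u_{0x})\|_{B^1_{\infty,1}}\lesssim\|u_0\|_{L^\infty}\|u-u_0\|_{B^2_{\infty,1}}+\dots$ is of size $N^{-1/10}\cdot N^{3/5}\ln N=N^{1/2}\ln N$, exactly the same order as your leading term $\|\mathcal N(u_0)\|_{B^1_{\infty,1}}$, so after integrating over $[0,t_0]\sim[0,N^{-1/2}]$ the remainder is $\sim\ln N$, not $o(\ln N)$.

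The paper circumvents this by working in \emph{Lagrangian} coordinates, so that the entire Burgers transport is absorbed into a commutator $R_j=[u,\Delta_j]\partial_xu$, which is small (of order $\|u_x\|_{L^\infty}\|u\|_{B^1_{\infty,1}}\le CN^{-1/10}\ln N$) rather than being the leading term. The inflation is then driven by $F_4(u_0)$, not by $uu_x$: one isolates $-t\,\Delta_jF_4(u_0)$ and needs $\|u_{0x}^2\|_{B^0_{\infty,1}}$ to be \emph{large}. Your two-bump datum gives $\|u_{0x}^2\|_{B^0_{\infty,1}}\sim(\lambda_2a_2)^2\sim N^{-1/5}$, far too small; the paper instead takes $u_0$ to be a high-frequency cosine modulated by a broadband factor $S_Nh$ (with $h$ the Heaviside function), so that $u_{0x}^2$ spreads over $\sim N$ dyadic blocks and $\|u_{0x}^2\|_{B^0_{\infty,1}}\ge C'N^{3/5}$. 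Controlling the resulting error term (the paper's $I_4$) then requires tracking $E=\partial_x^3P(D)(u_x^2)$ through its own transport equation, and this is precisely where the $B^0_{\infty,\infty,1}$ norm of $u_x$ (not of $u$) is propagated --- at the large level $N^{9/10+\frac1{100}}$, not the small level you claim --- via the second estimate of Lemma~\ref{illlm2}.
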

\begin{proof}
Let 
\begin{align}
u_0(x)=-(1-\partial_{xx})^{-1}\partial_x\left[\cos2^{N+5}x\cdot\left(1+N^{-\frac{1}{10}}S_Nh(x)\right)\right]N^{-\frac{1}{10}},
\end{align}
where $S_{N}f=\underset{-1\leq j<N}{\sum}\Delta_{j}f$ and $h(x)=1_{x\geq0}(x)$. Similar to the proof of Lemma 4.1 in \cite{GUObushiding}, we have
\begin{align}
&\|u_{0}\|_{B^{1}_{\infty,1}} \leq C N^{-\frac{1}{10}},\label{uoB1.1}\\
&\|u_{0x}\|_{B^{0}_{\infty,\infty,1}} \leq C N^{\frac{9}{10}+\frac{1}{100}},\label{uoxBo..1}\\
&\|u_{0x}^2\|_{B^{0}_{\infty,1}} \geq C' N^{\frac{3}{5}}.\label{uox2Bo.1}
\end{align}
Owing to the proof of Proposition \ref{blow up criterion}, we have
\begin{align*}
\|u(t)\|_{W^{1,\infty}}\leq \|u_0\|_{W^{1,\infty}}+\int_0^t C_{\alpha,\beta}\|u(\tau)\|^2_{W^{1,\infty}}d\tau.
\end{align*}
Denote $T_0\triangleq\frac{1}{4C_{\alpha,\beta}\|u_0\|_{W^{1,\infty}}}$. For all $t\in[0,T_0]$, we get
\begin{align}\label{u(t)W1}
\|u(t)\|_{W^{1,\infty}}\leq 2\|u_0\|_{W^{1,\infty}}\leq C\|u_0\|_{B^{1}_{\infty,1}}\leq C N^{-\frac{1}{10}}.
\end{align}
Let $y$ be the solution of \eqref{y}. There is a time $T_1>0$ small enough such that \eqref{bddyxi} holds for any $t\in[0,\min\{T_0,T_1\}]$. Let $\bar{T}\triangleq2N^{-\frac 12}\leq\min\{T_0,T_1\}$ for $N>10$ sufficiently large. In order to prove the norm inflation, it suffices to prove that there is a time $t_0\in[0,\bar{T}]$ such that $\|u(t_0)\|_{B^1_{\infty,1}}\geq \ln N$. We assume the opposite holds, that is,
\begin{align}\label{opposite}
\sup_{t\in[0,\bar{T}]}\left\|u(t)\right\|_{B_{\infty,1}^{1}}\leq\ln N.
\end{align}
We see from \eqref{eq1} that
\begin{align*}
\partial_{t}\left(\Delta_{j}u\right)+\Delta_{j}\left(uu_x\right)+\Delta_{j}F(u)=0.
\end{align*}
Applying the Lagrange coordinates to the above equation and integrating with respect to $t$, we obtain
\begin{align}\label{Deltaju}
\left(\Delta_{j}u\right)\circ y=\Delta_{j}u_0+\int_0^t\sum_{i=1}^4 I_i(\tau)d\tau-\Delta_j F_4(u_0)\cdot t,
\end{align}
where
\begin{align*}
&I_1=(u\Delta_ju_x-\Delta_j(uu_x))\circ y\triangleq R_j\circ y,\\
&I_2=-(\Delta_j F_1(u))\circ y,\\
&I_3=-(\Delta_j F_2(u))\circ y,\\
&I_4=\Delta_j F_4(u_0)-(\Delta_j F_4(u))\circ y.
\end{align*}
Let $T=N^{-\frac 12}$. Since we want an estimate of $\|u\|_{B^1_{\infty,1}}$, we need to calculate $\underset{j\geq-1}{\sum}2^j\|I_i\|_{L^\infty}$. Taking advantage of \eqref{u(t)W1} and \eqref{opposite}, we have the following estimates:\\
(1) Similar to the proof of Lemma 2.100 in \cite{BCD}, we have
$$\underset{j\geq-1}{\sum}2^j\|I_1\|_{L^\infty}\leq\underset{j\geq-1}{\sum}2^j\|R_j\|_{L^\infty}\leq C\|u_x\|_{L^\infty}\|u\|_{B_{\infty,1}^1}\leq CN^{-\frac{1}{10}}\ln N.$$
(2) Thanks to Proposition \ref{prop1} and Lemma \ref{product}, we get
\begin{align*}
&\sum_{j\geq-1}2^j\|I_2\|_{L^\infty}\leq\sum_{j\geq-1}2^j\|\Delta_j F_1(u)\|_{L^\infty}\leq C\|u^2\|_{B_{\infty,1}^{-2}}\leq C\|u^2\|_{L^\infty}\leq C\|u\|_{L^\infty}^2 \leq CN^{-\frac{1}{5}},\\
&\sum_{j\geq-1}2^j\|I_3\|_{L^\infty}\leq\sum_{j\geq-1}2^j\|\Delta_j F_2(u)\|_{L^\infty}\leq C\|u_x^2\|_{B_{\infty,1}^{-2}}\leq C\|u_x^2\|_{L^\infty}\leq C\|u_x\|_{L^\infty}^2\leq CN^{-\frac{1}{5}}.
\end{align*}
(3) Now we consider $\underset{j\geq-1}{\sum}2^j\|I_4\|_{L^\infty}$, which is the most challenging to estimate and demonstrates the significance of Lemma \ref{illlm1} and Lemma \ref{illlm2}. Denote $E=\partial_{x}^3P(D)(u_x^2)$, then \eqref{eq1} gives
\begin{equation}\label{eqE}
\left\{
\begin{array}{l}
	\partial_t E+u\partial_xE=G(t,x),\quad t\in[0,T], \\
	E|_{t=0}=E_0=\partial_{x}^3P(D)(u_{0x}^2),
\end{array}\right.
\end{equation}
where
\begin{align*}
&G(t,x)=-2\partial_{x}^3P(D)(u_x\partial_{x} F(u))-\partial_{x}^3P(D)(u_x^3)+V_1+P(D)V_2,\\
&V_1=\frac{1}{\alpha^4}u\cdot P(D)(u_x^2)-\frac{1}{\alpha^2}\left(\frac{1}{\alpha^2}+\frac{1}{\beta^2}\right)u\cdot\left(1-\beta^2\partial_{xx}\right)^{-1}(u_x^2),\\
&V_2=\frac{1}{\alpha^2\beta^2}uu_x^2-\left(\frac{1}{\alpha^2}+\frac{1}{\beta^2}\right)\partial_{xx}(uu_x^2).
\end{align*}
Taking advantage of Lemma \ref{illlm1}, \eqref{u(t)W1} and \eqref{opposite}, we have
\begin{align*}
&\|\partial_{x}^3P(D)(u_x\partial_{x} F(u))\|_{B^1_{\infty,1}}\\
\leq& C\|u_x\partial_{x} F(u)\|_{B^0_{\infty,1}}\\
\leq& C\left(\|u_x\partial_{xx}P(D)u^2\|_{B^0_{\infty,1}}+\|u_x\partial_{xx}P(D)u_x^2\|_{B^0_{\infty,1}}+\|u_x\partial_{x}^4 P(D)u_x^2\|_{B^0_{\infty,1}}\right)\\
\leq& C\|u_x\|_{B_{\infty,1}^{0}\cap B_{\infty,\infty,1}^{0}}\left(\|\partial_{xx}P(D)u^2\|_{B^0_{\infty,1}}+\|\partial_{xx}P(D)u_x^2\|_{B^0_{\infty,1}}\right)\\
&+C\|u_x\|_{B_{\infty,1}^{0}}\|\partial_{x}^4 P(D)u_x^2\|_{B^0_{\infty,1}\cap B_{\infty,\infty,1}^{0}}\\
\leq& C\|u_x\|_{B_{\infty,1}^{0}\cap B_{\infty,\infty,1}^{0}}\|u\|_{W^{1,\infty}}^2+C\|u_x\|_{B_{\infty,1}^{0}}\|u_x\|_{B^0_{\infty,1}}\|u_x\|_{B^0_{\infty,\infty,1}}\\
\leq &C\left(N^{-\frac 15}\ln N+N^{-\frac 15}\|u_x\|_{B^0_{\infty,\infty,1}}+(\ln N)^2\|u_x\|_{B^0_{\infty,\infty,1}}\right)
\end{align*}
and
\begin{align*}
\|\partial_{x}^3P(D)(u_x^3)\|_{B^1_{\infty,1}}\leq &C\|u_x^3\|_{B^0_{\infty,1}}\leq C\|u_x^2\|_{B_{\infty,1}^{0}\cap B_{\infty,\infty,1}^{0}}\|u_x\|_{B_{\infty,1}^{0}}\\
\leq& C\|u_x\|_{B_{\infty,\infty,1}^{0}}\|u_x\|_{B_{\infty,1}^{0}}^2\\
\leq& C(\ln N)^2\|u_x\|_{B^0_{\infty,\infty,1}}.
\end{align*}
Moreover, it's easy to check that
\begin{align*}
\|V_1+P(D)V_2\|_{B^1_{\infty,1}}\leq C\|u\|_{W^{1,\infty}}^2\|u\|_{B^1_{\infty,1}}\leq CN^{-\frac 15}\ln N.
\end{align*}
Since $N>10$ , we obtain
\begin{align*}
\|G(t)\|_{B^1_{\infty,1}}
\leq C\left(N^{-\frac 15}\ln N+(\ln N)^2\|u_x(t)\|_{B^0_{\infty,\infty,1}}\right).
\end{align*}
Applying $\Delta_{j}$ and the Lagrange coordinates to \eqref{eqE} yields
\begin{align*}
\left(\Delta_{j}E\right)\circ y-\Delta_{j}E_0=\int_0^t\widetilde{R_j}\circ y+\left(\Delta_{j}G\right)\circ y\ d\tau,
\end{align*}
where $\widetilde{R_j}=u\partial_{x}\Delta_{j}E-\Delta_{j}(u\partial_{x}E)$. According to Lemma \ref{illlm1}, Lemma \ref{illlm2} and \eqref{opposite}, we have
\begin{align*}
\sum_{j\geq-1}2^j\|\widetilde{R_j}\circ y\|_{L^\infty}\leq&\sum_{j\geq-1}2^j\|\widetilde{R_j}\|_{L^\infty}\leq C\|u_x\|_{B_{\infty,1}^{0}}\|E\|_{B_{\infty,1}^{1}}\leq C\|u\|_{B_{\infty,1}^{1}}\|u_x^2\|_{B_{\infty,1}^{0}}\\
\leq& C\|u\|_{B_{\infty,1}^{1}}\|u_x\|_{B_{\infty,1}^{0}}\|u_x\|_{B_{\infty,\infty,1}^{0}}\leq C(\ln N)^2\|u_x\|_{B_{\infty,\infty,1}^{0}}.
\end{align*}
Thus, for any $t\in[0,T]$, we get
\begin{align}
\sum_{j\geq-1}2^j\|I_4\|_{L^\infty}
\leq&C\int_0^t\sum_{j\geq-1}2^j\|\widetilde{R_j}\circ y\|_{L^\infty}+\sum_{j\geq-1}2^j\|\left(\Delta_{j}G\right)\circ y\|_{L^\infty}\ d\tau\notag\\
\leq& CT(\ln N)^2\|u_x\|_{L^\infty_T\left(B^0_{\infty,\infty,1}\right)}+CTN^{-\frac 15}\ln N.\label{I4}
\end{align}
In order to estimate $\|u_x\|_{L^\infty_T\left(B^0_{\infty,\infty,1}\right)}$, we differentiate \eqref{eq1} once with respect to $x$ and apply $\Delta_{j}$ to it, thus obtaining
\begin{align*}
\left(\partial_{t}+u\partial_{x}\right)\Delta_{j}u_x=\Delta_{j}\widetilde{g}+u\partial_{x}\Delta_{j}u_x-\Delta_{j}\left(uu_{xx}\right),
\end{align*}
where $\widetilde{g}\triangleq-u_x^2-\partial_{x}F(u)$. Therefore, according to Lemma \ref{illlm2} and the similar proof of Lemma \ref{priori estimate}, we have
\begin{align*}
\|u_x\|_{L^\infty_T\left(B^0_{\infty,\infty,1}\right)}=&\sup_{t\in[0,T]}\sup_{j\geq-1}(j+2)^{1+\frac{1}{100}}\|\Delta_{j}u_x(t)\|_{L^\infty}\\
\leq&\sup_{j\geq-1}(j+2)^{1+\frac{1}{100}}\|\Delta_{j}u_{0x}\|_{L^\infty}+\int_0^T\sup_{j\geq-1}(j+2)^{1+\frac{1}{100}}\|\Delta_{j}\widetilde{g}(\tau)\|_{L^\infty}d\tau\\
&+\int_0^T\sup_{j\geq-1}(j+2)^{1+\frac{1}{100}}\|\left(u\partial_{x}\Delta_{j}u_x-\Delta_{j}\left(uu_{xx}\right)\right)(\tau)\|_{L^\infty}d\tau\\
\leq&\|u_{0x}\|_{B^0_{\infty,\infty,1}}+\int_0^T\|\widetilde{g}(\tau)\|_{B^0_{\infty,\infty,1}}d\tau+C\int_0^T\|u_x(\tau)\|_{{B^0_{\infty,1}}}\|u_x(\tau)\|_{B^0_{\infty,\infty,1}}d\tau.
\end{align*}
Since
\begin{align*}
\|\widetilde{g}\|_{B^0_{\infty,\infty,1}}\leq& C\left(\|u_x^2\|_{B^0_{\infty,\infty,1}}+\|\partial_{x}F_4(u)\|_{B^0_{\infty,\infty,1}}+\|\partial_{x}F_2(u)\|_{B^0_{\infty,\infty,1}}+\|\partial_{x}F_1(u)\|_{B^0_{\infty,\infty,1}}\right)\\
\leq& C\left(\|u_x^2\|_{B^0_{\infty,\infty,1}}+\|u^2\|_{B^0_{\infty,\infty,1}}\right)\\
\leq& C\left(\|u_x\|_{B^0_{\infty,1}}\|u_x\|_{B^0_{\infty,\infty,1}}+\|u^2\|_{B^1_{\infty,\infty}}\right)\\
\leq& C\left(\|u_x\|_{B^0_{\infty,1}}\|u_x\|_{B^0_{\infty,\infty,1}}+\|u^2\|_{C^{0,1}}\right)\\
\leq& C\left(\|u_x\|_{B^0_{\infty,1}}\|u_x\|_{B^0_{\infty,\infty,1}}+N^{-\frac 15}\right),
\end{align*}
applying \eqref{uoxBo..1} and \eqref{opposite} then yields
\begin{align*}
\|u_x\|_{L^\infty_T\left(B^0_{\infty,\infty,1}\right)}\leq&C N^{\frac{9}{10}+\frac{1}{100}}+CTN^{-\frac 15}+CT\left(\ln N\right)\|u_x\|_{L^\infty_T\left(B^0_{\infty,\infty,1}\right)}\\
\leq&C N^{\frac{9}{10}+\frac{1}{100}}+CN^{-\frac 12}\left(\ln N\right)\|u_x\|_{L^\infty_T\left(B^0_{\infty,\infty,1}\right)},
\end{align*}
which implies that $\|u_x\|_{L^\infty_T\left(B^0_{\infty,\infty,1}\right)}\leq C N^{\frac{9}{10}+\frac{1}{100}}$ when $N$ is large enough. Thus, we deduce from \eqref{I4} that
\begin{align*}
\sum_{j\geq-1}2^j\|I_4\|_{L^\infty}\leq &CT(\ln N)^2N^{\frac{9}{10}+\frac{1}{100}}+CTN^{-\frac 15}\ln N\\
\leq &C(\ln N)^2N^{\frac{2}{5}+\frac{1}{100}}+CN^{-\frac{7}{10}}\ln N.
\end{align*}
By $(1)-(3)$, \eqref{uoB1.1}, \eqref{uox2Bo.1} and \eqref{Deltaju}, we finally have
\begin{align*}
\|u(t)\|_{B^1_{\infty,1}}
\geq&\sum_{j\geq-1}2^j\left(t\|\Delta_{j}F_4(u_0)\|_{L^\infty}-\|\Delta_{j}u_0\|_{L^\infty}-\int_0^t\sum_{i=1}^4\|I_i\|_{L^\infty}d\tau\right)\\
\geq&t\|F_4(u_0)\|_{B^1_{\infty,1}}-\|u_0\|_{B^1_{\infty,1}}-Ct\left(N^{-\frac{1}{10}}\ln N+N^{-\frac 15}+(\ln N)^2N^{\frac{2}{5}+\frac{1}{100}}+N^{-\frac{7}{10}}\ln N\right)\\
\geq&Ct\|u_{0x}^2\|_{B^0_{\infty,1}}-C\left(N^{-\frac{1}{10}}+N^{-\frac{1}{10}-\frac 12}\ln N+N^{-\frac 15-\frac 12}+(\ln N)^2N^{-\frac {1}{10}+\frac{1}{100}}+N^{-\frac{7}{10}-\frac 12}\ln N\right)\\
\geq&\widetilde{C}tN^{\frac 35}-C.
\end{align*}
Therefore, for any $t\in\left[\frac 12 N^{-\frac{1}{2}},N^{-\frac 12}\right]$, we get
\begin{align*}
\|u(t)\|_{B^1_{\infty,1}}\geq\frac{\widetilde{C}}{2} N^{\frac{1}{10}}-C.
\end{align*}
Hence, we obtain
\begin{align*}
\sup_{t\in[0,\bar{T}]}\|u(t)\|_{B^1_{\infty,1}}\geq\frac{\widetilde{C}}{2} N^{\frac{1}{10}}-C>\ln N
\end{align*}
for $N$ large enough, which contradicts to the assumption \eqref{opposite}. So we gain the norm inflation and complete the proof of ill-posedness.
\end{proof}

Next, when $b=\frac 53$, we provide the ill-posedness in $B^{\frac 32}_{2,q}$ with $q\in(1,+\infty]$ in the sense of norm inflation.
\begin{theo}\label{ill2}
	Let $b=\frac 53$, $1<q\leq+\infty$. Then for any $N\in\mathbb{N}^+$ large enough, there exists a $u_0\in H^\infty(\mathbb{R})$ such that the following hold:\\
	(i) $\|u_{0}\|_{B^{3/2}_{2,q}} \leq \frac{C}{\ln N}\to 0$, as $N\to\infty$;\\
	(ii) There exists a unique solution $u \in \mathcal{C}\left([0,T);H^{\infty}(\mathbb{R})\right)$ to the Cauchy problem \eqref{eq1} with a time $T \leq \frac{1}{\ln N}$;\\
	(iii) $\|u\|_{L^\infty\left(\left[0,T\right);B^{3/2}_{2,q}\right)} \geq \ln N$.
\end{theo}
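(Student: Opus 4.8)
\emph{Strategy.} The plan is to run the norm--inflation scheme of the proof of Theorem~\ref{ill1} at the critical index $B^{3/2}_{2,q}$, exploiting crucially that for $q>1$ there is neither an embedding $B^{3/2}_{2,q}\hookrightarrow W^{1,\infty}$ nor a loss--free product estimate at this index, so there is room for initial data that are arbitrarily small in $B^{3/2}_{2,q}$ while their quadratic response is not. When $b=\tfrac53$ the term $F_3$ in \eqref{eqF} vanishes and the only dangerous term is $F_4(u)=\tfrac{b-5}{2}\alpha^2\beta^2\partial_x^3P(D)(u_x^2)$; its Fourier multiplier is of order $-1$, elliptic away from the origin and annihilates frequency $0$, so $\|F_4(v)\|_{B^{3/2}_{2,q}}$ is comparable to $\|v_x^2\|_{B^{1/2}_{2,q}}$ up to a harmless low--frequency piece. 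The whole task is therefore to build $u_0$ with $\|u_0\|_{B^{3/2}_{2,q}}$ tiny while $\|u_{0x}^2\|_{B^{1/2}_{2,q}}$ is large, and then to propagate this disparity a short time under \eqref{eq1}.

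\emph{The data.} I would take $u_0=u_0^N\in H^\infty(\mathbb R)$ whose derivative has a smoothed, symmetrised Fourier profile $\widehat{u_{0x}^N}(\xi)\sim\lambda_N|\xi|^{-1}$ on a dyadic window $2\le|\xi|\le 2^{M_N}$ and $0$ elsewhere, with $M_N\to\infty$ and $\lambda_N$ chosen so that $\|u_0^N\|_{B^{3/2}_{2,q}}\sim\|u_{0x}^N\|_{B^{1/2}_{2,q}}\sim(\ln N)^{-1}$; this is feasible because each of the $M_N$ active shells contributes an $O(1)$ amount to the norm and the $\ell^q$--sum of $M_N$ such terms is $\sim M_N^{1/q}$, strictly smaller than the $\ell^1$--sum precisely when $q>1$ --- this is exactly where the hypothesis $q>1$ enters. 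For such a profile $u_{0x}^N$ behaves like a truncated logarithm near $0$, so $(u_{0x}^N)^2$ has Fourier transform $\gtrsim\lambda_N^2|\xi|^{-1}\log|\xi|$ on the same window, whence $\|(u_{0x}^N)^2\|_{B^{1/2}_{2,q}}\gtrsim\lambda_N^2 M_N^{1+1/q}$ and $\|F_4(u_0^N)\|_{B^{3/2}_{2,q}}\gtrsim\lambda_N^2 M_N^{1+1/q}\gg\ln N$. The essential point is the loss--free lower bound for $\|(u_{0x}^N)^2\|_{B^{1/2}_{2,q}}$, a quantitative witness of the failure of the product law in $B^{1/2}_{2,q}$.

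\emph{Existence, case split, and propagation.} Since $u_0^N\in H^\infty$, Theorem~\ref{the1} gives a unique maximal solution $u\in\mathcal C([0,T^*_{u_0});H^s)$ for all $s$, with $H^\infty$--persistence from \eqref{4.3} as long as $\int_0^t\|u_x\|_{L^\infty}<\infty$. Here Proposition~\ref{blow up criterion} is used but no blow--up theorem is: if $T^*_{u_0}\le(\ln N)^{-1}$, then \eqref{blow2} together with $B^{3/2}_{2,q}\hookrightarrow B^1_{\infty,\infty}$ gives $\int_0^{T^*_{u_0}}\|u(\tau)\|_{B^{3/2}_{2,q}}\,d\tau=\infty$ on the finite interval $[0,T^*_{u_0})$, so $\sup_{[0,T^*_{u_0})}\|u\|_{B^{3/2}_{2,q}}=\infty\ge\ln N$ and we take $T=T^*_{u_0}$; otherwise $T^*_{u_0}>(\ln N)^{-1}$, we fix $T=(\ln N)^{-1}$ and argue by contradiction, assuming $\sup_{[0,T)}\|u(t)\|_{B^{3/2}_{2,q}}\le\ln N$. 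With $y$ the flow of \eqref{y}, applying $\Delta_j$ to \eqref{eq1}, transporting along $y$ and integrating in $t$ gives, exactly as in \eqref{Deltaju},
\[
(\Delta_j u)\circ y(t)=\Delta_j u_0-t\,\Delta_jF_4(u_0)+\int_0^t\big(I_1+I_2+I_3+I_4\big)\,d\tau ,
\]
with $I_1=([u,\Delta_j]\partial_x u)\circ y$, $I_2=-(\Delta_jF_1(u))\circ y$, $I_3=-(\Delta_jF_2(u))\circ y$ and $I_4=\Delta_jF_4(u_0)-(\Delta_jF_4(u))\circ y$. Under the contradiction hypothesis, Lemma~\ref{log} controls $\|u_x\|_{L^\infty}$, hence $\int_0^t\|u_x\|_{L^\infty}d\tau$ on $[0,T)$, keeping $y_\xi$ between controlled positive quantities so that $\|(\Delta_j\cdot)\circ y\|_{L^2}\approx\|\Delta_j\cdot\|_{L^2}$; the commutator $I_1$ is $O(\|u_x\|_{L^\infty}\|u\|_{B^{3/2}_{2,q}})$ by Lemma~\ref{commutator}, $F_1,F_2$ are lower order, and the $F_4$--correction in $I_4$ is handled by propagating an auxiliary bound for $E=\partial_x^3P(D)(u_x^2)$ through its own transport equation exactly as in the proof of Theorem~\ref{ill1}. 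Collecting these yields $\|u(t)\|_{B^{3/2}_{2,q}}\gtrsim t\,\|F_4(u_0)\|_{B^{3/2}_{2,q}}-\|u_0\|_{B^{3/2}_{2,q}}-(\mathrm{error})$, and evaluating at a suitable $t_0\in[0,T)$ with the lower bound from the construction gives $\|u(t_0)\|_{B^{3/2}_{2,q}}>\ln N$, the desired contradiction; properties (i) and (ii) are built into the construction and the case split.

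\emph{Main obstacle.} I expect the real work to be twofold: the quantitative construction --- the loss--free lower bound on $\|(u_{0x}^N)^2\|_{B^{1/2}_{2,q}}$ (equivalently on $\|F_4(u_0^N)\|_{B^{3/2}_{2,q}}$) together with smallness of $\|u_0^N\|_{B^{3/2}_{2,q}}$, which is exactly where $q>1$ is indispensable --- and closing the final bootstrap through the Lagrangian change of variables with only logarithmic control on $u_x$, which forces $M_N$ to be chosen so as to beat simultaneously the error terms, the composition loss coming from $y_\xi$, and the high Sobolev norms of $u_0^N$ entering the interpolation of Lemma~\ref{log}.
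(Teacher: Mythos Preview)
The paper's proof takes a completely different and much shorter route. It chooses \emph{odd} initial data
\[
u_0(x)=-\sum_{n=2}^{N}\frac{(\ln N)^{-1}h_n(x)}{2^{2n}n^{2/(1+q)}},\qquad \widehat{h_n}(\xi)=i2^{-n}\xi\,\widetilde\varphi(2^{-n}\xi),
\]
so that $u(t,\cdot)$ stays odd and $u(t,0)=u_{xx}(t,0)=0$; the construction is engineered so that the single number $v(t)=u_x(t,0)$ starts large, $v(0)\gtrsim(\ln N)^{-1}N^{(q-1)/(q+1)}$. Differentiating \eqref{eq1} once at $x=0$, the local part of $-\partial_xF_4$ contributes $+\tfrac53 v^2$, which yields the Riccati inequality $\dot v\ge\tfrac23 v^2-C_{\alpha,\beta}\|u\|_{W^{1,\infty}}^2$. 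Under the contradiction hypothesis this forces $v$ to blow up before time $(\ln N)^{-1}$, and the blow--up criterion \eqref{blow2} then gives $\|u\|_{L^\infty([0,\widetilde T);B^{3/2}_{2,q})}=+\infty$. No dyadic decomposition, no Lagrangian change of variables, no auxiliary space of the $B^0_{\infty,\infty,1}$ type --- the mechanism is a single--point ODE driven by the favourable sign of the local $F_4$ contribution.

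Your transplant of the Theorem~\ref{ill1} scheme runs into a concrete obstacle with the data you describe. In Theorem~\ref{ill1} the bootstrap closes because $B^1_{\infty,1}\hookrightarrow W^{1,\infty}$: the smallness $\|u_0\|_{B^1_{\infty,1}}\le CN^{-1/10}$ propagates to $\|u_x(t)\|_{L^\infty}\le CN^{-1/10}$ (estimate \eqref{u(t)W1}), and \emph{that} is what makes the commutator $I_1$ genuinely small against the main term. In $B^{3/2}_{2,q}$ with $q>1$ there is no such embedding, and your truncated--log profile necessarily has $\|u_{0x}\|_{L^\infty}\sim\lambda_N M_N$ large --- indeed this largeness is precisely what produces the lower bound $\|u_{0x}^2\|_{B^{1/2}_{2,q}}\gtrsim\lambda_N^2M_N^{1+1/q}$ you want. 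But then the commutator error and the main forcing have the \emph{same} order: $\|u_{0x}\|_{L^\infty}\|u_0\|_{B^{3/2}_{2,q}}\sim\lambda_N M_N\cdot\lambda_N M_N^{1/q}=\lambda_N^2M_N^{1+1/q}\sim\|F_4(u_0)\|_{B^{3/2}_{2,q}}$, so no tuning of $M_N,\lambda_N$ separates them. Replacing $\|u_{0x}\|_{L^\infty}$ by the bound you get from Lemma~\ref{log} under the contradiction hypothesis only makes things worse: since $\|u_0\|_{H^{3/2+\varepsilon}}$ is exponential in $M_N$, one obtains $\|u_x\|_{L^\infty}\lesssim M_N\ln N$, and the ratio (main)/(error) becomes $\sim(\ln N)^{-4}M_N^{-1/q}\ll1$. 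The paper sidesteps all of this because the ODE at $x=0$ only sees the pointwise value $u_x(t,0)$; there the large $\|u_x\|_{L^\infty}$ is the \emph{engine} of the argument rather than an error to be beaten.
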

\begin{proof}
Let $\widetilde{\varphi}\in\mathcal{D}(\mathbb{R})$ be an even, non-zero, non-negative function such that $\widetilde{\varphi}\varphi=\widetilde{\varphi}$. For $N\in\mathbb{N}^+$ large enough, we define
\begin{align*}
u_0(x)=-\sum_{n=2}^{N}\dfrac{(\ln N)^{-1} h_n(x)}{2^{2n}n^{\frac{2}{1+q}}},
\end{align*}
where $\mathcal{F}(h_n)(\xi)=i2^{-n}\xi\widetilde{\varphi}(2^{-n}\xi)$. We see from the proof of Theorem 1.2 in \cite{GUOSharpill} that $u_0\in H^\infty(\mathbb{R})\cap B^{3/2}_{2,q}(\mathbb{R})$ is odd and satisfies
\begin{align*}
&\|u_0\|_{B^{3/2}_{2,q}}\leq C(\ln N)^{-1},\\
&\|u_0\|_{B^{3/2}_{2,1}}\leq CN^{\frac{q-1}{q+1}},\\
&u_{0x}(0)\geq C(\ln N)^{-1}N^{\frac{q-1}{q+1}}>0.
\end{align*}
Thus, the proof of Theorem \ref{the1} gives that there is a lifespan $T_N\sim N^{-\frac{q-1}{q+1}}$ and a solution $u\in\mathcal{C}\left([0,T_N);H^{\infty}(\mathbb{R})\right)$ such that $\|u\|_{L^\infty\left(\left[0,T_N\right);B^{3/2}_{2,q}\right)}<\infty$.

Next, we denote $\widetilde{T}\triangleq(\ln N)^{-1}$ and consider $\|u\|_{L^\infty\left(\left[0,\widetilde{T}\right);B^{3/2}_{2,q}\right)}$ for fixed $N$ large enough. If $\|u\|_{L^\infty\left(\left[0,\widetilde{T}\right);B^{3/2}_{2,q}\right)}\geq\ln N$, then Theorem \ref{ill2} is proved. Otherwise, if we assume that
\begin{align}\label{opposite2}
\|u\|_{L^\infty\left(\left[0,\widetilde{T}\right);B^{3/2}_{2,q}\right)}<\ln N,
\end{align}
then similar to the proof of Proposition \ref{blow up criterion}, we have
\begin{align*}
\log\left(e+\|u(t)\|_{H^{s}}\right)\leq\left(\log\left(e+\|u_{0}\|_{H^{s}}\right)+Ct\right)e^{C\int_{0}^{t}\|u\|_{B_{2,\infty}^{3/2}}d\tau},
\end{align*}
which implies $u\in L^\infty([0,\widetilde{T});H^{s}(\mathbb{R}))\ (s>\frac 32)$. Since $u_0$ is odd, we can easily deduce from \eqref{eq1} that $u$ is odd and $u(t,0)=u_{xx}(t,0)=0$. By differentiating \eqref{eq1} once with respect to $x$ and setting $x=0$, we get
\begin{align*}
\frac{d}{dt}u_x(t,0)=-u_x(t,0)^2-\partial_{x}F(u)(t,0)
\geq\frac 23 u_x(t,0)^2-C_{\alpha,\beta}\left(\|u(t)\|^2_{L^{\infty}}+\|u_x(t)\|^2_{L^{\infty}}\right).
\end{align*}
Taking advantage of Lemma \ref{log} and \eqref{opposite2}, we see that
\begin{align*}
\|u\|_{W^{1,\infty}}\leq C\ln N,
\end{align*}
thus obtaining
\begin{align}\label{ill2ineq}
\frac{d}{dt}u_x(t,0)\geq\frac 23 u_x(t,0)^2-C_{\alpha,\beta}(\ln N)^2,\ \forall\ t\in[0,\widetilde{T}).
\end{align}
Since $q>1$, for $N>0$ large enough, we have
\begin{align*}
u_{0x}(0)\geq C(\ln N)^{-1}N^{\frac{q-1}{q+1}}\gg\frac{\sqrt{C_{\alpha,\beta}}\ln N}{\sqrt{2/3}}>0.
\end{align*}
Denote $C_0\triangleq\frac{\sqrt{\frac 23}u_{0x}(0)-\sqrt{C_{\alpha,\beta}}\ln N}{\sqrt{\frac 23}u_{0x}(0)+\sqrt{C_{\alpha,\beta}}\ln N}\in(0,1)$, then solving \eqref{ill2ineq} gives
\begin{align*}
u_x(t,0)\geq\dfrac{\sqrt{C_{\alpha,\beta}}\ln N\left(1+C_0\exp\left(2\sqrt{\frac 23 C_{\alpha,\beta}}\ln N\cdot t\right)\right)}{\sqrt{\frac 23}\left(1-C_0\exp\left(2\sqrt{\frac 23 C_{\alpha,\beta}}\ln N\cdot t\right)\right)},
\end{align*}
which implies that there exists
\begin{align*}
0<T_0\leq-\ln C_0\cdot\dfrac{1}{2\sqrt{\frac 23 C_{\alpha,\beta}}\ln N}<\widetilde{T} 
\end{align*}
for $N>0$ large enough such that $u$ blows up in finite time $T_0$. Therefore, according to Proposition \ref{blow up criterion}, we have
\begin{align*}
\|u\|_{L^\infty\left([0,\widetilde{T});B^{3/2}_{2,q}\right)}\geq C\|u\|_{L^\infty\left([0,\widetilde{T});B^{1}_{\infty,\infty}\right)}=+\infty,
\end{align*}
which contradicts to the hypothesis \eqref{opposite2}. Hence, we get $\|u\|_{L^\infty\left(\left[0,\widetilde{T}\right);B^{3/2}_{2,q}\right)}\geq\ln N$ and complete the proof.
\end{proof}
\smallskip
\noindent\textbf{Acknowledgments.} This work was supported by the National Natural Science Foundation of China (No.12171493).


\phantomsection
\addcontentsline{toc}{section}{\refname}
\bibliographystyle{abbrv} 
\bibliography{Feneref}
	
\end{document}